\numberwithin{equation}{section}
\numberwithin{figure}{section}
\newtheorem{thm}{Theorem}[section]
\newtheorem{cor}[thm]{Corollary}
\newtheorem{lem}[thm]{Lemma}
\newtheorem{prop}[thm]{Proposition}
\newtheorem{defn}[thm]{Definition}
\newtheorem{thmx}{Theorem}
\newcommand{\comment}[1]{}
\newcommand{\Sing}{\mathrm{Sing}}
\newcommand{\Reg}{\mathrm{Reg}}
\newcommand{\ZZ}{\mathbb{Z}}
\newcommand{\RR}{\mathbb{R}}
\newcommand{\ph}{\varphi}
\newcommand{\PPP}{\mathcal{P}}
\newcommand{\SSS}{\mathcal{S}}
\newcommand{\GGG}{\mathcal{G}}
\newcommand{\RRR}{\mathcal{R}}
\newcommand{\CCC}{\mathcal{C}}
\newcommand{\AAA}{\mathcal{A}}
\newcommand{\MMM}{\mathcal{M}}
\newcommand{\DDD}{\mathcal{D}}
\newcommand{\VVV}{\mathcal{V}}
\newcommand{\sing}{\mathrm{Sing}}
\newcommand{\eps}{\varepsilon}
\newcommand{\vg}{\varphi^u}
\newcommand{\FFF}{\mathcal{F}}
\newcommand{\UUU}{\mathcal{U}}
\DeclareMathOperator{\NE}{NE}
\newcommand{\x}{\times}
\newcommand{\R}{\RR}
\newcommand{\N}{\mathbb{N}}
\newcommand{\s}[1]{\mathscr{#1}}
\providecommand{\abs}[1]{\left | #1 \right |}
\providecommand{\norm}[1]{\lVert #1 \rVert}
\newcommand{\es}{\emptyset}
\newcommand{\tPPP}{\tilde {\mathcal{P}}}
\newcommand{\tSSS}{\tilde {\mathcal{S}}}
\newcommand{\tGGG}{\tilde{\mathcal{G}}}
\newcommand{\mukbm}{\mu_{\mathrm{KBM}}}
\begin{document}
\title[Mixing properties of rank 1 geodesic flows] {Equilibrium states for self-products of flows and the mixing properties of rank 1 geodesic flows} 

\author{Benjamin Call and Daniel~J.~Thompson}
\address{B. Call, Department of Mathematics, The Ohio State University, Columbus, OH 43210, \emph{E-mail address:} \tt{call.119@buckeyemail.osu.edu}}
\address{D.~J.~Thompson, Department of Mathematics, The Ohio State University, Columbus, OH 43210, \emph{E-mail address:} \tt{thompson@math.osu.edu}}

\thanks{This work is partially supported by NSF grants DMS-$1461163$ and DMS-$1954463$.}
\subjclass[2010]{37D35, 37D40, 37A25, 37C40, 37D25}
\date{\today}
\keywords{Equilibrium states, geodesic flow, Kolmogorov property}
\commby{}

\begin{abstract}
Equilibrium states for geodesic flows over closed rank 1 manifolds were studied recently in \cite{BCFT}. For sufficiently regular potentials, it was shown that if the singular set does not carry full pressure then the equilibrium state is unique. The main result of this paper is that these equilibrium states have the Kolmogorov property. In particular, these measures are mixing of all orders and have positive entropy. For the Bowen-Margulis measure, we go further and obtain the Bernoulli property from the Kolmogorov property using classic arguments from Ornstein theory. Our argument for the Kolmogorov property is based on an idea due to Ledrappier. We prove uniqueness of equilibrium states on the product of the system with itself. To carry this out, we develop techniques for uniqueness of equilibrium states which apply in the presence of the 2-dimensional center direction which appears for a product of flows. This is a key technical challenge of this paper.
\end{abstract}
\maketitle
\setcounter{tocdepth}{1}
\section{Introduction}
Let $M= (M^n,g)$ be a closed connected $C^\infty$ Riemannian manifold with non-positive sectional curvature and dimension $n$, and let  $(g_t)_{t\in\mathbb{R}}$ denote the geodesic flow on the unit tangent bundle $T^1M$. The theory of equilibrium states for this setting was developed recently in \cite{BCFT}. Mixing properties are a central topic in ergodic theory. The Bernoulli property is the ultimate mixing property from the measure-theoretic point of view, and the Kolmogorov property is the next strongest mixing property of wide interest. Our main focus is to prove the Kolmogorov property for the class of equilibrium states considered in \cite{BCFT}.  We also establish the Bernoulli property for the measure of maximal entropy.  
Studying the Kolmogorov and Bernoulli properties, and conditions under which $K$ implies Bernoulli, is an active area in smooth ergodic theory, with recent references including \cite{LLS, PTV,  KHV, gP19}.

We set up some preliminaries in order to state our results. The \emph{rank} of a vector $v\in T^1M$ is the dimension of the space of parallel Jacobi vector fields for the geodesic through $v$. The rank is at least $1$ because there is always a parallel Jacobi field corresponding to the flow direction. The \emph{regular set}, denoted $\Reg$, is the set of $v\in T^1M$ with rank $1$. The \emph{singular set}, denoted $\mathrm{Sing}$, is the set of vectors whose rank is larger than 1. We say that  the manifold $M$ is rank $1$ if $\Reg \neq \emptyset$. This is the typical situation in non-positive curvature: if $M$ is irreducible and every $v$ is higher rank, then $M$ is locally symmetric by the rank rigidity theorem \cite{wB95, wB85, BS85}. We assume that $M$ has rank $1$.

We consider equilibrium states for  H\"older continuous potentials or scalar multiples of the \emph{geometric potential} $\vg$. 
We recall that the unstable bundle $v \to E^u_v$ is a continuous invariant subbundle of $TT^1M$,  and that the potential $\vg(v)$ measures infinitesimal expansion in $E^u_v$.  The family of potentials $q\vg$, where $q\in \RR$, are of particular interest in the theory.

For a continuous potential $\ph: T^1M \to \RR$, we let $P(\ph)$ denote the topological pressure with respect to the geodesic flow. We let $P(\Sing, \ph)$ denote the topological pressure of the potential $\ph|_{\Sing}$ with respect to the geodesic flow restricted to the singular set (setting $P(\Sing, \ph) = - \infty$ if $\Sing = \emptyset$, in which case the flow is Anosov).  It was proved in \cite{BCFT} that  if $\varphi\colon T^1M\to \RR$ is $\varphi=q\vg$ or H\"older continuous, and if the pressure gap  $P(\sing, \varphi)<P(\varphi)$ holds, then there is a unique equilibrium state and it is fully supported.  In this paper, we go further in describing the properties of the unique equilibrium states thus obtained, and we prove the following. 
\begin{thmx} \label{thmA}
Let $(g_t)$ be the geodesic flow over a closed rank 1 manifold $M$ and let $\varphi\colon T^1M\to \RR$ be $\varphi=q\vg$ or be H\"older continuous. If $P(\sing, \varphi)<P(\varphi)$, then the unique equilibrium state $\mu_\varphi$ has the Kolmogorov property.
\end{thmx}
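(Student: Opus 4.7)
The overall strategy, indicated in the abstract and going back to an idea of Ledrappier, is to deduce the Kolmogorov property of $\mu_\varphi$ from uniqueness of the equilibrium state for the product flow. Write $\Phi \colon T^1M \times T^1M \to \RR$ for the product potential $\Phi(v,w) := \varphi(v) + \varphi(w)$, which is $(g_t \times g_t)$-invariant. A marginal and subadditivity computation yields $P(\Phi) = 2 P(\varphi)$, so the product measure $\mu_\varphi \times \mu_\varphi$ realises the pressure and is an equilibrium state for $\Phi$. The key implication we want is: if $\mu_\varphi \times \mu_\varphi$ is the \emph{unique} equilibrium state for $\Phi$, then $\mu_\varphi$ has the K-property. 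This is a standard consequence of the Pinsker decomposition: a failure of K would produce a nontrivial measurable factor of $(T^1M, g_t, \mu_\varphi)$ and, through relatively independent joining over that factor, a self-joining of $\mu_\varphi$ on $T^1M \times T^1M$ distinct from the product but with the same marginals and entropy $2 h(\mu_\varphi)$, hence a second equilibrium state for $\Phi$, contradicting uniqueness.

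The substantive work is therefore to prove uniqueness of the equilibrium state for $(T^1M \times T^1M, g_t \times g_t, \Phi)$. The natural plan is to adapt the Climenhaga--Thompson decomposition framework used in BCFT to the product setting. The pressure gap hypothesis transfers cleanly: the analog of the singular set in the product is the set
\[
\tilde S := (\Sing \times T^1M) \cup (T^1M \times \Sing),
\]
and the product structure together with the original gap give $P(\tilde S, \Phi) = P(\Sing, \varphi) + P(\varphi) < 2 P(\varphi) = P(\Phi)$, so $\tilde S$ does not carry full pressure. One then introduces, in parallel with BCFT, a collection $\tGGG$ of ``good'' product orbit segments $(v,w,t)$ in which each coordinate spends a definite proportion of $[0,t]$ in the complement of a fixed neighborhood of $\Sing$, with complementary prefixes and suffixes on which the requisite pressure bound holds. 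The goal is to verify, on $\tGGG$, the non-uniform specification and Bowen property for $\Phi$ that drive the abstract uniqueness theorem.

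The central obstacle, and the main technical contribution advertised in the abstract, is that the product flow has a \emph{two-dimensional} center direction, spanned by $(\partial_t, 0)$ and $(0, \partial_t)$. Standard flow specification is built for a one-dimensional center: orbit segments are concatenated through local product structure using a single time-shift parameter, and Bowen balls thicken only along one flow direction. On $T^1M \times T^1M$, however, the two coordinates may be shifted independently, so Bowen balls are intrinsically two-parameter and every specification or expansivity argument acquires an extra time coordinate. I expect the bulk of the paper to develop a refined version of the Climenhaga--Thompson framework that accommodates a multi-dimensional center: gluing with independent shift parameters in each factor, an adapted expansivity notion that rules out only genuinely two-parameter shadowing, and Bowen-type estimates for $\Phi$ on $\tGGG$ proved coordinate-by-coordinate from the BCFT bounds. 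Installing this two-parameter machinery, and confirming that the pressure gap and Bowen regularity from BCFT propagate through, is the hard step; once it is in place, Theorem~A follows by combining uniqueness on the product with the Ledrappier argument above.
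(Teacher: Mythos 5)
Your reduction of Theorem \ref{thmA} to uniqueness of the equilibrium state for $(T^1M\x T^1M,(g_t\x g_t),\Phi)$ is correct, and your justification --- the relatively independent self-joining of $\mu_\varphi$ over its Pinsker factor has entropy $2h_{\mu_\varphi}(g_1)$ and the same $\Phi$-integral as $\mu_\varphi\x\mu_\varphi$, so failure of $K$ would contradict uniqueness --- is a legitimate alternative to the paper's route, which instead cites Ledrappier's criterion (Theorem \ref{Ledr}, transferred to flows in Proposition \ref{Kprop}); your version still needs the Pinsker formula $h(T,P\mid\Pi)=h(T,P)$, flow-invariance of the Pinsker algebra, and the flow versus time-one bookkeeping, but these are standard. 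Likewise $P(\Phi)=2P(\varphi)$, the fact that $\mu_\varphi\x\mu_\varphi$ is an equilibrium state (Lemma \ref{ProductES}), and the pressure bound for $(\Sing\x T^1M)\cup(T^1M\x\Sing)$ all match the paper.

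The genuine gap is that the substantive half of the theorem --- uniqueness for the product flow --- is not proved: you describe what you expect the needed machinery to look like and stop, and what you describe is not what makes the proof work. Concretely, two things are missing. First, an actual decomposition of the product space of orbit segments: your good collection (each coordinate spends a definite fraction of time away from $\Sing$) is not shown to decompose every orbit segment into prefix, core and suffix, and the paper stresses that decompositions do not in general lift to products; the mechanism that works is the $\lambda$-decomposition with $\tilde\lambda(x,y)=\lambda(x)\lambda(y)$, for which $\tGGG^M\subset\GGG^M\x\GGG^M$ (Lemma \ref{subsetprod}), so specification and the Bowen property lift coordinatewise --- and the gluing is along the diagonal with a common transition time, which works precisely because the BCFT specification allows arbitrary gaps $\ge\tau$, not the ``independent shift parameters in each factor'' you anticipate --- while the bad-set pressure is controlled through measures with $\int\tilde\lambda\,d\nu=0$, which needs a recurrence argument since $\lambda$ also vanishes at some regular vectors. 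Second, and more seriously, even with such a decomposition Theorem \ref{CT} cannot be invoked: $P_{\mathrm{exp}}^{\perp}(\Phi)<P(\Phi)$ fails identically for a product of flows, and the remedy is not merely an ``adapted expansivity notion.'' With product expansivity, adapted partitions approximate only sets invariant under the full $\RR^2$-action $(g_{r_1}\x g_{r_2})$, not sets invariant under the diagonal flow alone, so the Climenhaga--Thompson ergodicity argument for the equilibrium state breaks down; the paper substitutes a new step --- a uniform lower bound $\mu(g_{[-3s,3s]}A\cap g_t\,g_{[-3s,3s]}B)>0$ for all large $t$, proved from specification, adapted partitions and the Gibbs property, which yields weak mixing of $\mu_\varphi$ by a spectral argument and hence ergodicity of $\mu_\varphi\x\mu_\varphi$ --- and separately rules out equilibrium states singular to $\mu_\varphi\x\mu_\varphi$ using the Gibbs property lifted to $\tGGG^M$ together with product expansivity of every equilibrium state for $\Phi$ (itself a consequence of the pressure gap). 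Without these ingredients the uniqueness claim, and with it Theorem \ref{thmA}, is not established.
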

In particular,  $\mu_\varphi$ is mixing of all orders, has countable Lebesgue spectrum, and has positive entropy.  We remark that if the pressure gap $P(\sing, \varphi)<P(\varphi)$ fails, then there must exist equilibrium states supported on $\Sing$. Therefore, we can reformulate our result as saying that if $\varphi\colon T^1M\to \RR$ is $\varphi=q\vg$ or H\"older continuous, then a fully supported unique equilibrium state for $\ph$ must have the $K$-property.

When $M$ is a surface, it is already known that the equilibrium state is Bernoulli, and thus $K$, by applying Lima-Ledrappier-Sarig \cite{BCFT, LLS}. Their approach relies on the countable state symbolic dynamics for $3$-dimensional flows established by Lima and Sarig \cite{LS}. For higher dimensional flows, the $K$-property (and even mixing in the case that $\ph$ is not constant) is a new result when $\Sing \neq \emptyset$. We denote the measure of maximal entropy $\mukbm$ after Knieper, Bowen, and Margulis. Babillot proved mixing for $\mukbm$ using product structure of the measure provided by Knieper's construction \cite{mB02}. To the best of our knowledge, stronger mixing properties for $\mukbm$ have not previously been described in the literature when $n \geq 3$.

Our argument for the $K$-property is to follow a remarkable strategy of Ledrappier \cite[Proposition 1.4]{L}, which gives a criterion for the $K$-property in terms of thermodynamic formalism. Consider an asymptotically $h$-expansive topological dynamical system $(X, f)$ and a continuous potential $\varphi$ on $X$, and define a potential $\Phi$ on $X\x X$ by $\Phi(x,y) =\ph(x)+ \ph(y)$. Ledrappier showed that if the product system $(X \times X, f \times f)$ has a unique equilibrium state for $\Phi$, then the equilibrium state for $\ph$ on $(X, f)$ has the $K$-property. We apply a continuous-time analogue of Ledrappier's result to the geodesic flow on a closed rank $1$ manifold. This reduces our analysis to the question of uniqueness of equilibrium states for the system given by the product of a geodesic flow with itself. 

We extend the machinery for uniqueness of equilibrium states developed by Climenhaga and the second named author in \cite{CT} to the class of products of flows. A key idea in \cite{CT} is to find a \emph{decomposition} of the space of orbit segments. This means that any finite-length orbit segment is assigned a `good' core by removing a `bad' segment from the start and from the end. We require that good orbit segments have the specification property and the Bowen property, while the collection of bad orbit segments carries less pressure than the whole system. Uniqueness of equilibrium states for rank 1 geodesic flow was established in \cite{BCFT} by exhibiting this kind of decomposition. We would like to find a decomposition for the product flow, but in general decompositions do not behave well under products. If a collection of orbit segments $\GGG$ has good properties, then we can expect that $\GGG \x \GGG$ does too. However, we need $\GGG \x \GGG$ to arise in a decomposition for $(X \x X, \FFF \x \FFF)$. In general, this does not look at all promising. The issue can be seen clearly if one considers the decomposition for an $S$-gap shift given in \cite{CT12}.

A main idea in our analysis is to formalize the notion of $\lambda$-decompositions for the space of orbit segments $X \x [0, \infty)$. This framework is motivated by, and generalizes, the decomposition for geodesic flow in non-positive curvature which was used in \cite{BCFT}. Unlike a general decomposition, $\lambda$-decompositions induce a natural decomposition on the self-product system. We use this to show that the product system $(T^1M \x T^1M, (g_t \x g_t))$ has a decomposition which satisfies the pressure gap using ideas which extend those in \cite{BCFT}. 
Other examples which are included in our definition of $\lambda$-decompositions include those used in \cite{CFT, CFT2} to study equilibrium states for DA systems, and the decompositions used to study geodesic flow on surfaces with no focal points in \cite{CKP}. 

The other key idea required to apply the machinery of \cite{CT} is to show that the pressure of obstructions to expansivity is smaller than that of the whole space. For a product of flows $(X \x X, \FFF \x \FFF)$, this is never the case. Consider the bi-infinite Bowen balls $\Gamma_{\eps}(x_1 ,x_2)$ for the product flow, defined at \eqref{Gammaeps}. The best expansivity property one can expect is that $\Gamma_\eps(x_1,x_2)$ is contained in the $2$-dimensional set $\{ (f_{r_1}x_1, f_{r_2}x_2) \mid r_1, r_2 \in [-s,s]\}$ for some $s>0$. 
Thus, even if the flow $(X, \FFF)$ is expansive, every non-isolated point in the product flow is non-expansive. We address this by  controlling the `product non-expansive set':
 $$\operatorname{NE^\x}(\eps) := \{(x_1,x_2)\in X\x X\mid \Gamma_\eps(x_1,x_2)\not\subset f_{[-s,s]}(x_1)\x f_{[-s,s]}(x_2)\text{ for any }s > 0\}.$$
We say a $(\FFF\x\FFF)$-invariant measure $\nu$ is \emph{product expansive} if $\nu(\NE^\x(\eps))=0$ for small $\eps>0$. 

We outline some strategy for the uniqueness proof. Our presentation naturally focuses on the case  $\Sing \neq \emptyset$. Of course, our approach also applies in the Anosov case, $\Sing = \emptyset$, which is formally covered by this paper modulo some different (simpler) details to replace the pressure estimates in \S \ref{pressureest}. Let $\mu$ be the unique equilibrium state for $\ph$ for a flow $(X, \FFF)$ provided by an application of the machinery of \cite{CT}. We  show that the measure $\mu\x\mu$ is an equilibrium state for $\Phi$ and that it is product expansive. Furthermore, the weak Gibbs property for the equilibrium state $\mu$ lifts to $\mu \x \mu$. Our task is to show that these ingredients are enough to run the proof that there are no equilibrium states mutually singular to $\mu \x \mu$, and then to show that $\mu \x \mu$ is ergodic.

We encounter technical difficulties related to approximating sets with elements of `adapted partitions' for $(t, \eps)$-separated sets. For a product expansive measure, we show that partition elements can approximate sets invariant under the $\RR^2$-action (i.e. for $f_s \x f_t$ for every $s,t$). However, we are not able to approximate sets invariant under only the diagonal action $(f_s \x f_s)$. This is what would be needed to adapt the ergodicity proof given in \cite{CT}, so we need a different approach:  we prove weak mixing for $\mu$ using a spectral argument, and this gives ergodicity of $\mu \x \mu$. The key ingredient is a `light mixing' property for positive measure sets that have been flowed out by a uniform constant $\beta>0$.

The arguments given in this paper are not specific to geodesic flow. 
Our central argument gives criteria for uniqueness of equilibrium states designed to be applicable for systems which are self-products of flows. Theorem \ref{general} gives the general abstract statement provided by the arguments in this paper, and we expect that it will be broadly applicable beyond the current setting. While our focus is on systems with non-uniform structure, we note that the uniqueness result is new even in the case of a product of uniformly hyperbolic flows, since the expansivity issues we have to deal with are already present in that setting. In \S \ref{s.general}, we discuss remaining room for improvement in the hypotheses of Theorem \ref{general}. These generalizations are not pursued here since it would distract from the main ideas necessary for our analysis. We note that some further applications of our approach in both continuous and discrete-time have been explored in the preprint \cite{bC20} by the first-named author, which is a sequel to this paper.

  We now discuss the Bernoulli property for the measure of maximal entropy $\mukbm$. We use the product structure of the measure provided by Knieper's construction (which is not currently known to extend to equilibrium states), and follow the classic strategy of Ornstein theory to move up the mixing hierarchy. This strategy was carried out for the geodesic flow on constant negative curvature surfaces in \cite{OW73}, and in \cite{yP77, mR74, LLS, CH96, OW98, PTV}. In particular, Pesin showed that the Liouville measure restricted to the regular set is Bernoulli in the current setting. We rigorously extract the statement that `$K$ implies Bernoulli' for $\mukbm$ from Chernov and Haskell's paper \cite{CH96}, whose results are stated for a suspension flow over a non-uniformly hyperbolic map with a smooth measure.  We conclude the following.
\begin{thmx} \label{Bernoulli}
Let $(g_t)$ be the geodesic flow over a closed rank 1 manifold $M$. The unique measure of maximal entropy $\mukbm$ is Bernoulli.
\end{thmx}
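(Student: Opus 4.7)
The plan is to deduce the Bernoulli property by combining the Kolmogorov property from Theorem A (applied with $\ph\equiv 0$) with the classical Ornstein-theoretic machinery for upgrading $K$ to Bernoulli, in the form developed by Chernov-Haskell \cite{CH96} for suspension flows over non-uniformly hyperbolic maps. The two substantive inputs are: (i) $\mukbm$ is a $K$-flow; (ii) Knieper's construction represents $\mukbm$ locally as a product of conditionals along local strong stable, strong unstable, and flow directions. The pressure gap for $\ph\equiv 0$ forces $\mukbm(\Sing)=0$, so I can work inside the regular set where Pesin theory applies with non-zero Lyapunov exponents transverse to the flow direction.

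First I would choose a Pesin block in $\Reg$ and build a measurable Poincar\'e section $\Sigma$ with roof function $\tau$ bounded away from $0$ and $\infty$, so that $(g_t,\mukbm)$ is measurably isomorphic to the suspension of the Poincar\'e return map $(T,\hat\mu)$ under $\tau$. The return map $T$ is non-uniformly hyperbolic, and Knieper's product description transfers to give $\hat\mu$ a local product structure on $\Sigma$ with absolutely continuous stable/unstable holonomies (with respect to the conditionals, which is what Ornstein theory actually needs, rather than with respect to Lebesgue).

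Next I would invoke the Chernov-Haskell framework to conclude that $(T,\hat\mu)$ is Bernoulli and that the suspension is Bernoulli. The Bernoulli property for $T$ follows from the Ornstein Very Weak Bernoulli criterion: the $K$-property of the flow descends to $T$, and VWB is verified by comparing $(n,\eps)$-names along local stable manifolds using the absolutely continuous stable holonomies and the local product structure of $\hat\mu$, in the standard manner of \cite{OW73,yP77,CH96}. The passage from $T$ Bernoulli to the suspension Bernoulli requires the roof function $\tau$ to not be cohomologous to a function valued in a discrete subgroup of $\RR$, and this is forced by mixing of $\mukbm$, which is already known from \cite{mB02} (or directly from Theorem A), since a lattice-valued cocycle would produce a non-trivial rotation factor and obstruct mixing.

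The main obstacle will be the rigorous extraction of the Chernov-Haskell conclusion in our setting, since their paper is phrased for suspensions over non-uniformly hyperbolic maps preserving a \emph{smooth} measure, whereas $\mukbm$ is only known to enjoy local product structure on $\Reg$ via Knieper and is not absolutely continuous. The substitutions that need to be checked are: (a) Pesin's theory of stable/unstable manifolds and their absolute continuity holds on $\Reg$, so the holonomies are absolutely continuous with respect to the \emph{conditional} measures of $\mukbm$, which is exactly what the Ornstein arguments consume; (b) the Poincar\'e section can be constructed to keep positive distance from $\Sing$ on each Pesin block, so that hyperbolicity constants are uniform on each block and the roof function is controlled; (c) the VWB estimates in \cite{CH96} go through with Lebesgue replaced by the conditionals. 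This verification, guided by the parallel treatments in \cite{yP77,LLS,PTV}, is where the technical labor of the proof will lie.
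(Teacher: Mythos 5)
Your overall strategy (the $K$-property from Theorem A, plus the Chernov--Haskell machinery run with Knieper's product structure in place of a smooth measure, plus Pesin rectangles inside $\Reg$) is the same as the paper's, and your diagnosis of the main technical issue — replacing smoothness by product structure with respect to the conditionals — is exactly the right one. However, your route through a Poincar\'e section contains a genuine gap at its final step. You propose to prove that the section map $(T,\hat\mu)$ is Bernoulli and then pass to the suspension, claiming that this passage only requires the roof $\tau$ not to be cohomologous to a lattice-valued function, which you extract from mixing. There is no such theorem: non-latticeness of the roof rules out rotation factors (and is the relevant condition for mixing), but it does not upgrade Bernoullicity of the base to Bernoullicity of the flow. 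Bernoullicity of a flow means every time-$t$ map is Bernoulli, and there exist mixing $K$-flows that are not Bernoulli; nothing in Ornstein theory or in \cite{CH96} lets you conclude the flow is Bernoulli from ``base Bernoulli $+$ non-lattice roof.'' Indeed, the whole point of \cite{CH96} (and of \cite{OW73, mR74, yP77}) is that the Very Weak Bernoulli estimates must be carried out for partitions of the flow's phase space itself, with the flow direction handled inside the weak-stable part of the rectangles, not deduced from a cross-section. A secondary unjustified step is your assertion that the $K$-property of the flow ``descends to $T$'': a cross-section map is not a factor of the flow, so this needs an argument; and constructing a section with roof bounded away from $0$ and $\infty$ while transporting Knieper's product structure to it is extra work that buys you nothing.

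The paper avoids the section entirely. It extracts from \cite{CH96} the statement that for a $C^2$ flow with a hyperbolic invariant measure, $K$ plus the existence, for every $\eps>0$, of an $\eps$-regular covering by rectangles (built from unstable and \emph{weak}-stable manifolds, with the measure $\eps$-close to the product $\mu^u_z\x\mu^{0s}_z$ of its conditionals) implies that finite partitions with piecewise smooth, measure-controlled boundaries are Very Weak Bernoulli for the flow, whence Bernoulli via \cite[Lemma 4.1]{OW98}. The only thing left to verify is the existence of such coverings for $\mukbm$: conditions (1)--(2) come from Pesin theory on $\Reg$ (where $\mukbm$ has full measure), and the approximate product condition (3) is checked directly from Knieper's formula $d\mukbm = f(\xi,\eta)\,d\nu_p\,d\nu_p\,dt$, using continuity of the density $f$ on small rectangles (the flat strip theorem guaranteeing a unique regular geodesic for a.e.\ $(\xi,\eta)$). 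If you redirect your argument to verify these covering conditions for the flow itself — which your Knieper and Pesin ingredients already essentially do — rather than detouring through a section map, the proof closes.
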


We remark that Theorem \ref{Bernoulli} may be anticipated by some experts in this area in light of Babillot's mixing result, because classical arguments using product structure are likely to show that  `mixing implies $K$' for $\mukbm$. With this in hand, Babillot's mixing result would bootstrap all the way to $K$, and then to Bernoulli by the argument presented here. However, neither the proof details nor a precise statement of the needed hypotheses for this approach to the $K$-property have been written for measures with product structure, particularly for flows.  We emphasize that our approach to the $K$-property does not use product structure of the measure.
Furthermore, our proof gives an alternative approach to Babillot's mixing result. We discuss the status of approaches to the $K$-property based  purely on product structure in more detail in \S \ref{pp}.

We note that after the preprint version of this paper appeared, Theorem \ref{Bernoulli} was generalized to equilibrium measures in a recent preprint by Araujo, Lima, and Poletti \cite{ALP}, by extending the symbolic dynamics construction of \cite{LS} to the higher-dimensional setting. This provides product structure at the symbolic level for all the equilibrium measures considered in Theorem \ref{thmA}, and this can be used to improve the $K$-property to Bernoulli.

The paper is structured as follows. In \S \ref{background}, we give background. In \S \ref{products}, we give some general results about product systems. In \S \ref{prodalmost}, we describe properties of product expansive measures.  In \S \ref{pressureest}, we give our pressure estimate for the product of the geodesic flow. In \S \ref{productproof}, we prove that the product system has a unique equilibrium state, completing our proof of the $K$-property. In \S \ref{s.bernoulli}, we describe how to obtain the Bernoulli property for $\mukbm$.
\vspace{10pt}

\section{Background} \label{background}

\subsection{Setting}
We write $(X,d)$ for a compact metric space and $\FFF = (f_t)_{t\in\R}$ a continuous flow on $X$. We write $\MMM(X, \FFF)$ for the space of $\FFF$-invariant Borel probability measures on $X$.  We often consider the metric $d_t(x,y) = \max \{ d(f_sx, f_sy) : s \in[0,t]\}$, and consider metric balls in the $d_t$ metrics, that is the \emph{Bowen balls}
\[
B_t(x, \eps) = \{y : d_t(x, y) < \eps\}.
\]
On occasion, we work with \emph{two-sided Bowen balls}, which we define as
	$$B_{[-t,t]}(x,\eps) = \{y\mid d(f_sx,f_sy) < \eps, s\in [-t,t]\}.$$
We will also consider the product space $X\x X$, which we equip with the metric
	$$\tilde{d}((x,y),(w,z)) = \max\{d(x,w),d(y,z)\}.$$
	
In this metric, it is easy to check that
$B_t((x,y),\eps) = B_t(x,\eps)\x B_t(y,\eps).$ As a notation convention, we write $(f_t)$ when we are considering a general continuous flow, and $(g_t)$ when we are considering geodesic flow.

\subsection{Geodesic flow in non-positive curvature} \label{geoflow}
We collect the necessary definitions to state our results. We refer to \cite{BCFT} for more details, and to \cite{wB95,pE99} for general reference.

Let $M$ be a compact, connected, boundaryless smooth manifold with a smooth Riemannian metric $g$, with non-positive sectional curvatures at every point. For each $v$ in the unit tangent bundle $T^1M$ there is a unique constant speed
geodesic denoted $\gamma_v$ such that $\dot{\gamma}_v(0)=v$.  The \emph{geodesic flow} $(g_t)_{t\in\mathbb{R}}$ acts on $T^1M$  by $g_t(v)=(\dot\gamma_v)(t)$.  We equip $T^1M$ with the Manning-Knieper distance function \cite{aM79, knieper98}:
\begin{equation}\label{eqn:dK}
d(v,w) = \max \{d_M(\gamma_v(t), \gamma_w(t)) \mid t \in [0,1] \},
\end{equation}
where $d_M$ is the distance function on $M$ induced by the Riemannian metric.  

Given $v\in T^1M$, stable and unstable horospheres $H^s_v$ and $H^u_v$ can be defined using a standard geometric construction in the universal cover. For $H^s_v$, we consider the set of points in the universal cover $\tilde M$ at distance $r$ from $g_rv$, that is
\[
S^r(v, +) = \{ x \in \tilde M : d_{\tilde M}(x, g_rv) = r\},
\]
and we take the limit of $S^r(v, +)$ as $ r \to \infty$. This defines a hypersurface which contains the point $\pi v$. The stable horosphere $H^s_v$ is the projection to $M$ (from $\tilde M$) of this hypersurface \cite[Proposition 2.6]{pE73}. The stable manifold $W^s_v$ is the normal unit vector field to $H^s_v$ on the same side as $v$. For $H^u_v$, we consider the set of points in $\tilde M$ at distance $r$ from $g_{-r}v$, that is
\[
S^r(v, -) = \{ x \in \tilde M : d_{\tilde M}(x, g_{-r}v) = r\},
\]
and we take the limit of $S^r(v, -)$ as $ r \to \infty$. The projection to $M$ of this hypersurface is the stable horosphere $H^u_v$. The unstable manifold $W^u_v$ is the normal unit vector field to $H^u_v$ on the same side as $v$. The horospheres are $C^2$ manifolds, and we can define  the stable and unstable subspaces $E^s_v, E^u_v \subset T_vT^1M$ to be the tangent spaces of $W^s_v, W^u_v$ respectively.  The bundles $E^s, E^u$, which are both globally defined in this way, are respectively called the stable and unstable bundles. The bundles $E^s, E^u$ are invariant, and depend continuously on $v$, see \cite{pE99, GW99}.  
We can define the \emph{geometric potential}, to be
\[
\vg(v)=-\lim_{t\to 0} \frac{1}{t}\log \det(dg_t|_{E^u_v}).
\]
The geometric potential is globally defined and continuous.

We define the \emph{singular set} $\Sing$ to be the set of $v$ such that $E^s_v$ and $E^u_v$ intersect non-trivially. The set $\Sing$ is closed and invariant. We define the \emph{regular set} $\Reg$ to be the complement of $\Sing$ in $T^1M$. An alternative construction of $E^s, E^u$ is given infinitesimally using stable and unstable Jacobi fields. These bundles can be shown to be integrable, and $W^s, W^u$ is characterized as the foliation obtained by integrating these bundles. With this approach, $\Sing$ is defined as the set of $v \in T^1M$ so that the geodesic determined by $v$ has a parallel orthogonal Jacobi field. This can be seen to be equivalent to the definition of $\Sing$ given above. The Jacobi field formalism is used extensively in \cite{BCFT}, and we refer there for full definitions.

We define a function $\lambda\colon T^1M\to [0,\infty)$ as follows.  Let $H^s, H^u$ be the stable and unstable horospheres for $v$. Let $\UUU^s_v \colon T_{\pi v} H^s \to T_{\pi v} H^s$ be the symmetric linear operator defined by $\UUU(v)=\nabla_vN$, where $N$ is the field of unit vectors normal to $H$ on the same side as $v$.  This determines the second fundamental form of the stable horosphere $H^s$. We define $\UUU^u_v \colon T_{\pi v} H^u \to T_{\pi v} H^u$ analogously. Then $\UUU_v^u$ and $\UUU_v^s$ depend continuously on $v$, $\UUU^u$ is positive semidefinite, $\UUU^s$ is negative semidefinite, and $\UUU^u_{-v}=-\UUU^s_v$.  
\begin{defn}
For $v \in T^1M$, let $\lambda^u(v)$ be the minimum eigenvalue of $\UUU^u_v$ and let $\lambda^s(v) = \lambda^u(-v)$. Let $\lambda(v) = \min ( \lambda^u(v), \lambda^s(v))$.
\end{defn}
The functions $\lambda^u$, $\lambda^s$, and $\lambda$ are continuous since the map $v\mapsto \UUU^{u,s}_v$ is continuous.
By positive (negative) semidefiniteness of $\UUU^{u,s}$, we have $\lambda^{u,s} \geq 0$.  When $M$ is a surface, the quantities $\lambda^{u,s}(v)$ are just the curvatures at $\pi v$ of the stable and unstable horocycles. 

If $v \in \Sing$, then $\lambda(v)=0$ due to the presence of a parallel orthogonal Jacobi field. The set $\{v \in \Reg: \lambda(v)=0\}$ may be non-empty, but it has zero measure for any invariant measure \cite[Corollary 3.6]{BCFT}. If $\lambda(v) \geq \eta >0$, then we have various uniform estimates at the point $v$, for example on the growth of Jacobi fields at $v$ \cite[Lemma 2.11]{BCFT} and the angle between $E^u_v$ and $E^s_v$ \cite[\S3.3]{BCFT}. Thus, the function $\lambda$ serves as a useful  `measure of hyperbolicity'.

\subsection{The $K$-property}
We give a brief survey of the Kolmogorov property. For a more extensive survey, we refer to Chapter 10.8 in \cite{CFSS}. 
The $K$-property is a mixing property, stronger than mixing of all orders and weaker than Bernoulli. The original definition of the $K$-property for a discrete-time system is as follows.

\begin{defn}\label{K-definition}
	Let $f : X\to X$ be an invertible measure-preserving transformation. Then we say that $f$ is 
\emph{Kolmogorov}, or that the system has the \emph{$K$-property}, if there is a sub-$\sigma$-algebra $\mathscr{K}$ of $\mathscr{B}$ which satisfies $f\mathscr{K}\supset\mathscr{K}$, $\bigvee_{i=0}^\infty f^{i}\mathscr{K} = \mathscr{B}$, and $\bigcap_{i=0}^\infty f^{-i}\mathscr{K} = \{\es, X\}.$
\end{defn}
A system $(X,f,\mu)$ has the $K$-property if and only if it has completely positive entropy, i.e. $h_\mu(f, \xi)>0$ for any partition $\xi \neq \{ \emptyset, X \}$ mod $0$ measure sets. This immediately implies that if $(X,f,\mu)$ has the $K$-property, then $h_\mu(f) > 0$.

There is another equivalent definition of the $K$-property, called $K$-mixing. We say $(X,\s{B},f,\mu)$ is \emph{$K$-mixing} if for any sets $A_0,A_1,\ldots, A_r\in \s{B}$ for $r \geq 0$, we have
	$$\lim\limits_{n\to\infty}\sup_{B\in \s{C}_n^\infty(A_1,\cdots, A_r)}\abs{\mu(A_0\cap B) - \mu(A_0)\mu(B)} = 0,$$
	where $\s{C}_n^\infty(A_1,\cdots, A_r)$ is the minimal $\sigma$-algebra generated by $f^kA_i$ for $1\leq i\leq r$ and $k\geq n$. A system $(X,f,\mu)$ has the $K$-property if and only if it is $K$-mixing.  As a corollary, we see that the $K$-property implies mixing of all orders. Thus, the $K$-property is interpreted as a strong mixing property.  The Bernoulli property, which is the strongest property in the hierarchy of mixing properties, implies the $K$-property \cite[Theorem 4.30]{Wa}. We now define the $K$-property for a flow.
\begin{defn}
A measure-preserving flow  $(X, \FFF, \mu)$ has the $K$-property if for every $t \neq 0$, the discrete-time invertible measure preserving system $(X, f_t, \mu)$ has the $K$-property.
\end{defn}
Rudolph proved in \cite{R} that this definition is equivalent to the natural continuous-time analogue of Definition \ref{K-definition}. It follows from work of Gurevi\v{c} \cite{Gur67} that a flow is $K$ in the sense above if we can check that a single time-$t$ map is $K$.  We give a short self-contained proof, since we will use this criterion in this paper.

\begin{prop}\label{TimeTK}
Let $\FFF=(f_t)$ be a continuous flow and $\mu$ be an $\FFF$-invariant measure. If there exists  $t\in\R$ such that $(X,f_t,\mu)$ is a $K$-system, then $(X,\FFF,\mu)$ is a $K$-flow.
\end{prop}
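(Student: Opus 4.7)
The plan is to show that the Pinsker $\sigma$-algebra $\Pi(f_s)$ is the same for every $s \neq 0$, so that the $K$-property, i.e.\ triviality of the Pinsker $\sigma$-algebra, transfers from $f_t$ to every $f_s$. The approach has three steps: (i) verify that $\Pi(f_t)$ is invariant under the whole flow; (ii) use Abramov's formula to deduce that $\Pi(f_t)$ has zero entropy under every $f_s$; (iii) conclude by maximality of the Pinsker algebra.

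For step (i), I would use that $f_s$ commutes with $f_t$. For any finite measurable partition $\xi$, commutation gives
\[
\bigvee_{k=0}^{n-1} f_t^{-k}(f_s \xi) = f_s\Bigl(\bigvee_{k=0}^{n-1} f_t^{-k} \xi\Bigr),
\]
and since $f_s$ is measure-preserving these joins have the same Shannon entropy. Dividing by $n$ and letting $n \to \infty$ gives $h_\mu(f_t, f_s\xi) = h_\mu(f_t, \xi)$, so $f_s$ carries the class $\{\xi : h_\mu(f_t, \xi) = 0\}$ into itself. Since $\Pi(f_t)$ is the $\sigma$-algebra generated by this class, $f_s \Pi(f_t) = \Pi(f_t)$ for every $s \in \R$.

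For step (ii), since $\Pi(f_t)$ is flow-invariant, the flow descends to a measure-preserving factor flow on $(X, \Pi(f_t), \mu)$. Abramov's formula applied to this factor flow yields $h_\mu(f_s|_{\Pi(f_t)}) = |s|\, h_\mu(f_1|_{\Pi(f_t)})$ for every $s$; since $h_\mu(f_t|_{\Pi(f_t)}) = 0$ by the definition of the Pinsker algebra, I conclude $h_\mu(f_s|_{\Pi(f_t)}) = 0$ for every $s$. For step (iii), since $\Pi(f_t)$ is $f_s$-invariant and has zero $f_s$-entropy, maximality of the Pinsker $\sigma$-algebra gives $\Pi(f_t) \subseteq \Pi(f_s)$; swapping the roles of $s$ and $t$ yields the reverse inclusion, so $\Pi(f_s) = \Pi(f_t)$ for all $s, t \neq 0$. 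Triviality of $\Pi(f_t)$ is therefore equivalent to triviality of $\Pi(f_s)$ for every $s \neq 0$, which is the statement that the flow is $K$.

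The step requiring the most care is the invocation of Abramov's theorem on the factor flow induced by the flow-invariant sub-$\sigma$-algebra $\Pi(f_t)$; this is a standard consequence of the classical Abramov formula, but the factor construction should be pointed to explicitly. A fallback strategy, should the Abramov invocation become unwieldy, is to handle rational ratios $s/t = p/q$ directly via the identity $f_s^q = f_t^p$ together with the elementary fact that $T$ is $K$ iff $T^n$ is $K$ for $n \neq 0$; however, irrational ratios require a separate argument under this approach (and no obvious continuity of the $K$-property in the time parameter is available), so the Pinsker-algebra route above is preferable.
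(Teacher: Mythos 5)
Your proposal is correct and follows essentially the same route as the paper's proof: flow-invariance of the Pinsker algebra via commutation of the time maps, Abramov's formula on the Pinsker factor flow, and maximality of the Pinsker algebra (the paper merely phrases it contrapositively, starting from a nontrivial Pinsker algebra for some time map). No gaps; the argument is fine as written.
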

\begin{proof}
We prove the contrapositive. Suppose that $(X,\FFF,\mu)$ is not a $K$-flow. Then there exists $t_0\in\R\setminus\{0\}$ such that $(X,f_{t_0},\mu)$ is not a $K$-system, and so has a non-trivial Pinsker algebra $\pi(f_{t_0})$. Now, for all $t\in\R$ and $\s{A}\subset \pi(f_{t_0})$, we have that
$$h_\mu(f_{t_0},f_t\s{A}) = h_\mu(f_{t_0},\s{A}) = 0$$
and so $f_t\s{A}\subset \pi(f_{t_0})$. Therefore, $\pi(f_{t_0})$ is $\FFF$-invariant. Consequently, considering the system $(X,\pi(f_{t_0}),\mu,\FFF)$, for all $t\neq 0$,
$$h_\mu(f_t|_{\pi(f_{t_0})}) =  \abs{\frac{t}{t_0}}h_\mu(f_{t_0}|_{\pi(f_{t_0})}) = 0.$$
Thus, $\pi(f_t)$ contains $\pi(f_{t_0})$ which is nontrivial, and so we have shown that $(X,f_t,\mu)$ is not a $K$-system. This completes the proof.
\end{proof}

From this definition, it is easy to see that the properties of mixing of all orders and positive entropy hold for $K$-flows as well.

\subsection{Ledrappier's criterion}
The major tool we use for proving the $K$-property is the following theorem from \cite{L}.

\begin{thm}[Ledrappier]\label{Ledr}
	Let $(X,f)$ be an asymptotically $h$-expansive system, and let $\ph$ be a continuous function on $X$. Let $(X\x X,f\x f)$ be the product of two copies of $(X,f)$ and $\Phi(x_1,x_2) = \ph(x_1) + \ph(x_2)$. If $\Phi$ has a unique equilibrium measure in $\MMM(X\x X,f\x f)$, then the unique equilibrium measure for $\ph$ in $\MMM(X,f)$ has the Kolmogorov property.
\end{thm}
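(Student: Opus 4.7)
The plan is to argue by contradiction via the relatively independent self-joining over the Pinsker $\sigma$-algebra. Asymptotic $h$-expansiveness gives upper semicontinuity of the entropy map on the compact set of invariant measures, so equilibrium states for $\ph$ and $\Phi$ exist (in particular the uniqueness hypothesis on $\Phi$ is not vacuous). The idea is that if the unique equilibrium state $\mu$ for $\ph$ had nontrivial Pinsker algebra, then from this algebra one could manufacture a second equilibrium state for $\Phi$ distinct from $\mu \x \mu$, contradicting uniqueness.

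First I would show that uniqueness of the equilibrium state for $\Phi$ implies uniqueness for $\ph$. Using the standard product identity $P(\Phi) = 2P(\ph)$, one verifies that whenever $\mu_i$ is an equilibrium state for $\ph$ the product $\mu_i \x \mu_i$ is an equilibrium state for $\Phi$; two distinct equilibrium states for $\ph$ would therefore give two distinct ones for $\Phi$. Denote by $\mu$ the unique equilibrium state for $\ph$, and let $\pi \subset \mathscr{B}$ be its Pinsker $\sigma$-algebra. The relatively independent self-joining of $\mu$ over $\pi$ is the probability measure $\nu$ on $X \x X$ defined on rectangles by
$$\nu(A \x B) := \int E_\mu(\mathbf{1}_A \mid \pi)\, E_\mu(\mathbf{1}_B \mid \pi)\, d\mu.$$
Invariance of $\pi$ under $f$ makes $\nu$ invariant under $f \x f$, its marginals are both $\mu$, and so $\int \Phi\, d\nu = 2\int \ph\, d\mu$.

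The technical heart of the argument is the identity $h_\nu(f \x f) = 2 h_\mu(f)$. Consider the factor map from $(X \x X, \nu)$ onto its factor corresponding to the product $\sigma$-algebra $\pi \otimes \pi$: this is the product of the Pinsker factor with itself and hence has zero entropy. Because $\nu$ was built to be relatively independent over this factor, the relative Abramov--Rokhlin formula yields
$$h_\nu(f \x f) = 0 + 2\,h_\mu(f \mid \pi) = 2 h_\mu(f),$$
using $h_\mu(f \mid \pi) = h_\mu(f)$ (since $\pi$ is Pinsker). Combined with the integral identity above, this shows $\nu$ attains $P(\Phi)=2P(\ph)$ and is an equilibrium state for $\Phi$.

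Finally, $\mu \x \mu$ is also an equilibrium state for $\Phi$ by the same entropy-and-integral count, so uniqueness forces $\nu = \mu \x \mu$. If $\pi$ were nontrivial we could pick $A \in \pi$ with $0 < \mu(A) < 1$; then $E_\mu(\mathbf{1}_A \mid \pi) = \mathbf{1}_A$ gives $\nu(A \x A) = \mu(A) > \mu(A)^2 = (\mu \x \mu)(A \x A)$, a contradiction. Therefore $\pi$ is $\mu$-trivial and $\mu$ has the $K$-property. The main obstacle I anticipate is making $h_\nu(f \x f) = 2h_\mu(f)$ rigorous: this relies on the relative Abramov--Rokhlin formula for self-joinings together with the defining zero-entropy property of the Pinsker factor, and is the step where the relatively independent joining construction really earns its keep.
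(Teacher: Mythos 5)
The paper does not actually prove Theorem \ref{Ledr}: it is quoted from Ledrappier \cite{L} (with \cite{Led78} invoked only to translate his ``weak expansiveness'' hypothesis into asymptotic $h$-expansiveness), and the paper's own work goes into the continuous-time reduction (Proposition \ref{Kprop}) and into verifying the uniqueness hypothesis for the product. Your argument is essentially a reconstruction of Ledrappier's original proof: form the relatively independent self-joining $\nu=\mu\otimes_\pi\mu$ over the Pinsker algebra, show it is an equilibrium state for $\Phi$, and conclude from uniqueness that $\nu=\mu\times\mu$, which forces $\pi$ to be trivial; the final computation $\nu(A\times A)=\mu(A)>\mu(A)^2$ for $A\in\pi$ is exactly right, as is the observation that uniqueness for $\Phi$ already yields uniqueness for $\ph$ via $P(\Phi)=2P(\ph)$. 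The step you correctly single out as the crux, $h_\nu(f\times f)=2h_\mu(f)$, is a true and standard fact of relative entropy theory: writing $\mathcal{B}_1,\mathcal{B}_2$ for the two coordinate algebras, one has $h_\nu(f\times f)=h_\nu(\mathcal{B}_1)+h_\nu(\mathcal{B}_2\mid\mathcal{B}_1)$, and conditional independence of $\mathcal{B}_2$ and $\mathcal{B}_1$ over $\pi$ gives $h_\nu(\mathcal{B}_2\mid\mathcal{B}_1)=h_\mu(f\mid\pi)=h_\mu(f)$, the last equality because the Pinsker factor has zero entropy; this needs finite entropy to avoid $\infty-\infty$, which is exactly what asymptotic $h$-expansiveness supplies (besides upper semicontinuity of the entropy map, hence existence of equilibrium states). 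Two small corrections of bookkeeping rather than substance: under $\nu$ the two copies of $\pi$ coincide almost surely, so the factor generated by $\pi\otimes\pi$ is the diagonal copy of the Pinsker factor rather than a genuine product (its entropy is still $0$, so your conclusion stands); and you should record that triviality of the Pinsker algebra is equivalent to the $K$-property (Rokhlin--Sinai), an equivalence the paper itself takes as the working definition via completely positive entropy. With those points made explicit, your proof is correct and is, in substance, the proof in \cite{L} that the paper chose to cite rather than reproduce.
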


In \cite{L}, the result is stated with a hypothesis called weak expansivity in place of asymptotic $h$-expansivity. However, in \cite{Led78} he demonstrates that this weak expansivity property is equivalent to the now standard definition of asymptotic $h$-expansivity. See also the book \cite{FH} for a contemporary account.

Ledrappier observed that in discrete-time, his theorem applies under Bowen's hypotheses of the specification property, expansivity, and the Bowen regularity property, since all of these properties lift to the product system. Note that the continuous-time analogue of Bowen's hypotheses \cite{eF77} do not lift to the product system since a product of expansive flows is not expansive. Thus, there is a new difficulty that must be overcome to apply this approach for flows, even in the uniform setting.  

We give a short proof that Ledrappier's result generalizes to flows. The involved part of our analysis will be to apply it using suitable weak non-uniform versions of Bowen's hypotheses. First, we give a useful lemma.

\begin{lem}\label{ProductES}
	Let $\mu$ be an equilibrium state for $(X,\FFF,\ph)$. Then $\mu\x\mu$ is an equilibrium state for $(X\x X,\FFF\x \FFF,\Phi)$.
\end{lem}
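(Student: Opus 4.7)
The plan is to verify the variational principle inequality for $\mu\x\mu$ by combining two standard facts: the entropy of a product measure equals the sum of entropies, and the entropy of a joining is bounded by the sum of the marginal entropies. These will give both halves of the identity $P(\Phi) = 2P(\ph)$ together with the fact that $\mu\x\mu$ realizes the supremum.

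First I would establish the upper bound $P(\Phi)\le 2P(\ph)$. Given any $(\FFF\x\FFF)$-invariant probability measure $\nu$ on $X\x X$, let $\nu_1,\nu_2$ denote its marginals on the two factors. These are automatically $\FFF$-invariant. By Fubini/linearity,
\[
\int \Phi\,d\nu = \int \ph\,d\nu_1 + \int \ph\,d\nu_2,
\]
and by the standard fact that entropy is subadditive under factor maps (i.e.\ for any joining, $h_\nu(\FFF\x\FFF)\le h_{\nu_1}(\FFF)+h_{\nu_2}(\FFF)$), we get
\[
h_\nu(\FFF\x\FFF)+\int\Phi\,d\nu \le \bigl(h_{\nu_1}(\FFF)+\textstyle\int\ph\,d\nu_1\bigr) + \bigl(h_{\nu_2}(\FFF)+\textstyle\int\ph\,d\nu_2\bigr)\le 2P(\ph).
\]
Taking the supremum over $\nu$ gives $P(\Phi)\le 2P(\ph)$.

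For the lower bound and the identification of the equilibrium state, I would evaluate the free energy directly at $\mu\x\mu$. Product measures are $(\FFF\x\FFF)$-invariant, and the entropy of a product satisfies $h_{\mu\x\mu}(\FFF\x\FFF)=2h_\mu(\FFF)$ (this is a classical identity; for flows it can be reduced to the time-one map, where the analogous statement is standard). Combined with $\int\Phi\,d(\mu\x\mu)=2\int\ph\,d\mu$, we obtain
\[
h_{\mu\x\mu}(\FFF\x\FFF)+\int\Phi\,d(\mu\x\mu) = 2\Bigl(h_\mu(\FFF)+\int\ph\,d\mu\Bigr) = 2P(\ph),
\]
where the last equality uses that $\mu$ is an equilibrium state for $\ph$. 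Combined with the upper bound, this shows $P(\Phi)=2P(\ph)$ and that $\mu\x\mu$ achieves the supremum, hence $\mu\x\mu$ is an equilibrium state for $\Phi$.

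There is no real obstacle here; the only point worth double-checking is the product formula $h_{\mu\x\mu}(\FFF\x\FFF)=2h_\mu(\FFF)$ in the continuous-time setting, which I would justify by reducing to the time-one maps via $h_\mu(\FFF)=h_\mu(f_1)$ and applying the well-known discrete-time product identity, together with subadditivity in the upper-bound step. I would not need any hypothesis beyond continuity of $\FFF$ and $\ph$, so the lemma holds in full generality without invoking expansivity or specification.
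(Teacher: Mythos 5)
Your proof is correct and takes essentially the same route as the paper: both verify that $\mu\x\mu$ attains free energy $h_{\mu\x\mu}(f_1\x f_1)+\int\Phi\,d(\mu\x\mu)=2h_\mu(f_1)+2\int\ph\,d\mu=2P(\ph)$, which equals $P(X\x X,\FFF\x\FFF,\Phi)$. The only difference is that the paper simply asserts the identity $P(\Phi)=2P(\ph)$ (it also follows from Proposition \ref{Addition}, proved topologically via spanning and separated sets), whereas you justify the upper bound $P(\Phi)\le 2P(\ph)$ measure-theoretically, via the variational principle, the marginals of an invariant measure on $X\x X$, and subadditivity of entropy for joinings -- a clean way of supplying the step the paper leaves implicit.
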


\begin{proof}
	Observe that $h_{\mu\x\mu}(f_1\x f_1) = 2 h_\mu(f_1)$
	and
	$\int \Phi\,d(\mu\x\mu) = 2\int\ph\,d\mu. $
	Therefore,
	$h_{\mu\x\mu}(f_1\x f_1) + \int\Phi\,d(\mu\x\mu) = 2P(X,\FFF,\ph) = P(X\x X,\FFF\x\FFF,\Phi).$ The last equality follows from \cite[Theorem 9.8]{Wa}, or as a special case of Proposition \ref{Addition}.
\end{proof}
The following continuous-time version of Ledrappier's theorem is proved by reducing to the discrete-time case, following a similar strategy to \cite[Theorem 4.4.1]{FH}.
\begin{prop} \label{Kprop}
	Let $(X,\FFF)$ be a continuous flow on a compact metric space such that $f_t$ is asymptotically $h$-expansive for all $t\neq 0$, and let $\ph$ be a continuous function on $X$. Let $(X\x X,\FFF\x\FFF)$ be the product of two copies of $(X,\FFF)$,  i.e. the flow $(f_s \x f_s)_{s \in \RR}$ given by 
\[
(f_s\times f_s)(x,y) = (f_sx, f_sy) \text{ for }s \in \RR.
\] 
Define the potential $\Phi: X\x X \to \RR$ by $\Phi(x,y) = \ph(x) + \ph(y)$. If $\Phi$ has a unique equilibrium state in $\MMM(X\x X,\FFF\x\FFF)$, then the unique equilibrium state for $\ph$ in $\MMM(X,\FFF)$ has the Kolmogorov property.
\end{prop}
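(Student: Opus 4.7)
The plan is to reduce to Ledrappier's discrete-time theorem (Theorem~\ref{Ledr}) by passing to the time-$1$ map with an integrated potential, then to promote the $K$-property from $f_1$ to the entire flow using Proposition~\ref{TimeTK}. First I define $\ph_1(x) := \int_0^1 \ph(f_s x)\,ds$ on $X$, and observe that the analogous integration on $X\times X$ yields $\Phi_1(x,y) = \int_0^1 \Phi(f_sx,f_sy)\,ds = \ph_1(x)+\ph_1(y)$. The goal is to transfer the uniqueness hypothesis from $(\FFF\times\FFF,\Phi)$ to $(f_1\times f_1,\Phi_1)$, apply Theorem~\ref{Ledr}, and transfer the conclusion back from $(f_1,\ph_1)$ to $(\FFF,\ph)$.

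The correspondence between flow-equilibrium states and time-$1$ equilibrium states goes as follows. For any $\FFF$-invariant $\mu$, Abramov's formula gives $h_\mu(f_1)=h_\mu(\FFF)$, while Fubini together with $\FFF$-invariance gives $\int\ph_1\,d\mu = \int\ph\,d\mu$; hence every $\FFF$-equilibrium state for $\ph$ is an $f_1$-equilibrium state for $\ph_1$. Conversely, given any $f_1$-invariant $\nu$, its flow average $\bar\nu := \int_0^1(f_s)_*\nu\,ds$ is $\FFF$-invariant and satisfies $\int\ph_1\,d\bar\nu = \int\ph_1\,d\nu$; combining the affinity of entropy with the conjugacy identity $h_{(f_s)_*\nu}(f_1)=h_\nu(f_1)$ yields $h_{\bar\nu}(f_1) \geq h_\nu(f_1)$. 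It follows that $P(f_1,\ph_1)=P(\FFF,\ph)$ and that every $f_1$-equilibrium state for $\ph_1$ is in fact $\FFF$-invariant and coincides with an $\FFF$-equilibrium state for $\ph$. The identical argument on the product system shows that uniqueness of the $(\FFF\times\FFF)$-equilibrium state for $\Phi$ is equivalent to uniqueness of the $(f_1\times f_1)$-equilibrium state for $\Phi_1$.

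With the correspondence in hand, the proof finishes quickly. The system $(X,f_1)$ is asymptotically $h$-expansive by hypothesis, the potential $\ph_1$ is continuous, and by the above $(X\times X, f_1\times f_1)$ admits a unique equilibrium state for $\Phi_1$. Theorem~\ref{Ledr} therefore yields that the unique $f_1$-equilibrium state for $\ph_1$ has the Kolmogorov property for $f_1$. By the correspondence, this measure is the unique $\FFF$-equilibrium state $\mu$ for $\ph$, so $(X,f_1,\mu)$ is a $K$-system. Proposition~\ref{TimeTK} then upgrades this to the $K$-property of the flow $(X,\FFF,\mu)$.

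The only subtle ingredient is confirming that $f_1$-equilibrium states for the integrated potential $\ph_1$ are automatically $\FFF$-invariant, which is what legitimises the whole reduction; this rests on the standard flow-averaging argument together with affinity of entropy, and must be applied both to the base and to the product system. Everything else is bookkeeping and a direct appeal to Theorem~\ref{Ledr} and Proposition~\ref{TimeTK}.
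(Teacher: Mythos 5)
Your reduction to the time-one map is the right idea and matches the paper's strategy, but it contains a genuine gap at exactly the step you call "the only subtle ingredient": the claim that every $f_1$-equilibrium state for $\ph_1$ (and every $(f_1\x f_1)$-equilibrium state for $\Phi_1$) is automatically $\FFF$-invariant. The flow-averaging argument does not give this. Given an $(f_1\x f_1)$-invariant equilibrium state $\nu$ for $\Phi_1$, averaging produces $\tilde\nu=\int_0^1 (f_s\x f_s)_*\nu\,ds$, and the entropy/integral identities show that $\tilde\nu$ is a flow equilibrium state for $\Phi$, hence equals $\mu\x\mu$ by hypothesis; but nothing so far forces $\nu=\tilde\nu$. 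Indeed the asserted principle is false in general: for a suspension flow with constant roof $1$ over a uniquely ergodic base (and $\ph=0$, a setting that is asymptotically $h$-expansive), the measures $m\x\delta_c$ supported on a single level are $f_1$-equilibrium states for $\ph_1$ but are not flow-invariant. So the hypothesis of uniqueness for the product \emph{flow} must be used to rule out such time-one equilibrium states, and your argument never does this; as written, the "equivalence" of uniqueness for $(\FFF\x\FFF,\Phi)$ and $(f_1\x f_1,\Phi_1)$ is circular in the direction you need.

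The paper closes this gap as follows. Uniqueness of the equilibrium state for $\Phi$ makes $\mu\x\mu$ ergodic for the product flow; this is equivalent to weak mixing of $\mu$ for $\FFF$, hence $\mu\x\mu$ is weak mixing for $\FFF\x\FFF$, and since a flow is weak mixing iff every time-$t$ map ($t\neq0$) is ergodic (Rokhlin), $\mu\x\mu$ is ergodic for $f_1\x f_1$. Then one takes $G$ to be the set of $(f_1\x f_1)$-generic points of $\mu\x\mu$: it has full $\mu\x\mu$-measure by Birkhoff, it is invariant under the product flow, and since $\tilde\nu=\mu\x\mu$ one gets $\nu(G)=\int_0^1(f_s\x f_s)_*\nu(G)\,ds=\tilde\nu(G)=1$, which forces $\nu=\mu\x\mu$. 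This genericity step is precisely what substitutes for your unjustified flow-invariance claim; once it is inserted, the rest of your outline (apply Theorem \ref{Ledr} to $(X,f_1,\ph_1)$, identify the unique $f_1$-equilibrium state with $\mu$ via Lemma \ref{ProductES}-type product reasoning, then invoke Proposition \ref{TimeTK}) goes through as in the paper.
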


\begin{proof}
	Let $\mu$ be the unique equilibrium state for $(X,\mathcal{F},\ph)$, and $\mu\x\mu$ the unique equilibrium state for $(X\x X,\FFF\x\FFF)$. We claim that $\mu\x\mu$ is the unique equilibrium state for $(X\x X,f_1\x f_1,\Phi_1)$ where $\ph_1 = \int_{0}^{1}\ph\circ f_s\,ds$, and $\Phi_1(x,y) = \ph_1(x) + \ph_1(y)$.  Let $\nu$ be an ergodic equilibrium state for $(X\x X,f_1\x f_1,\Phi_1)$, and let $\tilde{\nu} = \int_{0}^{1}(f_s \x f_s)_*\nu\,ds$. We see that $\tilde{\nu}$ is $(\FFF\x\FFF)$-invariant and also
	\begin{align*}
	h_{\tilde{\nu}}(f_1\x f_1) + \int\Phi\,d\tilde{\nu}
	&= h_\nu(f_1\x f_1) + \int\int_{0}^{1}\Phi\circ (f_s\x f_s)\,ds\,d\nu
	\\
	&= h_\nu(f_1\x f_1) + \int \Phi_1 \,d\nu
\comment{	\\
	&= P_\nu(X\x X,f_1\x f_1,\Phi_1)}
	\\
	&= P(X\x X,f_1\x f_1,\Phi_1)
	\geq P(X\x X,\FFF\x\FFF,\Phi).
	\end{align*}
	\comment{The proof of this last inequality is as follows. Observe that $\int \int_{0}^{1}\ph\circ f_s\,ds\x\int_{0}^{1}\ph\circ f_s\,ds\,d\mu\x\mu = \int \ph\x\ph\,d\mu\x\mu$ by flow-invariance of $\mu$.}

	Thus, $\tilde{\nu}$ is an equilibrium state for $(X\x X,\FFF\x\FFF,\Phi)$, and consequently, is equal to $\mu\x\mu$. 	Since $\mu \x \mu$ is ergodic for $\FFF\x\FFF$, $\mu \x \mu$ is also weak mixing. A proof of this can be found by adapting the arguments in \cite[Theorems 1.21, 1.24]{Wa} to continuous time. It follows from $\S 5.8$ of \cite{Rok} that a flow is weak mixing if and only if every time-$t$ map is ergodic, for $t\neq 0$. Hence, $(X\x X,f_1\x f_1,\mu\x\mu)$ is ergodic. 
	
	Let $G$ be the set of $(f_1\x f_1)$-generic points of $\mu\x\mu$. Because $\mu\x\mu$ is flow-invariant, it follows that $(f_s\x f_s)G = G$ for all $s\in\R$. Using this and the fact that $\mu\x\mu = \int_0^1 (f_s\x f_s)_*\nu\,ds$, we see that
	$$1 = \int_{0}^{1}(f_s\x f_s)_*\nu(G)\,ds = \int_{0}^{1}\nu(f_s\x f_sG)\,ds = \int_{0}^{1}\nu(G)\,ds.$$
	Therefore, because $\nu$ is ergodic, $\nu = \mu\x\mu$. Thus, $(X\x X,f_1\x f_1,\Phi_1)$ has a unique equilibrium state. It follows from Theorem \ref{Ledr} that $(X,f_1,\mu)$ has the $K$-property. Thus, by Proposition \ref{TimeTK}, $(X, \FFF ,\mu)$ is a $K$-flow.
\end{proof}

\subsection{Topological pressure and uniqueness of equilibrium states} \label{pressure} Our approach to showing uniqueness of equilibrium states is based on a general theorem in \cite{CT}. We provide the necessary definitions to understand this framework. A subset $\CCC \subset X\x[0, \infty)$ should be thought of as a collection of orbit segments of the flow via the identification
$$(x,t)\in\CCC \leftrightarrow \{ f_sx \mid s\in [0,t)\}.$$

Given $\CCC\subset X\x [0, \infty)$, for all $t\geq 0$, define $\CCC_t = \{x\in X\mid (x,t)\in\CCC\}$. We say that a set $E$ is $(t,\eps)$-separated if given $x,y\in E$, $B_t(x,\eps)$ and $B_t(y,\eps)$ are disjoint. For all $\eps > 0$, define 
\begin{align*}
\Lambda_t(\CCC,\ph,\eps) &= \sup\{\sum_{x\in E} e^{\int_0^t\ph(f_sx)\,ds}\mid E\subset \CCC_t \text{ is }(t,\eps)\text{-separated}\}.
\end{align*}
It suffices to consider $E\subset \CCC_t $ which are $(t,\eps)$-separated set of maximal cardinality in $\CCC_t$, or else we would be able to increase the sum by adding another point to $E$. We say such a set $E$ is \emph{maximizing} for $\Lambda_t(\CCC,\ph,\eps)$ if it achieves the supremum. We define
$P(\CCC,\ph,\eps) = \limsup \frac{1}{t}\log \Lambda_t(\CCC,\ph,\eps)$
and 
$P(\CCC,\ph) = \lim\limits_{\eps\to 0}P(\CCC,\ph,\eps)$. 

If $\CCC$ is of the form $Z\x [0, \infty)$, then we write $\Lambda_t(Z,\ph,\eps)$ instead of $\Lambda_t(\CCC,\ph,\eps)$. In this case, $P(\CCC,\ph)$ is just the upper capacity pressure of the set $Z$, and we can write $P(Z, \ph)$. If $Z=X$, then we recover the standard topological pressure of the potential $\ph$ on the flow $(X, \FFF)$, and we write $P(\ph)$. We note that  maximizing $(t, \eps)$-separated sets for $\Lambda_t(X,\ph,\eps)$ always exist by compactness. The following lemma is a straight-forward exercise.

	\begin{lem}\label{Maximum}
		Given two collections $\CCC,\DDD\subset X\x [0, \infty)$, then
		$$P(\CCC\cup \DDD,\ph) = \max\{P(\CCC,\ph), P(\DDD,\ph)\}.$$
	\end{lem}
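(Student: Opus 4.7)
The plan is the standard two-way inequality argument for pressure under a finite union, adapted to the flow/collection-of-orbit-segments setting defined above. The main point is that separated sets interact well both with unions and with restrictions to subcollections, so the $\Lambda_t$ quantities admit clean comparisons; the logarithm then swallows the resulting constants when we pass to $P(\CCC,\ph,\eps)$.

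For the inequality $P(\CCC \cup \DDD, \ph) \geq \max\{P(\CCC, \ph), P(\DDD, \ph)\}$, I would simply observe that any $(t,\eps)$-separated subset of $\CCC_t$ is a $(t,\eps)$-separated subset of $(\CCC \cup \DDD)_t = \CCC_t \cup \DDD_t$, so
\[
\Lambda_t(\CCC,\ph,\eps) \leq \Lambda_t(\CCC \cup \DDD,\ph,\eps),
\]
and symmetrically for $\DDD$. Taking $\limsup \frac{1}{t}\log$ and then $\eps \to 0$ gives $P(\CCC,\ph), P(\DDD,\ph) \leq P(\CCC \cup \DDD,\ph)$, hence the max.

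For the reverse inequality, the key step is that if $E \subset \CCC_t \cup \DDD_t$ is $(t,\eps)$-separated, then $E \cap \CCC_t$ and $E \cap \DDD_t$ are each $(t,\eps)$-separated subsets of $\CCC_t$ and $\DDD_t$ respectively. Splitting the Birkhoff-weighted sum across this partition,
\[
\sum_{x \in E} e^{\int_0^t \ph(f_s x)\,ds} \leq \Lambda_t(\CCC,\ph,\eps) + \Lambda_t(\DDD,\ph,\eps) \leq 2\max\bigl\{\Lambda_t(\CCC,\ph,\eps),\Lambda_t(\DDD,\ph,\eps)\bigr\}.
\]
Taking the supremum over such $E$, applying $\frac{1}{t}\log$ and then $\limsup$ kills the factor of $2$, yielding $P(\CCC \cup \DDD,\ph,\eps) \leq \max\{P(\CCC,\ph,\eps), P(\DDD,\ph,\eps)\}$. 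Finally, since the max of two monotone-in-$\eps$ quantities is monotone, passing to $\eps \to 0$ preserves the inequality and gives the desired bound.

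There is no real obstacle here; the only minor point to keep track of is that the splitting $E = (E \cap \CCC_t) \sqcup (E \cap \DDD_t)$ (assigning overlap arbitrarily to one side) preserves the separation property trivially, and that the additive constant $\log 2$ is annihilated by the $\frac{1}{t}$ prefactor in the $\limsup$.
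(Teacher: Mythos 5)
Your proof is correct and is exactly the standard argument the paper has in mind; the paper itself does not write it out, dismissing Lemma \ref{Maximum} as a straightforward exercise. Both directions are handled properly: the inclusion of separated sets gives the lower bound, and the splitting $E=(E\cap\CCC_t)\cup(E\cap\DDD_t)$ together with $\Lambda_t(\CCC\cup\DDD,\ph,\eps)\leq 2\max\{\Lambda_t(\CCC,\ph,\eps),\Lambda_t(\DDD,\ph,\eps)\}$, the fact that $\limsup$ commutes with the maximum of two sequences, and the monotonicity in $\eps$ give the upper bound.
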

For an invariant measure $\mu$, we write $P_\mu(\ph)$ for the \emph{free energy}
\[
P_\mu(\ph) = h_\mu(f) + \int\ph\,d\mu.
\]
We make the following definitions.

	\begin{defn}\label{empirical}
	For  $(x,t)\in X\x [0,\infty)$, define the \emph{empirical measure} $\s{E}_{(x,t)}$ by 
	$$\int \psi\,d\s{E}_{(x,t)} = \frac{1}{t}\int_{0}^{t}\psi(f_sx)\,ds$$
	for $\psi\in C(X)$. For a collection, $\CCC\subset X\x [0, \infty)$, we define $\MMM_t(\CCC)$ to be the set of convex combinations of empirical measures $\s{E}_{(x,t)}$ for points $x \in \CCC_t$, i.e. $$\MMM_t(\CCC) = \{ \sum_{i=1}^k a_i \s{E}_{x_i, t} : a_i \geq 0, \sum a_i=1, (x_i, t) \in \CCC \}.$$
	We define $\MMM(\CCC)$ to be the set of accumulation points of measures in $\MMM_t(\CCC)$.  That is, $\MMM(\CCC) = \{  \mu = \lim_{k \to \infty} \mu_{k} : \mu_k \in \MMM_{t_k}(\CCC), t_k \to \infty\}$.
	\end{defn}
	We recall a pressure estimate, which is proved as \cite[Proposition 5.1]{BCFT}.
	\begin{prop} \label{bcftest}
		Let $\CCC\subset X\x [0, \infty)$. Then $P(\CCC,\ph)\leq \sup_{\mu\in \MMM(\CCC)}P_\mu(\ph)$.
	\end{prop}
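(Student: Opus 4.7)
The plan is to adapt the Misiurewicz proof of the variational principle, keeping track of the collection $\CCC$ and working via the time-$1$ map. Fix $\eps > 0$ and for each $t > 0$ choose a maximizing $(t,\eps)$-separated set $E_t \subset \CCC_t$ realizing $\Lambda_t(\CCC,\ph,\eps)$. Form the weighted probability measure
$$\nu_t = \Lambda_t(\CCC,\ph,\eps)^{-1} \sum_{x \in E_t} e^{\int_0^t \ph(f_sx)\,ds}\,\delta_x$$
and its flow-average $\mu_t = \tfrac{1}{t}\int_0^t (f_s)_*\nu_t\,ds$. A direct computation shows $\mu_t = \sum_{x \in E_t} a_x \s{E}_{(x,t)}$ with $a_x \geq 0$ summing to $1$, so $\mu_t \in \MMM_t(\CCC)$. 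Pass to a subsequence $t_k \to \infty$ along which $\tfrac{1}{t_k}\log \Lambda_{t_k}(\CCC,\ph,\eps) \to P(\CCC,\ph,\eps)$ and $\mu_{t_k} \to \mu$ weakly. Then $\mu$ is $\FFF$-invariant (the standard Krylov-Bogolyubov calculation, since $(f_s)_*\mu_t - \mu_t$ has total variation $O(|s|/t)$) and $\mu \in \MMM(\CCC)$ by definition.

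For the entropy bound, use uniform continuity of the flow to pick $\delta \in (0,\eps)$ such that $d(x,y) < \delta$ implies $d(f_sx,f_sy) < \eps$ for all $s \in [0,1]$. Choose a finite Borel partition $\xi$ of $X$ with $\mathrm{diam}(\xi) < \delta$ and $\mu(\partial A) = 0$ for each $A \in \xi$, and write $\xi_n = \bigvee_{k=0}^{n-1} f_k^{-1}\xi$. Setting $n = \lfloor t \rfloor + 1$, each atom of $\xi_n$ has $d_n$-diameter $<\delta$, hence $d_t$-diameter $<\eps$, so it contains at most one point of $E_t$. A direct calculation then yields
$$H_{\nu_t}(\xi_n) = \log\Lambda_t(\CCC,\ph,\eps) - t\int \ph\,d\mu_t.$$
The Misiurewicz combinatorial trick now enters: for fixed $\ell \in \N$, write $n = q\ell + r$ with $0 \leq r < \ell$, and use subadditivity of entropy together with averaging over integer shifts to obtain, after dividing by $t$, passing to $t = t_k$, and using upper semicontinuity of $\nu \mapsto H_\nu(\xi_\ell)$ at $\mu$,
$$P(\CCC,\ph,\eps) - \int \ph\,d\mu \leq \tfrac{1}{\ell} H_\mu(\xi_\ell) + O(1/\ell).$$
Letting $\ell \to \infty$ gives $h_\mu(f_1) + \int \ph\,d\mu \geq P(\CCC,\ph,\eps)$, and Abramov's formula $h_\mu(f_1) = h_\mu(\FFF)$ upgrades this to $P_\mu(\ph) \geq P(\CCC,\ph,\eps)$. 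Taking the supremum over $\MMM(\CCC)$ and sending $\eps \to 0$ proves the proposition.

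The main technical point is executing the Misiurewicz step in continuous time: the partition $\xi$ must be chosen, via uniform continuity of the time-$1$ map, so that the discrete refinement of length $\lfloor t\rfloor+1$ separates the continuous-time separated set $E_t$, and the integer-shift averaging must correctly replace $\nu_{t_k}$-entropy by an entropy computed for the flow-average $\mu$. The rest is bookkeeping; the crucial observation is that the construction naturally delivers a measure in $\MMM(\CCC)$, because $\mu_t$ is already a convex combination of empirical measures along orbit segments in $\CCC$.
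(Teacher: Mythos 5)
Your proposal is correct and is essentially the argument behind this statement: the paper does not prove Proposition \ref{bcftest} itself but cites it as Proposition 5.1 of \cite{BCFT}, whose proof is exactly this Misiurewicz-style adaptation of the variational principle (weighted measures on near-maximizing $(t,\eps)$-separated subsets of $\CCC_t$, flow averaging, a small-diameter partition with null boundary refined under the time-one map, subadditivity with shift averaging, and the observation that the limit lies in $\MMM(\CCC)$). The only loose ends in your sketch --- that the supremum defining $\Lambda_t(\CCC,\ph,\eps)$ need not be attained, so one takes near-maximizing sets, and that the integer-shift averages converge a priori to a time-one-invariant measure $\bar\mu$ with $\mu=\int_0^1 (f_u)_*\bar\mu\,du$, so one uses affinity and conjugation-invariance of $h_\cdot(f_1)$ to transfer the entropy bound to $\mu$ --- are standard and do not affect the validity of the approach.
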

Following \cite{CT}, we define a decomposition of the space of orbit segments.

\begin{defn}\label{def.decomp} 
A \emph{decomposition for $X\times [0, \infty)$} consists of three collections $\PPP, \GGG, \SSS\subset X\times [0, \infty)$ for which there exist three functions $p, g, s\colon X\times [0, \infty)\rightarrow [0, \infty)$ such that for every $(x,t)\in X\times [0, \infty)$, the values $p=p(x,t)$, $g=g(x,t)$, and $s=s(x,t)$ satisfy $t=p+g+s$, and 
\[
(x,p)\in \PPP,\quad
(f_p(x), g)\in \GGG,\quad
(f_{p+g}(x), s)\in \SSS.
\]
 For any $M\in[0,\infty)$, define $\GGG^M = \{(x,t)\mid p(x,t)\leq M, s(x,t)\leq M\}$.
\end{defn}
The idea is that $\GGG$ should have `nice' properties, and that $\PPP, \SSS$ are smaller than the whole space in terms of topological pressure. One of these `nice' properties is the specification property. A fairly strong version of this, which we verify for certain orbit segments in \cite[Theorem 4.1]{BCFT}, is given as follows.
\begin{defn}
A collection of orbit segments $\mathcal{C}\subset X\times [0, \infty)$ has \emph{specification at scale $\rho>0$} if  there exists $\tau=\tau(\rho)$ such that for every $(x_1, t_1)$, $\dots, (x_N, t_N)\in \mathcal{C}$  and \emph{every} collection of times $\tau_1, \ldots, \tau_{N-1}$ with $\tau_i \geq \tau$ for all $i$, there exists a point $y\in X$ such that for $s_0=\tau_0=0$ and $s_j=\sum_{i=1}^j t_i + \sum_{i=0}^{j-1} \tau_i$, we have
\[
f_{s_{j-1} + \tau_{j-1}}(y)\in B_{t_j}(x_j, \rho)
\]
for every $j\in \{ 1,\dots, N\}$.  A collection $\mathcal{C}\subset X\times [0, \infty)$ has \emph{specification} if it has specification at all scales.
\end{defn}
The other `nice' property we ask for is the Bowen property for a collection of orbit segments.

	\begin{defn}
		We say that $\ph : X\to\R$ has the Bowen property at scale $\eps > 0$ on $\CCC\subset X\x[0,\infty)$ if
		$$V(\CCC,\ph,\eps) := \sup\left \{\abs{\int_{0}^{t}[\ph(f_sx) - \ph(f_sy)]\,ds}\mid (x,t)\in\CCC, y\in B_t(x,\eps) \right \} < \infty.$$
			\end{defn}

We consider a certain weak expansivity property. Let $\eps > 0$. Then define
$$\Gamma_\eps(x) := \{y\mid d(f_tx,f_ty) < \eps \text{ for all }t\in\R\}.$$
For a flow, following \cite{CT}, we define the set of non-expansive points at scale $\eps$ to be
$$\NE(\eps) := \{x\mid \Gamma_\eps(x)\not\subset f_{[-s,s]}(x) \text{ for all }s\in\R\}.$$
We say a $\FFF$-invariant measure $\nu$ is \textit{almost expansive} at scale $\eps$ if $\nu(\NE(\eps)) = 0$. We define the pressure of obstructions to expansivity at scale $\eps$,
$$P_{\text{exp}}^\perp(\ph,\eps) = \sup_{\nu}\{h_\nu(f_1) + \int \ph\,d\nu\mid \nu(\NE(\eps)) > 0\},$$
by taking a supremum over all non-almost expansive measures. Then define
$$P_{\text{exp}}^\perp(\ph) = \lim\limits_{\eps\to 0}P_{\text{exp}}^\perp(\ph,\eps).$$
Given a collection  $\CCC$, we define a related `discretized' collection by
\[
[\CCC] := \{(x,n) \in X\times \mathbb N \mid (f_{-s}x, n+s+t) \in \CCC \text{ for some }s,t\in [0,1]\}.
\]
We can now state the abstract theorem for uniqueness of equilibrium states proved in \cite{CT}.
\begin{thm}[Climenhaga-Thompson] \label{CT}
	Let $(X,\FFF)$ be a continuous flow on a compact metric space, and $\ph : X\to \R$ a continuous potential. Suppose that $P_{\text{exp}}^\perp(\ph) < P(\ph)$ and $X\x [0, \infty)$ admits a decomposition $(\PPP,\GGG,\SSS)$ with the following properties:
	\begin{enumerate}
		\item $\GGG$ has specification at all scales;
		\item $\ph$ has the Bowen property on $\GGG$;
		\item $P([\PPP]\cup[\SSS],\ph) < P(\ph)$.
	\end{enumerate}
	Then $(X,\FFF,\ph)$ has a unique equilibrium state.
\end{thm}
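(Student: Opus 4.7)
The plan is to follow the classical Bowen strategy for uniqueness of equilibrium states, adapted to the non-uniform setting provided by the decomposition $(\PPP,\GGG,\SSS)$. I would first fix $\eps>0$ small enough that $P_{\text{exp}}^\perp(\ph,\eps)<P(\ph)$, that the gap $P([\PPP]\cup[\SSS],\ph,\eps) < P(\ph)$ persists at this scale, and that $\GGG$ has specification at this scale with gluing time $\tau=\tau(\eps)$. Existence of an equilibrium state is then standard: applying Proposition \ref{bcftest} and entropy upper semi-continuity on the complement of $\NE(\eps)$, any weak-$*$ accumulation point of near-maximizing empirical measures is an equilibrium state, because the $P_{\text{exp}}^\perp$ hypothesis prevents mass from escaping into the non-expansive part.

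The core construction is a reference measure $\nu$ with a Gibbs-like property on $\GGG$. Let $E_n$ be a maximizing $(t_n,\eps)$-separated subset of $X$ for $\Lambda_{t_n}(X,\ph,\eps)$ and set
$$\nu_n = \frac{1}{\Lambda_{t_n}(X,\ph,\eps)}\sum_{x\in E_n} e^{\int_0^{t_n}\ph(f_sx)\,ds}\cdot \frac{1}{t_n}\int_0^{t_n}\delta_{f_sx}\,ds,$$
and let $\nu$ be a subsequential weak-$*$ limit. Specification of $\GGG$ lets one shadow any $(x,t)\in\GGG$ by many essentially distinct points arising from the $E_n$, while the Bowen property on $\GGG$ controls the associated Birkhoff sums uniformly. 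This yields the lower bound $\nu(B_t(x,\eps/2))\geq K\, e^{-tP(\ph)+\int_0^t\ph(f_sx)\,ds}$ for $(x,t)\in\GGG$. The matching upper bound $\nu(B_t(x,\eps))\leq K'\, e^{-tP(\ph)+\int_0^t\ph(f_sx)\,ds}$ holds globally on $X\times[0,\infty)$ by a direct covering argument, since a Bowen ball of radius $\eps$ can be covered by a bounded number of $(t,2\eps)$-separated points which all contribute comparable weights to $\Lambda_t$.

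For uniqueness, I would take an ergodic equilibrium state $\mu$ and aim to show $\mu=\nu$. Almost expansivity gives $\mu(\NE(\eps))=0$, which validates the Brin--Katok formula at scale $\eps$, so that $h_\mu$ can be computed from $\mu$-measures of Bowen balls. The key step is to feed the pressure gap $P([\PPP]\cup[\SSS],\ph)<P(\ph)$ into Proposition \ref{bcftest} in order to show that for every $\delta>0$ there exists $M$ such that the collection $\GGG^M$ carries $\mu$-mass arbitrarily close to $1$ in the sense of long-orbit densities; equivalently, for $\mu$-typical $x$ and large $t$, the prefix/suffix lengths $p(x,t),s(x,t)$ in the decomposition are bounded by $M$. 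Combining this with the global upper Gibbs bound and the lower Gibbs bound on $\GGG$ (applied after absorbing the bounded prefixes and suffixes via specification) produces an estimate $\mu(A)\leq C\,\nu(A)$ on a basis of Bowen balls, hence $\mu\ll\nu$; the Gibbs lower bound then forces $\mu=\nu$. The main obstacle is this last comparison step: the pressure gap is stated at the topological level of $(t,\eps)$-separated sets inside $[\PPP]\cup[\SSS]$, while the needed conclusion is a measure-theoretic statement about $\mu$-generic decompositions. Reconciling these requires careful use of the discretization $[\CCC]$, which bridges continuous-time flow averages and discrete-time $(n,\eps)$-separated counts, and is the technically delicate heart of the argument.
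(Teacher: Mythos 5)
Note first that this paper does not prove Theorem \ref{CT}: it is quoted from \cite{CT}, and only fragments of that proof are re-stated here (the counting Lemma \ref{CombinedLemma}, the Gibbs property on $\GGG^M$, the adapted-partition approximation results). Your Bowen-style outline is the right family of argument, but as written it has concrete gaps. The most serious one is the step you yourself flag: the passage from the topological gap $P([\PPP]\cup[\SSS],\ph)<P(\ph)$ to the statement that $\mu$-typical orbit segments have prefix/suffix lengths bounded by $M$. That statement does not follow from the hypotheses (the decomposition is arbitrary and the gap is a statement about $(t,\eps)$-separated counts, not about generic orbits), and it is not how \cite{CT} proceed. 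Their substitute is a counting estimate of the type recorded here as Lemma \ref{CombinedLemma}: if a union of adapted partition elements indexed by $E_t'\subset E_t$ has $\nu$-measure bounded below, then the corresponding orbit segments can be cut down to $\GGG^M$ while still carrying partition sum at least $C_\alpha e^{tP(\ph)}$; combined with almost expansivity (so that $h_\nu(f_t)=h_\nu(f_t,\s{A}_t)$, cf.\ Corollary \ref{ScaleEntropy}), the approximation of invariant sets by adapted partitions (cf.\ Proposition \ref{InvariantApprox}), and the Gibbs lower bound for the constructed measure, this rules out a mutually singular ergodic equilibrium state. Leaving this bridge open leaves the proof essentially unproved.

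Two further gaps. (i) Your Gibbs bounds are not obtainable as claimed: the ``global'' upper bound needs the Bowen property inside the Bowen ball, which is only available on $\GGG$, and needs $\Lambda_t(X,\ph,\eps)\le Ce^{tP(\ph)}$, i.e.\ supermultiplicativity of partition sums, which in this non-uniform setting is itself proved by gluing \emph{good} segments; likewise the lower bound on $\GGG$ requires knowing that segments in $\GGG^M$ carry a definite proportion of $\Lambda_t(X,\ph,\eps)$, and that is exactly where the pressure gap enters in \cite{CT} --- not only at the final measure-theoretic comparison, as in your sketch. (ii) Ergodicity of the reference measure is never addressed. Even granting $\mu\ll\nu$ for every ergodic equilibrium state $\mu$, uniqueness does not follow unless $\nu$ is ergodic (two distinct ergodic equilibrium states, necessarily mutually singular, could both be absolutely continuous with respect to a non-ergodic $\nu$), and ``the Gibbs lower bound forces $\mu=\nu$'' does not close this. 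In \cite{CT} ergodicity of the constructed equilibrium state is proved by a separate specification/Gibbs argument, whose sharpened analogue appears in this paper as the weak mixing proof in \S\ref{productproof}. So: right skeleton, but the counting lemma, the role of the pressure gap in the Gibbs estimates, and the ergodicity step all need to be supplied.
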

This is applied in \cite{BCFT} to give the following result.
\begin{thm} [Burns-Climenhaga-Fisher-Thompson]
Let $(g_t)_{t\in\R}$ be the geodesic flow over a closed rank 1 manifold $M$ and let $\varphi\colon T^1M\to \RR$ be $\varphi=q\vg$ or be H\"older continuous. If $P(\sing, \varphi)<P(\varphi)$, then there exists a unique equilibrium state $\mu_\varphi$.
\end{thm}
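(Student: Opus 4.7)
The plan is to verify the hypotheses of Theorem \ref{CT} by constructing a decomposition of $T^1M \times [0, \infty)$ adapted to the continuous hyperbolicity function $\lambda$. Fix a small threshold $\eta > 0$. For an orbit segment $(v, t)$, let $p(v,t)$ be the length of the longest initial subsegment lying entirely in the open set $\{\lambda < \eta\}$, and let $s(v,t)$ be the analogous terminal length; set $g(v,t) = t - p(v,t) - s(v,t)$. This induces collections $\PPP$ and $\SSS$ whose orbit segments stay in the closed set $\{\lambda \leq \eta\}$ throughout, and $\GGG$ whose orbit segments have $\lambda \geq \eta$ at both endpoints, so they begin and end in the uniformly hyperbolic region.

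For hypothesis (1), orbit segments in $\GGG$ begin and end with $\lambda \geq \eta$, where one has uniform control on the angle between $E^s$ and $E^u$ and on the growth of Jacobi fields (see \cite[Lemma 2.11, \S 3.3]{BCFT}). These estimates provide a local product structure at a scale depending on $\eta$, enabling the usual Anosov-type specification construction: one shadows a finite concatenation of segments from $\GGG$ by inserting transitions of length at least some $\tau(\rho)$ at each junction. For hypothesis (2), the Bowen property on $\GGG$ reduces to shadowing in the hyperbolic region; for the geometric potential $\ph = q\vg$, one combines bounded second fundamental form data at the endpoints with the cocycle interpretation of $\vg$ to control the distortion integral. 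The expansivity bound $P_{\text{exp}}^\perp(\ph) < P(\ph)$ follows because transverse intersection of stable and unstable manifolds at regular points forces $\NE(\eps) \subseteq \Sing$ for all sufficiently small $\eps$, so any non-almost-expansive invariant measure is supported on $\Sing$ and has free energy at most $P(\Sing, \ph) < P(\ph)$.

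The main obstacle is hypothesis (3), the pressure gap $P([\PPP] \cup [\SSS], \ph) < P(\ph)$. By Lemma \ref{Maximum} it suffices to bound $P(\PPP, \ph)$ and $P(\SSS, \ph)$ separately, and Proposition \ref{bcftest} reduces each to a supremum of free energies over measures $\nu \in \MMM(\PPP)$ (respectively $\MMM(\SSS)$). Any such $\nu$ is a weak-$*$ limit of empirical measures supported on orbit segments lying in $\{\lambda \leq \eta\}$, so by continuity of $\lambda$ it is itself supported in this closed set; sending $\eta \to 0$, any accumulation point is an invariant measure supported on $\{\lambda = 0\}$, which by \cite[Corollary 3.6]{BCFT} forces support on $\Sing$. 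Combining this with upper semi-continuity of the entropy (available because rank 1 geodesic flows are entropy-expansive), the free energies in question are bounded by $P(\Sing, \ph) + o(1)$ as $\eta \to 0$, so choosing $\eta$ sufficiently small yields the required strict inequality from the hypothesis $P(\Sing, \ph) < P(\ph)$. This passage from the capacity pressure of orbit segments concentrated near $\Sing$ to the variational pressure on $\Sing$ itself is the technically delicate step, and it is where the pressure gap hypothesis enters essentially.
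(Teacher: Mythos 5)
Your overall architecture is the right one, and it is in fact the architecture behind the result: the paper does not reprove this theorem but cites \cite{BCFT}, whose proof runs exactly through Theorem \ref{CT} with a decomposition built from the function $\lambda$, specification and the Bowen property on the good core, $\NE(\eps)\subset\Sing$ to dispose of the expansivity hypothesis, and a pressure estimate for the bad collections via empirical measures and upper semicontinuity of entropy (the latter is essentially Proposition \ref{bcftest} plus Theorem \ref{Pressure Decrease} of this paper). Your treatment of expansivity and of the pressure gap for the prefix/suffix collections matches that scheme.

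The genuine gap is in your choice of decomposition. You define the prefix and suffix by the \emph{pointwise} condition that the segment stays in $\{\lambda<\eta\}$, so your good core is only guaranteed to satisfy $\lambda\geq\eta$ at the two endpoint times. That is compatible with specification (the gluing only uses uniform regularity at the endpoints being joined), but it does not give the Bowen property, and your justification of hypothesis (2) -- ``bounded second fundamental form data at the endpoints'' -- does not control the distortion integral along the middle of the segment. Concretely, on a rank 1 manifold containing a flat cylinder one can build segments $(v,t)$ with $\lambda(v),\lambda(g_tv)\geq\eta$ (place the endpoints a bounded time inside the negatively curved region; the Riccati equation restores $\lambda$ in bounded time) whose middle wraps around the flat part for an arbitrarily long time $T$. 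A parallel geodesic in the flat strip at transverse distance $\eps/2$ lies in $B_t(v,\eps)$ but does not converge toward $v$ there, so for a H\"older potential varying across the strip the integral $\int_0^t|\ph(g_sv)-\ph(g_sw)|\,ds$ grows linearly in $T$, i.e.\ $V(\GGG,\ph,\eps)=\infty$ on your collection, and Theorem \ref{CT}(2) fails. This is precisely why \cite{BCFT} (and \S\ref{lambdaestimate} here) use the \emph{time-averaged} $\lambda$-decomposition, with $\PPP=\SSS=B(\eta)=\{(x,t):\frac{1}{t}\int_0^t\lambda(f_sx)\,ds\leq\eta\}$, so that every initial and terminal subsegment of a good segment has average $\lambda$ at least $\eta$; that uniform averaged regularity is what yields the Jacobi-field growth and Bowen-ball contraction estimates behind \cite[Corollaries 7.5, 7.8]{BCFT}. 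If you replace your pointwise decomposition by the averaged one, the rest of your outline goes through, with one small adjustment in the pressure step: measures in $\MMM(B(\eta))$ are no longer supported in $\{\lambda\leq\eta\}$ but only satisfy $\int\lambda\,d\mu\leq\eta$; lower semicontinuity of $\lambda$, upper semicontinuity of entropy (from entropy expansivity), and \cite[Corollary 3.6]{BCFT} then give $P([\PPP]\cup[\SSS],\ph)\leq P(\Sing,\ph)+o(1)<P(\ph)$ for small $\eta$, exactly as in Lemma \ref{BracketRem} and Theorem \ref{Pressure Decrease}.
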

Our strategy is to adapt the abstract result of Theorem \ref{CT} to obtain uniqueness of equilibrium states for the product system $(T^1M \x T^1M, (g_t \x g_t))$.  This involves finding a suitable decomposition for $(T^1M \x T^1M, (g_t \x g_t))$ which satisfies properties (1), (2), and the pressure gap (3). However, even when this has been achieved, Theorem \ref{CT} will not apply directly because the expansivity condition  $P_{\text{exp}}^\perp(\ph) < P(\ph)$ is never satisfied for a system which is the product of two flows. Adapting the proof of Theorem \ref{CT} to cover the necessary notion of expansivity for a product of two flows is a major technical point of our argument.

	\section{Products of collections of orbit segments} \label{products}
	Given two collections of orbit segments $\CCC, \DDD \subset X\x[0, \infty)$, we define the \emph{product collection} to be
		$$\CCC\x\DDD := \{((x,y),t)\mid (x,t)\in\CCC \text{ and }(y,t)\in\DDD\}.$$
The set 	$\CCC\x\DDD$ is interpreted as a collection of orbit segments for the product flow $(X \times X, \FFF \times \FFF)$ by the identification
\[
((x,y),t) \longleftrightarrow \{ (f_sx, f_sy) \mid s \in [0, t)\}.
\]
In this section, we give general results on lifting results on collections of orbit segments to products of collections of orbit segments. 

\begin{lem} \label{specprod}
		Suppose $\CCC\subset X\x[0, \infty)$ has the specification property for $\FFF$. Then $\CCC\x \CCC$ has specification for $\FFF \times \FFF$.
	\end{lem}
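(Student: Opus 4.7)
The plan is to apply the specification property for $\CCC$ separately in each coordinate, using the \emph{same} sequence of transition times, and then assemble the two shadowing points into a single point of $X\times X$ that shadows in the product metric.

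First I would fix a scale $\rho>0$ and let $\tau=\tau(\rho)$ be the transition time provided by the specification property for $\CCC$ at scale $\rho$. Given orbit segments $((x_1,y_1),t_1),\ldots,((x_N,y_N),t_N)\in\CCC\times\CCC$ and any transition times $\tau_1,\ldots,\tau_{N-1}\geq\tau$, I would note that by definition of the product collection, $(x_j,t_j)\in\CCC$ and $(y_j,t_j)\in\CCC$ for every $j$. Applying specification for $\CCC$ to the sequence $(x_1,t_1),\ldots,(x_N,t_N)$ with the chosen transition times yields a point $z\in X$ shadowing at scale $\rho$, and applying specification again to $(y_1,t_1),\ldots,(y_N,t_N)$ with the \emph{same} transition times yields a point $w\in X$ shadowing at scale $\rho$.

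Setting $s_0=\tau_0=0$ and $s_j=\sum_{i=1}^j t_i+\sum_{i=0}^{j-1}\tau_i$ as in the definition, the two conclusions read $f_{s_{j-1}+\tau_{j-1}}(z)\in B_{t_j}(x_j,\rho)$ and $f_{s_{j-1}+\tau_{j-1}}(w)\in B_{t_j}(y_j,\rho)$. Since $\tilde d((a,b),(a',b'))=\max\{d(a,a'),d(b,b')\}$, the product Bowen ball factors as $B_{t_j}((x_j,y_j),\rho)=B_{t_j}(x_j,\rho)\times B_{t_j}(y_j,\rho)$, and therefore the point $(z,w)\in X\times X$ satisfies
\[
(f_{s_{j-1}+\tau_{j-1}}\times f_{s_{j-1}+\tau_{j-1}})(z,w)\in B_{t_j}((x_j,y_j),\rho)
\]
for every $j\in\{1,\ldots,N\}$. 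This is exactly the specification property for $\CCC\times\CCC$ at scale $\rho$ with transition time $\tau(\rho)$. Since $\rho>0$ was arbitrary, $\CCC\times\CCC$ has specification at every scale.

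This proof is essentially bookkeeping — the only thing to watch is that one must use the same transition times in both coordinates so that the two shadowing points can be combined into one in $X\times X$, but this is allowed because the specification property permits \emph{any} choice of $\tau_i\geq\tau$. There is no genuine obstacle.
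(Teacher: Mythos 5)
Your proof is correct and is essentially the same argument as the paper's: apply specification for $\CCC$ in each coordinate with the same prescribed transition times (which is legitimate precisely because the definition allows \emph{any} $\tau_i\geq\tau$) and combine the two shadowing points using $B_{t_j}((x_j,y_j),\rho)=B_{t_j}(x_j,\rho)\times B_{t_j}(y_j,\rho)$. Your closing remark correctly identifies the only subtlety, which the paper also flags: the weaker form of specification with transition times merely bounded above would not lift to the product.
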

	\begin{proof}
		Let $\rho > 0$, and let $\tau = \tau(\rho)$ be the specification constant on $\CCC$. Now consider $(x_1,y_1,t_1),\ldots,(x_N,y_N,t_N)\in\CCC\x\CCC$ and an arbitrary collection of times $\tau_1,\ldots,\tau_{N-1}$ satisfying $\tau_i\geq \tau$ for all $i$. By the specification property for $\CCC$, there exist $x,y\in X$ such that setting $s_0 = \tau_0 = 0$ and $s_j$ as in the definition above, we have 
		$$f_{s_{j-1} + \tau_{j-1}}(x)\in B_{t_j}(x_j,\rho) \text{ and }f_{s_{j-1} + \tau_{j-1}}(y)\in B_{t_j}(y_j,\rho)$$
		for $j\in \{1,\ldots,N\}$. This implies that 
		$$(f\x f)_{s_{j-1} + \tau_{j-1}}(x,y)\in B_{t_j}(x_j,\rho)\x B_{t_j}(y_j,\rho) = B_{t_j}((x_j,y_j),\rho)$$
		for all $j\in\{1,\ldots,N\}$. Thus, $(x,y)$ is a point fulfilling the specification property.
	\end{proof}
Note that the weak version of specification considered in \cite{CT} in which we only ask for transition times that are bounded above by $\tau$ does not lift to the product.

		\begin{lem} \label{bowenprod}
		Suppose $\ph : X\to\R$ has the Bowen property at scale $\eps >0$ on $\CCC\subset X\x[0, \infty)$. Then $\Phi: X \x X \to \RR $ defined by $\Phi(x, y) = \ph(x)+\ph(y)$ has the Bowen property at scale $\eps$ on $\CCC\x\CCC$.
	\end{lem}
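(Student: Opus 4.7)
The plan is to unpack the definitions and split the integral for $\Phi$ over the product orbit into the two single-flow integrals, then apply the hypothesis on each piece separately.

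First I would fix $((x,y), t) \in \CCC \times \CCC$ and a point $(x',y') \in B_t((x,y), \eps)$ in the product metric $\tilde d$. As noted right after the definition of $\tilde d$, we have $B_t((x,y), \eps) = B_t(x, \eps) \times B_t(y, \eps)$, so $x' \in B_t(x, \eps)$ and $y' \in B_t(y, \eps)$. By definition of $\CCC \times \CCC$, both $(x,t)$ and $(y,t)$ lie in $\CCC$.

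Next I would compute, using linearity of the integral and the definition $\Phi(u,v) = \ph(u) + \ph(v)$,
\[
\int_0^t \bigl[\Phi((f_s\x f_s)(x,y)) - \Phi((f_s\x f_s)(x',y'))\bigr] ds
= \int_0^t \bigl[\ph(f_s x) - \ph(f_s x')\bigr] ds + \int_0^t \bigl[\ph(f_s y) - \ph(f_s y')\bigr] ds.
\]
Taking absolute values and applying the triangle inequality, each summand is bounded in absolute value by $V(\CCC, \ph, \eps)$ directly by the hypothesis that $\ph$ has the Bowen property at scale $\eps$ on $\CCC$. Therefore the left-hand side has absolute value at most $2 V(\CCC, \ph, \eps)$.

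Taking the supremum over all $((x,y),t) \in \CCC \times \CCC$ and all $(x',y') \in B_t((x,y),\eps)$ gives $V(\CCC \times \CCC, \Phi, \eps) \leq 2 V(\CCC, \ph, \eps) < \infty$, which is exactly the Bowen property for $\Phi$ at scale $\eps$ on $\CCC \times \CCC$. There is no real obstacle here; the only substantive point is the observation that a Bowen ball in the $\tilde d$ metric factors as a product of Bowen balls, which was recorded at the start of Section \ref{background}.
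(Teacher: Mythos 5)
Your proof is correct and follows essentially the same route as the paper: factor the product Bowen ball as $B_t(x,\eps)\x B_t(y,\eps)$, split the integral of $\Phi$ into the two single-flow integrals, and bound each by $V(\CCC,\ph,\eps)$ to get $V(\CCC\x\CCC,\Phi,\eps)\leq 2V(\CCC,\ph,\eps)<\infty$. No issues.
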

	\begin{proof}
		Let $((x,y),t)\in\CCC\x\CCC$ and let $(w,z)\in B_t((x,y),\eps)$. Observe that $$\abs{\Phi(f_sx,f_sy) - \Phi(f_sw,f_sz)} \leq \abs{\ph(f_sx) - \ph(f_sw)} + \abs{\ph(f_sy) - \ph(f_sz)}.$$
		Since $w\in B_t(x,\eps)$ and $z\in B_t(y,\eps)$, and $\ph$ has the Bowen property, this gives
		\[
		\abs{\int_{0}^{t} [\Phi(f_sx,f_sy) - \Phi(f_sw,f_sz)]\,ds} \leq V(\CCC,\ph,\eps) + V(\CCC,\ph,\eps) < \infty. \qedhere
		\] 
	\end{proof}
\begin{prop}\label{Addition}
	Let $\mathcal{C},\mathcal{D}\subset X\x [0, \infty)$. Let $\ph_1,\ph_2 : X\to\R$ be continuous, and let $\Phi(x,y) = \ph_1(x) + \ph_2(y)$. Then we have
	$$P(\mathcal{C}\x\mathcal{D},\Phi; \FFF \x \FFF) \leq P(\mathcal{C},\ph_1;\FFF) + P(\DDD,\ph_2;\FFF).$$
	Furthermore, if $\CCC = \DDD$ and $\ph_1 = \ph_2$, we get equality.
\end{prop}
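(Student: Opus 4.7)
The strategy is to establish an upper bound by partitioning a $(t,\eps)$-separated set in $(\CCC\x\DDD)_t$ along its first coordinate and exploiting the additive structure of $\Phi$, and then a matching lower bound (for the equality clause) by taking products of near-maximizing separated sets.

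For the upper bound, let $G \subset (\CCC\x\DDD)_t$ be $(t,\eps)$-separated and set $G_x := \{y : (x,y) \in G\}$ for $x \in \pi_1 G$. Each fiber $G_x$ is automatically $(t,\eps)$-separated in $\DDD_t$, since for any distinct $y_1, y_2 \in G_x$,
\[
\tilde d_t((x,y_1),(x,y_2)) = d_t(y_1,y_2) \geq \eps.
\]
Using $\Phi(x,y) = \ph_1(x) + \ph_2(y)$, the sum factors:
\[
\sum_{(x,y)\in G} e^{\int_0^t \Phi(f_sx,f_sy)\,ds} = \sum_{x\in \pi_1 G} e^{\int_0^t \ph_1(f_sx)\,ds} \sum_{y\in G_x} e^{\int_0^t \ph_2(f_sy)\,ds} \leq \Lambda_t(\DDD,\ph_2,\eps) \sum_{x\in \pi_1 G} e^{\int_0^t \ph_1(f_sx)\,ds}.
\]
The technical obstacle is that $\pi_1 G$ need not itself be $(t,\eps)$-separated in $\CCC_t$. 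My plan is to cluster the points of $\pi_1 G$ around a maximal $(t,\eps)$-separated subset of $\CCC_t$ and pass the outer sum through this cluster decomposition. The key structural observation is that whenever $x_1, x_2 \in \pi_1 G$ satisfy $d_t(x_1, x_2) < \eps$, the product-separation of $G$ forces $d_t(y_1, y_2) \geq \eps$ for every $y_1 \in G_{x_1}$ and $y_2 \in G_{x_2}$, so $G_{x_1}$ and $G_{x_2}$ are disjoint and their union remains $(t,\eps)$-separated in $\DDD_t$. This means each cluster's joint fiber-contribution is still majorized by $\Lambda_t(\DDD,\ph_2,\eps)$, and the $\ph_1$-weights on the cluster can be compared to the weight at the cluster's representative via uniform continuity. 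Taking $\limsup_{t\to\infty}\tfrac{1}{t}\log$ absorbs the continuity correction and yields the desired pressure inequality.

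For the matching lower bound, take near-maximizing $(t,\eps)$-separated sets $E_1 \subset \CCC_t$ and $E_2 \subset \DDD_t$ for $\Lambda_t(\CCC,\ph_1,\eps)$ and $\Lambda_t(\DDD,\ph_2,\eps)$ respectively. Then $E_1 \x E_2 \subset (\CCC\x\DDD)_t$ is $(t,\eps)$-separated in the product metric $\tilde d_t$, because any two distinct elements of $E_1 \x E_2$ differ in at least one coordinate by $\geq \eps$. The weighted sum factors as $\left(\sum_{E_1} e^{\int_0^t \ph_1}\right)\left(\sum_{E_2} e^{\int_0^t \ph_2}\right)$, and passing to $\limsup_{t\to\infty}\tfrac{1}{t}\log$ gives $P(\CCC\x\DDD,\Phi,\eps) \geq P(\CCC,\ph_1,\eps) + P(\DDD,\ph_2,\eps)$, producing equality in the symmetric case $\CCC=\DDD$, $\ph_1=\ph_2$. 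The main difficulty I expect is in the upper-bound step: making the clustering argument tight enough that the uniform-continuity correction for $\ph_1$ does not degrade the inequality at the fixed scale $\eps$.
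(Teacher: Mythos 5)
Your equality half is exactly the paper's argument (products of $(t,\eps)$-separated sets are $(t,\eps)$-separated in the max metric, and the weighted sums factor), with one slip: from $\Lambda_t(\CCC\x\DDD,\Phi,\eps)\geq\Lambda_t(\CCC,\ph_1,\eps)\,\Lambda_t(\DDD,\ph_2,\eps)$ you cannot deduce the general lower bound $P(\CCC\x\DDD,\Phi,\eps)\geq P(\CCC,\ph_1,\eps)+P(\DDD,\ph_2,\eps)$, because $\limsup$ is not superadditive when the two partition-sum sequences peak along different subsequences; this step is only valid (and only claimed in the paper) when $\CCC=\DDD$ and $\ph_1=\ph_2$, where the two sequences coincide.

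The genuine gap is in your upper bound, which does not close at the fixed scale $\eps$ asserted in the statement. If you cluster $\pi_1G$ around a maximal $(t,\eps)$-separated subset of $\CCC_t$ (so that the outer sum over representatives is controlled by $\Lambda_t(\CCC,\ph_1,\eps)$), two members of the same cluster are only within $2\eps$ of each other in $d_t$, so your key observation (which requires pairwise $d_t<\eps$) does not apply inside a cluster, and the union of fibers over a cluster need not be $(t,\eps)$-separated in $\DDD_t$; the per-cluster contribution is then not majorized by $\Lambda_t(\DDD,\ph_2,\eps)$. If instead you shrink the cluster radius to $\eps/2$ to restore pairwise separation, the representatives form only a $(t,\eps/2)$-separated set, so the outer sum is bounded by $\Lambda_t(\CCC,\ph_1,\eps/2)$, a finer scale than the statement permits. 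Moreover, the ``uniform-continuity correction'' for replacing $\int_0^t\ph_1(f_sx)\,ds$ by the integral at the representative is of size $t\cdot\Var(\ph_1,\delta)$, since the error accumulates along the whole orbit segment; after $\tfrac1t\log$ it contributes a non-vanishing constant $\Var(\ph_1,\delta)$, so it is not absorbed by the $\limsup$ at fixed scale. What your plan actually yields is an estimate like $P(\CCC\x\DDD,\Phi,\eps)\leq P(\CCC,\ph_1,\eps/2)+P(\DDD,\ph_2,\eps)+\Var(\ph_1,\eps/2)$, which gives the proposition only after $\eps\to 0$. (The difficulty is not merely technical: the maximal cardinality of an $\eps$-separated set in a max-metric product can strictly exceed the product of the maximal cardinalities, the phenomenon behind Shannon capacity, so some change of scale or of counting device is unavoidable.) The paper sidesteps all of this by proving the upper bound with spanning-set partition sums $\Lambda_t^{\text{span}}$, via the spanning characterization of pressure from \cite{CFT} and the argument of \cite[Theorem 9.8(v)]{Wa}: products of $(t,\eps)$-spanning sets are $(t,\eps)$-spanning for the product, the sums factor, and no variation term or scale change appears.
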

\begin{proof}
	
	For the inequality, we need the following characterization of pressure via spanning sets, which is proved in \cite{CFT}:
	$$P(\CCC,\ph;\FFF) = \lim\limits_{\eps\to 0}\limsup_{t\to\infty}\frac{1}{t}\log\Lambda_t^{\text{span}}(\CCC,\ph,\eps;\FFF),$$
	where $\Lambda_t^{\text{span}}(\CCC,\ph,\eps;\FFF)$ is defined similarly to $\Lambda_t(\CCC,\ph,\eps;\FFF)$, replacing $\sup$ with $\inf$ and separating sets with spanning sets. Using this, we follow the proof of the corresponding inequality in \cite[Theorem 9.8(v)]{Wa}. Observe that given two sets $E_1,E_2\subset X$, we have
	\begin{align*}
	&\sum_{(x_1,x_2)\in E_1\x E_2}\exp\int_{0}^{t}\Phi((f\x f)_s(x_1,x_2))\,ds 
	\\
	= &\left(\sum_{x_1\in E_1} \exp\int_{0}^{t}\ph_1(f_sx_1)\,ds\right)\left(\sum_{x_2\in E_2} \exp\int_{0}^{t}\ph_2(f_sx_2)\,ds\right).
	\end{align*}
	Now, if $E_t \subset \CCC_t$ and $F_t\subset \DDD_t$ are minimal $(t,\eps)$ spanning sets, then $E_t\x F_t$ is a $(t,\eps)$-spanning set for $(\CCC\x\DDD)_t$. It follows that 
	\begin{align*}
	P(\CCC\x\DDD,\Phi;\FFF\x \FFF) &= \lim\limits_{\eps\to 0}\limsup_{t\to\infty}\frac{1}{t}\log\Lambda_t^{\text{span}}(\CCC\x\DDD,\Phi,\eps;\FFF\x \FFF)
	\\
	&\leq \lim\limits_{\eps\to 0}\limsup_{t\to\infty}\frac{1}{t}\log\Lambda_t^{\text{span}}(\CCC,\ph_1,\eps;\FFF)\Lambda_t^{\text{span}}(\DDD,\ph_2,\eps;\FFF)
	\\
	&\leq P(\CCC,\ph_1;\FFF) + P(\DDD,\ph_2;\FFF).
	\end{align*}
	The reverse inequality does not hold in general, as $\limsup$ is not superadditive. However, if $\CCC = \DDD$ and $\ph_1 = \ph_2 = \ph$, the inequality does hold. Given a maximal $(t,\eps)$-separating set $E$ for $\CCC_t$, then $E\x E$ is a $(t,\eps)$-separating set for $\CCC_t\x\CCC_t$. Hence, 
	$$\Lambda_t(\CCC\x\CCC,\Phi,\eps;\FFF\x \FFF) \geq \Lambda_t(\CCC,\ph,\eps;\FFF)^2.$$
	Then we have that for all $\eps > 0$,
	\begin{align*}
	P(\CCC\x\CCC,\Phi,\eps;\FFF\x \FFF) &= \limsup_{t\to\infty}\frac{1}{t}\log \Lambda_t(\CCC\x\CCC,\Phi,\eps;\FFF\x \FFF)
	\\
	&\geq 2\limsup_{t\to\infty}\frac{1}{t}\log \Lambda_t(\CCC,\ph,\eps;\FFF) = 2P(\CCC,\ph,\eps;\FFF). \qedhere
	\end{align*}
\end{proof}

\subsection{$\lambda$-decompositions} \label{lambdaestimate}
Recall Definition \ref{def.decomp} of a \emph{decomposition} of the space of orbit segments. We are interested in decompositions where $\GGG$ has specification and the Bowen property, and the pressure of $\PPP, \SSS$ is less than the whole space. Given a decomposition $(\PPP, \GGG, \SSS)$ for $(X, \FFF)$, we need to find a decomposition for the product system $(X \times X, \FFF \times \FFF)$ with nice properties. We make the following definition.
\begin{defn}
	Let $X$ be a compact metric space, $\FFF : X\to X$ a continuous flow, and $\ph : X\to\R$ a continuous potential. Let $\lambda : X\to[0,\infty)$ be a bounded lower semicontinuous function and $\eta>0$.  Let $B(\eta) = \{(x,t)\mid \frac{1}{t}\int_{0}^{t}\lambda(f_s(x))\,ds < \eta\}$ and $$\GGG(\eta) = \{(x,t)\mid \frac{1}{\rho}\int_{0}^{\rho}\lambda(f_s(x))\,ds\geq \eta \text{ and }\frac{1}{\rho}\int_{0}^{\rho}\lambda(f_{-s}f_t(x))\,ds\geq \eta \text{ for }\rho \in [0, t] \}.$$
Let $\PPP = \SSS = B(\eta)$, and let $\GGG = \GGG(\eta)$.	We define a decomposition $(\PPP, \GGG, \SSS)$ as follows. Given an orbit segment $(x,t)\in X\x[0, \infty)$, we decompose $(x,t)$ by taking the longest initial segment in $\PPP$ as the prefix, and the longest terminal segment which lies in $\SSS$ as the suffix. The good core is what is left over.  We say that a decomposition $(\PPP, \GGG, \SSS)$ defined in this way is a \emph{$\lambda$-decomposition} (with constant $\eta$).
\end{defn}
We ask that the function $\lambda$ is bounded and lower semi-continuous  since this allows both continuous functions as well as indicator functions of open sets. The decompositions used to study rank one geodesic flow in \cite{BCFT} are $\lambda$-decompositions, using the continuous function $\lambda$ defined in  \S \ref{geoflow}. The decompositions used in \cite{CFT2, CFT} to study equilibrium states for the Ma\~n\'e and Bonatti-Viana classes of DA systems can be taken to be $\lambda$-decompositions, where $\lambda$ is the indicator function of the complement of the small closed ball(s) where the original Anosov dynamics where perturbed. We note that the decompositions that were used to study $\beta$-shifts and $S$-gap shifts in \cite{CT} are defined combinatorially and are not $\lambda$-decompositions.

We describe pressure estimates for a $\lambda$-decomposition. Recall that given $\CCC\subset X\times [0, \infty)$, the collection $[\CCC]$ is  given by:
\[
[\CCC] := \{(x,n) \in X\times \mathbb N \mid (f_{-s}x, n+s+t) \in \CCC \text{ for some }s,t\in [0,1]\}.
\]
For a general decomposition, it is formally necessary to consider collections $[\PPP], [\SSS]$ in place of $\PPP, \SSS$ at a technical stage of the proof in \cite{CT} where a summation argument on growth of partition sums is required. However, for a $\lambda$-decomposition,  this distinction does not matter due to the following lemma.

\begin{lem}\label{BracketRem}
For all $\eps > 0$, $P([B(\eta)],\ph)\leq P(B(\eta + \eps),\ph)$.
\end{lem}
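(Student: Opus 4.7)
The plan is to unpack the definition of $[B(\eta)]$ and show that for $n$ sufficiently large, $[B(\eta)]_n \subseteq B(\eta+\eps)_n$. Since pressure is governed by the asymptotic growth rate, the finite number of small-$n$ exceptions will not matter.

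First I would fix $(x,n) \in [B(\eta)]$. By definition there exist $s, t \in [0,1]$ with $(f_{-s}x, n+s+t) \in B(\eta)$, i.e.
\[
\int_{-s}^{n+t} \lambda(f_r x)\,dr \;=\; \int_0^{n+s+t}\lambda(f_r(f_{-s}x))\,dr \;\leq\; \eta(n+s+t).
\]
Because $\lambda \geq 0$, the integral over $[0,n]$ is bounded above by the integral over $[-s,n+t]$, so
\[
\frac{1}{n}\int_0^n \lambda(f_r x)\,dr \;\leq\; \frac{\eta(n+s+t)}{n} \;\leq\; \eta\Bigl(1 + \tfrac{2}{n}\Bigr).
\]
Choose $N \in \N$ with $2\eta / N < \eps$. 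Then for every $n \geq N$, the right-hand side is at most $\eta+\eps$, showing $(x,n)\in B(\eta+\eps)$. Hence $[B(\eta)]_n \subseteq B(\eta+\eps)_n$ for all $n \geq N$.

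The pressure estimate is now immediate from monotonicity of $\Lambda_n$ in the first argument: for any $\delta > 0$ and $n \geq N$,
\[
\Lambda_n([B(\eta)],\ph,\delta) \;\leq\; \Lambda_n(B(\eta+\eps),\ph,\delta),
\]
because any $(n,\delta)$-separated subset of $[B(\eta)]_n$ is a $(n,\delta)$-separated subset of $B(\eta+\eps)_n$. Since $[B(\eta)]_t$ is empty unless $t\in\N$, the limsup defining $P([B(\eta)],\ph,\delta)$ may be taken over $n\in\N$, and the finitely many values $n < N$ do not affect the limsup. Thus
\[
P([B(\eta)],\ph,\delta) \;\leq\; \limsup_{n \to \infty}\frac{1}{n}\log \Lambda_n(B(\eta+\eps),\ph,\delta) \;\leq\; P(B(\eta+\eps),\ph,\delta).
\]
Letting $\delta \to 0$ yields the claimed inequality. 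There is no serious obstacle here; the only point to verify is the innocuous comparison of time averages for the thickened window $[-s, n+t]$ versus $[0,n]$, which is handled by the non-negativity of $\lambda$.
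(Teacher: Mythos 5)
Your proof is correct and follows essentially the same route as the paper: use non-negativity of $\lambda$ to compare the average over $[0,n]$ with the average over the thickened window $[-s,n+t]$, obtaining $[B(\eta)]_n \subset B(\eta+\eps)_n$ for all large $n$, from which the pressure inequality follows. The only difference is that you spell out the final partition-sum comparison, which the paper leaves implicit.
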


\begin{proof}
Let $\eta > 0$. For all $n\in\N$, we show that if $(x,n)\in [B(\eta)]$, then $(x,n)\in B((\frac{n+2}{n})\eta)$.
Observe that if $(x,n)\in [B(\eta)]$, then there exist $t,s\in [0,1]$ such that 
$$(n+t+s)\eta > \int_{-s}^{n+t} \lambda(f_rx)\,dr = \int_{-s}^0\lambda(f_rx)\,dr + \int_{0}^{n}\lambda(f_rx)\,dr + \int_n^{n+t}\lambda(f_rx)\,dr.$$
Since $\lambda \geq 0$, we see that
$\frac{1}{n}\int_{0}^{n}\lambda(f_rx)\,dr < \frac{(n+t+s)\eta}{n} \leq \left(\frac{n+2}{n}\right)\eta.$
Thus, given $\eps>0$ and any large $n$, we have $[B(\eta)]_n \subset B(\eta + \eps)_n$. The pressure estimate follows.
\end{proof}

We have the following pressure estimates for $\lambda$-decompositions.
\begin{thm}\label{Pressure Decrease}
	If the entropy map is upper semicontinuous, then	
	\[
	\lim\limits_{\eta\to 0}P(B(\eta),\ph) \leq \sup_{\mu\in\MMM(X,\FFF)} \{ P_\mu(\ph) : \int \lambda\,d\mu=0\}. 
	\]
\end{thm}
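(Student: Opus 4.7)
The plan is to combine the measure-theoretic pressure estimate (Proposition \ref{bcftest}) with a weak-$*$ limit argument, exploiting that $\lambda$ is bounded and lower semicontinuous so that $\nu \mapsto \int \lambda\,d\nu$ is lower semicontinuous on $\MMM(X,\FFF)$.

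First, for each $\eta>0$, I invoke Proposition \ref{bcftest} to select $\mu_\eta \in \MMM(B(\eta))$ with
$P_{\mu_\eta}(\ph) \geq P(B(\eta),\ph) - \eta$ (avoiding any attainment question). The next step is to show $\int \lambda\,d\mu_\eta \leq \eta$. For any $(x,t)\in B(\eta)$, the empirical measure satisfies $\int\lambda\,d\SSS_{(x,t)} = \frac{1}{t}\int_0^t \lambda(f_sx)\,ds \leq \eta$ by definition of $B(\eta)$, and taking convex combinations gives $\int \lambda\,d\nu \leq \eta$ for every $\nu \in \MMM_t(B(\eta))$. Since $\lambda$ is bounded and lower semicontinuous, $\nu \mapsto \int \lambda\,d\nu$ is weak-$*$ lower semicontinuous on $\MMM(X,\FFF)$, so the bound passes to any weak-$*$ accumulation point, giving $\int \lambda\,d\mu_\eta \leq \eta$.

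Next, by weak-$*$ compactness of $\MMM(X,\FFF)$, extract a subsequence $\eta_j \to 0$ with $\mu_{\eta_j} \to \mu^* \in \MMM(X,\FFF)$. Applying lower semicontinuity once more,
\[
\int \lambda\,d\mu^* \leq \liminf_{j\to\infty}\int\lambda\,d\mu_{\eta_j} \leq \liminf_{j\to\infty} \eta_j = 0,
\]
and since $\lambda \geq 0$, we obtain $\int \lambda\,d\mu^* = 0$. Upper semicontinuity of the entropy map together with continuity of $\ph$ makes $\mu \mapsto P_\mu(\ph)$ upper semicontinuous, so
\[
P_{\mu^*}(\ph) \geq \limsup_{j\to\infty} P_{\mu_{\eta_j}}(\ph) \geq \limsup_{j\to\infty}\bigl(P(B(\eta_j),\ph) - \eta_j\bigr) = \lim_{\eta\to 0}P(B(\eta),\ph),
\]
where the final equality uses monotonicity: $B(\eta')\subset B(\eta)$ when $\eta'\leq\eta$, so $P(B(\eta),\ph)$ is monotone in $\eta$ and the limit exists as an infimum. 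Since $\mu^*$ belongs to the feasible set $\{\mu : \int \lambda\,d\mu = 0\}$, we conclude $\lim_{\eta \to 0} P(B(\eta),\ph) \leq P_{\mu^*}(\ph) \leq \sup\{P_\mu(\ph) : \int \lambda\,d\mu = 0\}$.

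The main subtlety is getting the semicontinuity directions right: $\lambda$ is only lower semicontinuous (not continuous), so one cannot use ordinary weak-$*$ continuity of $\nu\mapsto\int\lambda\,d\nu$. Fortunately, lower semicontinuity combined with $\lambda \geq 0$ is exactly what is needed to push the bound $\int \lambda\,d\mu_\eta \leq \eta$ through the weak-$*$ limit down to $\int \lambda\,d\mu^* = 0$; the upper semicontinuity of entropy then plays the symmetric role on the pressure side. No other hypothesis (Bowen property, specification, etc.) is required for this particular estimate.
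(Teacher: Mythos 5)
Your proof is correct and is essentially the paper's argument: both rest on Proposition \ref{bcftest}, the empirical-measure computation showing that every $\mu\in\MMM(B(\eta))$ satisfies $\int\lambda\,d\mu\leq\eta$ (via weak-$*$ lower semicontinuity of $\nu\mapsto\int\lambda\,d\nu$ for bounded lower semicontinuous $\lambda$), and upper semicontinuity of $\mu\mapsto P_\mu(\ph)$ to handle $\eta\to 0$. The only cosmetic difference is the final limiting step --- you extract near-maximizing measures $\mu_{\eta_j}$ and a weak-$*$ convergent subsequence, whereas the paper argues uniformly on the nested compact sets $\MMM_\lambda(\eta)\supset\MMM_\lambda(0)$ --- and, like the paper, you leave the degenerate case where $\MMM(B(\eta))=\emptyset$ (both sides $-\infty$) to a side remark.
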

The interesting case in the above theorem is when $\{ \mu : \int\lambda\,d\mu = 0\} \neq \emptyset$. This can only fail if $B(\eta)$ has orbit segments of bounded length for small enough $\eta$. In that case, the inequality still holds, interpreting both sides as $-\infty$.
\begin{proof}
	We assume $\{ \mu : \int\lambda\,d\mu = 0\} \neq \emptyset$. For all $\eta \geq 0$, define $\MMM_\lambda(\eta) = \{\mu\in \MMM_\FFF(X)\mid \int \lambda\,d\mu \leq \eta\}$. Recalling Definition \ref{empirical}, we claim that for all $\mu\in \MMM(B(\eta))$, we have $\int \lambda\,d\mu \leq \eta$. First, consider an arbitrary empirical measure $\s{E}_{(x,t)}$, where $(x,t)\in B(\eta)$. We have
	$\int\lambda\,d\s{E}_{(x,t)} = \frac{1}{t}\int_{0}^{t}\lambda(f_sv)\,ds \leq \eta.$ For any convex combination of such measures, $\mu_t$, it follows that $\int\lambda\,d\mu_t \leq \eta$. 
	
	Therefore, for any sequence of measures $(\mu_{t_k})$ that converges to $\mu\in \MMM(B(\eta))$, by lower semicontinuity of $\lambda$, we have that 
	$\int\lambda\,d\mu \leq  \liminf \int\lambda\,d\mu_{t_k} \leq \eta.$
	Hence, we have shown that $\MMM(B(\eta))\subset \MMM_\lambda(\eta)$. Therefore, by Proposition \ref{bcftest}, we have shown that 
	$$P(B(\eta),\ph)\leq \sup_{\mu\in \MMM(B(\eta))}P_\mu(\ph)\leq \sup_{\mu\in \MMM_\lambda(\eta)}P_\mu(\ph).$$
	Additionally, this proof shows that for all $\eta$, we have $\MMM_\lambda(\eta)$ is compact.

	Now, observe that $\MMM_\lambda(0) = \bigcap_{\eta > 0}\MMM_\lambda(\eta)$ and let $\eps > 0$. By compactness and upper semicontinuity of the entropy map, for sufficiently small $\eta$, we have that $P_\mu(\ph)\leq P_\nu(\ph) + \eps$ for all $\mu\in \MMM_\lambda(\eta)$ and $\nu\in \MMM_\lambda(0)$. Thus for sufficiently small $\eta$, we have
$P(B(\eta),\ph) \leq \sup_{\mu\in \MMM_\lambda(\eta)}P_\mu(\ph) \leq \sup_{\mu\in \MMM_\lambda(0)}P_\mu(\ph) + \eps$. \qedhere
\end{proof}
By Lemma \ref{BracketRem}, it thus follows that $\lim\limits_{\eta\to 0}P([B(\eta)],\ph) \leq \sup \{ P_\mu(\ph) : \int \lambda =0\}.$

\subsection{Products of $\lambda$-decompositions} We want to find a decomposition for a product system $(X\x X,\FFF\x \FFF)$. When $(\PPP, \GGG, \SSS)$ is a $\lambda$-decomposition for $(X, \FFF)$, we are able to find a related decomposition on the product system as follows. We define $\tilde \lambda : X \x X \to [0, \infty)$ by
\[
\tilde{\lambda}(x,y) = \lambda(x)\lambda(y).
\]
This function inherits boundedness and lower semicontinuity from $\lambda$, and we consider a $\tilde \lambda$-decomposition for $(X\x X,\FFF\x \FFF)$. That is, for  $\eta>0$, we let 
\[
\tilde B(\eta) = \{((x,y),t)\mid \frac{1}{t}\int_{0}^{t}\tilde \lambda(f_sx, f_sy)\,ds < \eta\},
\] 
and we let $\tilde \GGG(\eta)$ be the set of orbit segments $((x,y),t)$ such that
\[
\frac{1}{\rho} \int_{0}^{\rho}\tilde \lambda(f_sx, f_sy)\,ds\geq \eta, \frac{1}{\rho}\int_{0}^{\rho}\tilde \lambda(f_{t-s}x, f_{t-s}y)\,ds\geq  \eta \text{ for all }\rho \in [0, t].
\]
The collections $\tilde \PPP = \tilde \SSS = \tilde B(\eta)$, and $\tilde \GGG = \tilde \GGG(\eta)$ define a $\tilde \lambda$-decomposition $(\tilde \PPP, \tilde \GGG, \tilde \SSS)$ for $(X\x X,\FFF\x \FFF)$.
		\begin{lem} \label{subsetprod}
			Let $M\in[0, \infty)$. For $0\leq \eta \leq 1$, let $(\PPP, \GGG, \SSS)$ and $(\tilde \PPP, \tilde \GGG, \tilde \SSS)$ be the $\lambda$-decomposition with constant $\eta\norm{\lambda}^{-1}$ and $\tilde \lambda$-decomposition with constant $\eta$  respectively. Then 
			 $\tilde \GGG^M \subset \GGG^M \x\GGG^M$. 
		\end{lem}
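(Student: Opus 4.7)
The plan is to argue by contrapositive: assume $((x,y),t)\in\tilde\GGG^M$, meaning both the $\tilde\lambda$-prefix $\tilde p((x,y),t)$ and the $\tilde\lambda$-suffix $\tilde s((x,y),t)$ are at most $M$, and deduce $(x,t),(y,t)\in\GGG^M$. Since $\tilde\lambda$ is symmetric in its two arguments, it suffices to show $p(x,t)\leq M$ and $s(x,t)\leq M$; the suffix case reduces to the prefix case by reversing the orbit segment (i.e. replacing $(x,\rho)$ with $(f_{t-\rho}x,\rho)$ and likewise in the product).

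For the prefix, suppose for contradiction that $p(x,t)>M$. By the definition of the $\lambda$-decomposition, this provides some $\rho\in(M,t]$ with $(x,\rho)\in B(\eta)=\PPP$, i.e.
\[
\frac{1}{\rho}\int_0^\rho \lambda(f_s x)\,ds \leq \eta.
\]
The key observation is the pointwise product bound $\tilde\lambda(u,v)=\lambda(u)\lambda(v)\leq \norm{\lambda}\lambda(u)$, which lifts $B(\eta)$-membership in one factor to $\tilde B$-membership for the product:
\[
\frac{1}{\rho}\int_0^\rho \tilde\lambda(f_s x,f_s y)\,ds \;\leq\; \norm{\lambda}\cdot\frac{1}{\rho}\int_0^\rho \lambda(f_s x)\,ds \;\leq\; \eta\norm{\lambda}.
\]
The constant $\eta\norm{\lambda}^{-1}$ in the $\tilde\lambda$-decomposition is calibrated against this estimate so that $((x,y),\rho)\in \tilde B=\tilde\PPP$. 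Hence $\tilde p((x,y),t)\geq\rho>M$, contradicting $((x,y),t)\in\tilde\GGG^M$. The suffix case runs identically after time-reversal.

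There is no serious obstacle: the entire argument is an unpacking of the definitions combined with the elementary pointwise bound on $\tilde\lambda$. The only point that deserves care is the matching of scales between the two decomposition constants, which is exactly the role of the factor $\norm{\lambda}^{-1}$ in the statement; the pointwise estimate loses one factor of $\norm{\lambda}$, and this must be absorbed into the choice of constant governing the product $\tilde B$ so that the implication $(x,\rho)\in B(\eta)\Rightarrow((x,y),\rho)\in\tilde B$ holds. Once this bookkeeping is in place, the inclusion $\tilde\GGG^M\subset \GGG^M\x\GGG^M$ is immediate from the contrapositive argument above.
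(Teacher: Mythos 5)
Your overall strategy -- arguing by contrapositive that a long initial segment of $(x,t)$ in the bad collection would lift to a long initial segment of $((x,y),t)$ in the product bad collection, contradicting $((x,y),t)\in\tilde\GGG^M$, with the suffix handled by time reversal -- is the right one. But the calibration step is wrong as written, and it is the heart of the matter. From $(x,\rho)\in B(\eta)$ your display gives
\[
\frac{1}{\rho}\int_0^\rho\tilde\lambda(f_sx,f_sy)\,ds\;\le\;\norm{\lambda}\cdot\frac{1}{\rho}\int_0^\rho\lambda(f_sx)\,ds\;\le\;\eta\norm{\lambda},
\]
whereas membership in $\tilde\PPP=\tilde B(\eta\norm{\lambda}^{-1})$ requires this average to be at most $\eta\norm{\lambda}^{-1}$, which is smaller than your bound by a factor $\norm{\lambda}^{2}$ whenever $\norm{\lambda}>1$. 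So the asserted conclusion $((x,y),\rho)\in\tilde B$ does not follow: the pointwise bound $\tilde\lambda\le\norm{\lambda}\,\lambda$ lifts $B(\eta_1)$-membership to $\tilde B(\eta_1\norm{\lambda})$-membership, so your contrapositive needs the product constant to be $\norm{\lambda}$ \emph{times} the factor constant, not $\norm{\lambda}^{-1}$ times it. Indeed, with the constants exactly as you (and the statement) fixed them and $\norm{\lambda}>1$, the inclusion can genuinely fail: if $\lambda$ is essentially constant equal to some $c\in(\eta\norm{\lambda}^{-2},\eta]$ along the orbit of $x$ and essentially equal to $\norm{\lambda}$ along the orbit of $y$, then $((x,y),t)$ has trivial prefix and suffix for the $\tilde\lambda$-decomposition at scale $\eta\norm{\lambda}^{-1}$, while the entire segment $(x,t)$ lies in $B(\eta)$, so its $\lambda$-prefix has length $t>M$.

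For context, the paper's own proof works with the opposite assignment of constants: it takes $((x,y),t)\in\tilde\GGG^M(\eta)$ and shows $(x,t)\in\GGG^M(\eta\norm{\lambda}^{-1})$, using maximality of the product prefix $m_1$ to get that the averages of $\tilde\lambda$ past $m_1$ exceed $\eta$, and then $\lambda\ge\tilde\lambda/\norm{\lambda}$; the wording of the lemma evidently swaps the two constants relative to what the proof establishes. Once you assign the factor decomposition the constant $\eta\norm{\lambda}^{-1}$ and the product decomposition the constant $\eta$ (or, in general, make the product constant $\norm{\lambda}$ times the factor constant), your contrapositive goes through verbatim: $(x,\rho)\in B(\eta\norm{\lambda}^{-1})$ gives $\frac{1}{\rho}\int_0^\rho\tilde\lambda\le\eta$, hence $((x,y),\rho)\in\tilde B(\eta)$ and the product prefix is at least $\rho>M$, a contradiction. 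That corrected version is essentially the same argument as the paper's, phrased through maximality of the prefix rather than through lower bounds on forward averages; so fix the direction of the calibration and the proof is complete.
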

		\begin{proof}
			Let $((x,y),t)\in \tilde \GGG^M= \tilde \GGG^M(\eta)$, with a prefix of length $m_1$ and a suffix of length $m_2$. We show that $(x,t)\in \GGG^M = \GGG^M(\eta \norm{\lambda}^{-1})$. To do this, we need to show that the prefix in the $\lambda$-decomposition of $(x,t)$ has length at most $m_1$, and the suffix has length at most $m_2$.  Observe that for all $0\leq r_1 < r_2 \leq t$ we have
			$$\int_{r_1}^{r_2}\lambda(f_sx)\,ds \geq \int_{r_1}^{r_2}\lambda(f_sx)\frac{\lambda(f_sy)}{\norm{\lambda}}\,ds = \frac{1}{\norm{\lambda}}\int_{r_1}^{r_2}\tilde{\lambda}(f_sx, f_s y)\,ds.$$
		
			Therefore, we see that for all $r > m_1$, we have
			$$\frac{1}{r}\int_{0}^{r}\lambda(f_sx)\,ds \geq \frac{1}{\norm{\lambda}}\frac{1}{r}\int_{0}^{r}\tilde{\lambda}(f_s x,f_s y)\,ds \geq \frac{\eta}{\norm{\lambda}}.$$
			Hence, the prefix of $(x,t)$ is of length at most $m_1$. A similar proof shows that the suffix is of length at most $m_2$. The same argument applies to $(y,t)$ and so we conclude that 
			$((x,y),t)\in \GGG^M \times \GGG^M$.
		\end{proof}
		The following corollary is immediate from applying Lemmas \ref{specprod} and \ref{bowenprod}.
		\begin{cor} \label{specbowlift}
		Let $(\PPP, \GGG, \SSS)$ and $(\tilde \PPP, \tilde \GGG, \tilde \SSS)$ be as above. If $\GGG^M$ has specification, then so does $\tilde \GGG^M$. If $\GGG$ has the Bowen property for a function $\varphi$, then $\tilde \GGG$ has the Bowen property for $\Phi(x,y) = \ph(x)+ \ph(y)$.
		\end{cor}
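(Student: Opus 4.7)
The overall plan is to chain together the two product-lifting lemmas, Lemma \ref{specprod} and Lemma \ref{bowenprod}, with the subset inclusion established in Lemma \ref{subsetprod}. Both the specification property and the Bowen property are monotone under inclusion of collections of orbit segments, so it suffices to exhibit $\tilde\GGG^M$ (respectively $\tilde\GGG$) as a sub-collection of a product collection to which the relevant product lemma already applies.

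For the specification claim, I would first note that if $\CCC' \subset \CCC$ and $\CCC$ has specification at scale $\rho$ with transition-time constant $\tau(\rho)$, then $\CCC'$ automatically has specification with the same constant: given any finite family of segments in $\CCC'$ and any admissible gap times, the shadowing point produced by the specification property of $\CCC$ serves equally well for the sub-family. Now by Lemma \ref{specprod}, the specification of $\GGG^M$ for $\FFF$ lifts to specification of $\GGG^M \times \GGG^M$ for $\FFF\times\FFF$. By Lemma \ref{subsetprod}, $\tilde\GGG^M \subset \GGG^M \times \GGG^M$, so $\tilde\GGG^M$ inherits specification.

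For the Bowen claim, the analogous monotonicity is even simpler: the Bowen constant $V(\CCC,\ph,\eps)$ is monotone under subset inclusion, so any sub-collection of a Bowen collection is Bowen with constant at most as large. Lemma \ref{bowenprod} gives that $\Phi(x,y)=\ph(x)+\ph(y)$ has the Bowen property on $\GGG\times\GGG$ at the same scale $\eps$ at which $\ph$ has the Bowen property on $\GGG$. To conclude, I need the inclusion $\tilde\GGG \subset \GGG\times\GGG$, which is not stated separately in Lemma \ref{subsetprod} but is exactly what the argument there establishes in the $M=0$ case: if $((x,y),t) \in \tilde\GGG$ has zero prefix and suffix in the $\tilde\lambda$-decomposition, then the inequality $\tilde\lambda(f_sx,f_sy) \leq \|\lambda\|\,\lambda(f_sx)$ forces zero prefix and suffix for $(x,t)$ in the $\lambda$-decomposition as well, and symmetrically for $(y,t)$. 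Hence $\tilde\GGG \subset \GGG\times\GGG$ and the Bowen property for $\Phi$ transfers.

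I do not expect a serious obstacle; the only point worth double-checking is the constant bookkeeping in the inclusion $\tilde\GGG \subset \GGG\times\GGG$, where one uses the same comparison $\tilde\lambda \leq \|\lambda\|\,\lambda$ on each coordinate that powered Lemma \ref{subsetprod}. With that inclusion in hand, both parts of the corollary follow immediately from the cited lemmas together with monotonicity of specification and of the Bowen constant under sub-collections, which is why the paper records this as immediate.
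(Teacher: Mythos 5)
Your argument is correct and is essentially the paper's own (implicit) proof: the corollary is recorded as immediate precisely because specification and the Bowen property are inherited by sub-collections, and Lemmas \ref{specprod}, \ref{bowenprod} and \ref{subsetprod} supply the rest. One small precision: the inclusion $\tilde\GGG\subset\GGG\x\GGG$ is best obtained not via a ``$M=0$ case'' of Lemma \ref{subsetprod} (membership in $\tilde\GGG$ is defined by the averaged lower bounds, not by vanishing prefix/suffix, and $\GGG^0$ can differ from $\GGG$ in boundary-equality cases) but directly from the same coordinatewise estimate $\tilde\lambda(f_sx,f_sy)\leq\norm{\lambda}\,\lambda(f_sx)$ you invoke, which converts the defining conditions of $\tilde\GGG(\eta)$ into those of $\GGG(\eta\norm{\lambda}^{-1})$ in each factor.
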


		\section{almost expansive and product expansive measures} \label{prodalmost} 
	In this section, we introduce a new notion called product expansivity  that plays a crucial role in our proofs. We collect approximation and counting properties for almost expansive and product expansive measures.

\subsection{Approximation lemma for almost expansive measures}
Recall that for a flow $(X, \FFF)$, the non-expansive set $\NE(\eps)$ was defined in \S \ref{pressure}, and a measure $\nu \in \MMM(X , \FFF )$ is \emph{almost expansive at scale $\eps$} if $\nu(\NE(\eps)) = 0$. Given  $t > 0$, we write $f_{[-t,t]}A = \{f_sA\mid s\in [-t,t]\}$  for a  flow-out of a set. Given a $(t,\eps)$-separated set $E$ of maximal cardinality, a partition $\s{A}$ is \emph{adapted} to $E$ if for all $A\in\s{A}$, there exists $x\in E$ such that $B_t(x,\frac{\eps}{2}) \subset A \subset \overline{B_t}(x,\eps)$. The following proposition 
has a similar spirit to \cite[Lemma 2]{Bow74}.
	\begin{prop}\label{InvariantApprox}
		Let $\FFF$ be a continuous flow on a compact metric space $X$, and suppose $\nu\in M_\FFF(X)$ is almost expansive at scale $\eps$. Let $s > 0$. Let $\alpha > 0$. Then for sufficiently small $\rho \in (0, \eps/2)$,  the following holds true. Let $\s{A}_t$ be an adapted partition for a $(t,\rho)$-separated set of maximal cardinality. Let $A\subset X$ be a positive measure set. Then for each $\kappa > 0$, there exists $t_0$ such that if $t\geq t_0$, then we can find $U\subset\s{A}_t$ such that $\nu(f_{t/2}U\setminus f_{[-3s,3s]}A) < \kappa$ and $\nu(A\setminus f_{t/2}U) < \alpha$.
	\end{prop}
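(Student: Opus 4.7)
The plan is to use almost expansivity to extract a compact ``good'' set on which two-sided Bowen tubes collapse onto flow arcs, and then select $U$ as those partition elements whose $t/2$-flow meets $A$ in this good set. The overflow bound then follows from a uniform geometric estimate plus an inner-regularity reduction, and the coverage bound is immediate from the fact that $f_{t/2}\s{A}_t$ is still a partition of $X$.

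\emph{Step 1 (Good set and uniform shrinking).} Since $\nu(\NE(\eps))=0$, the increasing family $X^\eps_{s'}:=\{y:\Gamma_\eps(y)\subset f_{[-s',s']}y\}$ exhausts $X$ up to a $\nu$-null set as $s'\to\infty$. Taking $s$ large enough so that $\nu(X^\eps_s)>1-\alpha/4$, inner regularity yields a compact $K\subset X^\eps_s$ with $\nu(K)>1-\alpha/2$. The key technical step is a uniform shrinking estimate: for each $\sigma>0$, for all sufficiently small $\rho\in(0,\eps/2)$, there exists $T_0=T_0(\sigma)$ such that
\[
\overline{B_{[-T,T]}}(y,2\rho)\subset N_\sigma\bigl(f_{[-s,s]}y\bigr)\quad\text{for all } y\in K,\ T\geq T_0,
\]
where $N_\sigma$ denotes the $\sigma$-neighborhood in $X$. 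This is proved by compactness-contradiction: otherwise extract $y_n\to y\in K$ and $w_n\to w$ with $w\in \bigcap_T B_{[-T,T]}(y,2\rho)=\Gamma_{2\rho}(y)\subset\Gamma_\eps(y)\subset f_{[-s,s]}y$, contradicting $w_n\notin N_\sigma(f_{[-s,s]}y_n)$ via Hausdorff-continuity of $y\mapsto f_{[-s,s]}y$.

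\emph{Step 2 (Choice of $U$ and verification).} Given $\kappa>0$, pick a compact $A^*\subset A$ with $\nu(A\setminus A^*)<\alpha/2$; then $f_{[-3s,3s]}A^*$ is compact, and we may choose $\sigma>0$ small enough that $\nu(N_\sigma(f_{[-3s,3s]}A^*)\setminus f_{[-3s,3s]}A^*)<\kappa$. Set $t_0:=2T_0(\sigma)$ and define
\[
U:=\{A'\in\s{A}_t:f_{t/2}A'\cap A^*\cap K\neq\emptyset\}.
\]
For $A'\in U$, pick $z\in f_{t/2}A'\cap A^*\cap K$. The adapted-partition containment $A'\subset\overline{B_t}(x,\rho)$ gives $f_{t/2}A'\subset\overline{B_{[-t/2,t/2]}}(f_{t/2}x,\rho)$, so any $w\in f_{t/2}A'$ satisfies $w\in\overline{B_{[-t/2,t/2]}}(z,2\rho)$. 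For $t\geq t_0$, Step 1 applied at $z\in K$ gives $w\in N_\sigma(f_{[-s,s]}z)\subset N_\sigma(f_{[-3s,3s]}A^*)$. Hence $\nu(f_{t/2}U\setminus f_{[-3s,3s]}A)\leq\nu(N_\sigma(f_{[-3s,3s]}A^*)\setminus f_{[-3s,3s]}A^*)<\kappa$, using $A^*\subset A$. For coverage, every $a\in A^*\cap K$ lies in the unique partition element $f_{t/2}A'\ni a$, which must belong to $U$; therefore $A\setminus f_{t/2}U\subset(A\setminus A^*)\cup K^c$ has measure less than $\alpha$.

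\emph{Main obstacle.} The delicate point is the uniform shrinking in Step 1, which promotes the pointwise almost-expansivity $\Gamma_\eps(y)\subset f_{[-s,s]}y$ into a quantitative neighborhood containment uniform over the compact $K$. Because Bowen tubes are genuinely thick while the flow arc is one-dimensional, one only obtains containment in a Hausdorff $\sigma$-neighborhood, never directly inside a flow-out of $A$. The buffer from $s$ to $3s$ in the conclusion, combined with an inner-regularity reduction to a compact approximation $A^*\subset A$, provides exactly the room needed to convert this Hausdorff containment into the desired $\nu$-measure bound.
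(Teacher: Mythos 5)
There is a genuine gap, and it occurs at the very first line of your Step 1: you ``take $s$ large enough so that $\nu(X^\eps_s)>1-\alpha/4$.'' But $s>0$ is \emph{given} in the statement, and the conclusion is about $f_{[-3s,3s]}A$ for that same $s$; you are not free to enlarge it. This matters: in the application (the weak mixing argument) the proposition is invoked with $s=\beta/3$ small, precisely so that $f_{[-3s,3s]}\ph^{-1}(B)$ stays disjoint from its translate, so a version of the proposition in which $s$ is chosen large is useless there. Worse, for a fixed small $s$ the set $X^\eps_s=\{y:\Gamma_\eps(y)\subset f_{[-s,s]}y\}$ at the \emph{fixed} scale $\eps$ can have measure far below $1-\alpha/4$, and can even be $\nu$-null: for instance for the geodesic flow in the metric \eqref{eqn:dK} one has $g_u v\in\Gamma_\eps(v)$ for all $|u|<\eps$, so if $s<\eps$ then no non-periodic point lies in $X^\eps_s$. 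So the quantity you need in Step 1 simply need not be available at scale $\eps$.

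The correct move — and the reason the statement says ``for sufficiently small $\rho$,'' a freedom your argument never actually uses — is to keep the given $s$ fixed and shrink the \emph{scale}: almost expansivity at scale $\eps$ implies that $\bigcup_{\gamma<\eps}\{y:\Gamma_\gamma(y)\subset f_{[-s,s]}y\}$ has full $\nu$-measure for the given $s$ (this needs a short compactness/continuity argument, including the periodic case), so one chooses $\gamma$ with $\nu(\{y:\Gamma_\gamma(y)\subset f_{[-s,s]}y\})>1-\alpha/4$ and then requires $\rho<\min\{\gamma/2,\eps/2\}$, running your tube argument at scale $2\rho\leq\gamma$. With that single repair the rest of your proof goes through and is a legitimate variant of the intended one: your uniform-shrinking estimate on a compact $K$ via a compactness--contradiction argument replaces an Egorov step, and your use of a $\sigma$-neighborhood of a compact core $A^*\subset A$ replaces a separation argument between two compact sets (one inside $f_{[-s,s]}A$, one outside $f_{[-3s,3s]}A$); both substitutions are correct and, if anything, slightly cleaner. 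But as written, Step 1 proves a statement with the wrong quantifier on $s$, which does not yield the proposition.
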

	\begin{proof}
		Let $X_{s,\gamma} = \{x\mid \Gamma_\gamma(x)\subset f_{[-s,s]}(x)\}$.  First, we show that $\bigcup_{\gamma < \eps}X_{s,\gamma} \supset X\setminus \NE(\eps)$. Let $x\in X\setminus \NE(\eps)$. Then, there exists a minimal $r > 0$ such that $\Gamma_\eps(x)\subset f_{[-r,r]}(x)$. If $r\leq s$, then $x\in X_{s,\eps}$, so we assume $r > s$. Now note that the minimality of $r$ implies that $f_tx\neq x$ for $t\in [-r,r]$. Consequently, $f_{[-r,-s]}(x)\cup f_{[-s,-r]}(x)$ is compact and disjoint from $\{x\}$, so there exists $\gamma$ less than the distance between these two sets. Then $\Gamma_\gamma(x)\subset f_{(-s,s)}(x)$, and we have shown the desired inclusion.

		By almost expansivity, there exists $\gamma$ such that $\nu(X_{s,\gamma}) > 1 - \frac{\alpha}{3}$. Now let $\rho < \min \{\gamma,\frac{\eps}{2}\}$ be arbitrary, and write $X_s := X_{s,\rho}$. For $A\subset X$ and $t > 0$, define
		$$\operatorname{diam}_{[-s,s]}A = \sup_{x_1,x_2\in A}\inf_{t_1,t_2\in [-s,s]}d(f_{t_1}x_1,f_{t_2}x_2).$$		
	As $\bigcap_t B_{[-t,t]}(x,\rho)\subset f_{[-s,s]}(x)$, for each $x\in X_s$, we have $\operatorname{diam}_{[-s,s]}B_{[-t,t]}(x,\rho)\to 0$ as $t\to\infty$. Now let $\s{A}_t' = f_{t/2}\s{A}_t$ and set $w_t(x)$ to be the element of $\s{A}_t'$ containing $x$. By construction $w_t(x)\subset \overline{B}_{[-t/2,t/2]}(x,2\rho)$, and so we have that $\operatorname{diam}_{[-s,s]}w_t(x)\to 0$ as $t\to\infty$ for $\nu$-a.e. $x\in X_s$. By Egorov's theorem, there exists $X_s'\subset X_s$ with $\nu(X_s\setminus X_s') < \frac{\alpha}{3}$ such that this convergence is uniform on $X_s'$.

		Now let $A' = A\cap X_s'$. Then define $K_1\subset A'$ and $K_2\subset X\setminus f_{[-3s,3s]}A$ to be compact such that $\nu(A'\setminus K_1) < \frac{\alpha}{3}$ and $\nu(X\setminus (f_{[-3s,3s]}A\cup K_2)) < \kappa$. Now consider $f_{[-s,s]}K_1$ and $f_{[-s,s]}K_2$. These are compact and disjoint, because $f_{[-s,s]}K_1\subset f_{[-s,s]}A$ and $f_{[-s,s]}K_2\subset X\setminus f_{(-2s,2s)}A$. Therefore, they are uniformly separated by some distance $\theta > 0$.
		Consequently,
		$$\inf_{t_1,t_2\in [-s,s]}\{d(f_{t_1}x_1,f_{t_2}x_2)\mid x_1\in K_1, x_2\in K_2\} \geq \theta.$$
		By uniform convergence on $X_s'$, there exists $t_0\in[0,\infty)$ such that $\operatorname{diam}_{[-s,s]}w_t(x) < \theta$ for every $t\geq t_0$ and $x\in X_s'$. Therefore, for all $t\geq t_0$, if $w\in \s{A}_t'$ satisfies $w\cap K_1\neq \es$, then $w\cap K_2 = \es$. Thus defining $U' = \bigcup\{w\in\s{A}_t'\mid w\cap K_1 \neq \es\}$, observe that $K_1\subset U'$ and $K_2\cap U' = \es$. Hence, we see that 
		$$\nu(A\setminus U') \leq \nu(A\setminus K_1) \leq \nu(A\setminus A') + \nu(A'\setminus K_1) < \alpha$$
		and
		$$\nu(U'\setminus f_{[-3s,3s]}A) \leq \nu(X\setminus (f_{[-3s,3s]}A\cup K_2)) < \kappa.$$
		If we set $U\subset \s{A}_t$ to be $U = f_{-t/2}U'$, then we are done.
	\end{proof}

\subsection{Product expansive measures}
Consider a product of flows $(X \x X, \FFF \x \FFF)$, and define the bi-infinite Bowen ball to be
\begin{equation} \label{Gammaeps}
\Gamma_\eps(x,y) = \{(x',y')\in X\x X\mid \tilde d((f_tx, f_ty), (f_tx', f_ty')) < \eps \text{ for all } t \in \RR\}.
\end{equation}
		
We can also write $\Gamma_\eps(x,y)$ as $\Gamma_\eps((x,y); \FFF \x \FFF, \tilde d)$ when we want to emphasize the metric and the dynamics.
\begin{defn}
	The set of \emph{product non-expansive points at scale $\eps$} is
	$$\operatorname{NE^\x}(\eps) := \{(x,y)\in X\x X\mid \Gamma_\eps(x,y)\not\subset f_{[-s,s]}(x)\x f_{[-s,s]}(y)\text{ for any }s > 0\}.$$
\end{defn}
\begin{defn}
 We say a measure $\nu \in \MMM(X \x X, \FFF \x \FFF)$ is \emph{product expansive at scale $\eps$} if $\nu(\NE^\x(\eps)) = 0$.
\end{defn}
We have the following  basic lemma. 
\begin{lem} \label{NEx}
We have $\NE^\x(\eps) = (X \x \NE(\eps)) \cup (\NE(\eps) \x X)$.
\end{lem}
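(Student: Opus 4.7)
The plan is straightforward: unpack the definition of $\NE^\x(\eps)$ and use that the metric $\tilde d$ on $X \x X$ is the max metric, which makes the two-sided Bowen set $\Gamma_\eps(x,y)$ decompose as a product.

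First I would observe that for any $(x,y)\in X \x X$ and any $\eps>0$,
\[
\Gamma_\eps(x,y) = \Gamma_\eps(x) \x \Gamma_\eps(y).
\]
This is immediate from the definition of $\tilde d$: the condition $\tilde d((f_tx,f_ty),(f_tx',f_ty')) < \eps$ for every $t\in\RR$ is equivalent to $d(f_tx,f_tx') < \eps$ for all $t$ and $d(f_ty,f_ty') < \eps$ for all $t$.

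Next I would characterize the complement of $\NE^\x(\eps)$. Note that $\Gamma_\eps(x)$ and $\Gamma_\eps(y)$ are both nonempty (each contains its basepoint), so for any $s>0$ the inclusion
\[
\Gamma_\eps(x)\x\Gamma_\eps(y) \subset f_{[-s,s]}(x)\x f_{[-s,s]}(y)
\]
holds if and only if both $\Gamma_\eps(x)\subset f_{[-s,s]}(x)$ and $\Gamma_\eps(y)\subset f_{[-s,s]}(y)$ hold. Hence $(x,y)\notin\NE^\x(\eps)$ iff there exists a single $s>0$ with both inclusions true.

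Finally I would unify the two choices of $s$. If $x,y\notin\NE(\eps)$, there are $s_1, s_2 > 0$ with $\Gamma_\eps(x)\subset f_{[-s_1,s_1]}(x)$ and $\Gamma_\eps(y)\subset f_{[-s_2,s_2]}(y)$; setting $s := \max(s_1,s_2)$ witnesses $(x,y)\notin\NE^\x(\eps)$. Conversely, any $s$ witnessing $(x,y)\notin\NE^\x(\eps)$ witnesses individually that $x\notin\NE(\eps)$ and $y\notin\NE(\eps)$. Taking complements in $X\x X$ yields
\[
\NE^\x(\eps) = (\NE(\eps)\x X)\cup(X\x\NE(\eps)),
\]
as claimed. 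There is no serious obstacle here — the statement is essentially a bookkeeping consequence of the product structure of Bowen sets under the max metric.
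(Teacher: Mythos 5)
Your proof is correct and follows essentially the same route as the paper: identify $\Gamma_\eps(x,y) = \Gamma_\eps(x)\x\Gamma_\eps(y)$ under the max metric and compare complements, with your max$(s_1,s_2)$ step just making explicit a detail the paper leaves implicit.
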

\begin{proof}
The claim follows from showing that the complements are equal, using that $\Gamma_\eps(x,y) = \Gamma_\eps(x)\x \Gamma_\eps(y)$, and so $\Gamma_\eps(x,y)\subset f_{[-s,s]}(x)\x f_{[-s,s]}(y)$ for some $s > 0$ if and only if $\Gamma_\eps(x)\subset f_{[-s,s]}(x)$ and $\Gamma_\eps(y)\subset f_{[-s,s]}(y)$.
\end{proof}
It can be checked easily using Lemma \ref{NEx} that if $\nu\in \MMM(X, \FFF)$ is almost expansive at scale $\eps$, then $\nu\x\nu$ is product expansive at scale $\eps$. Recall that for an invertible discrete-time dynamical system $(X, f)$,  we say that a measure $\nu \in \MMM(X, f)$ is \emph{almost entropy expansive} at scale $\eps$ in a metric $d$ if $h(\Gamma_\eps(x;f, d))=0$ for $\nu$-a.e. $x \in X$, where 
\[
\Gamma_\eps(x;f, d) = \{x'\in X \mid d(f^nx, f^nx')\leq \eps \text{ for all } n \in \ZZ\},
\]
and $h(\cdot)= P(\cdot,0)$ is the topological entropy. 
\begin{prop}
	If $\nu\in \MMM(X \x X, \FFF \x \FFF)$ is product expansive at scale $\eps$, then $\nu$ is almost entropy expansive at scale $\eps$ with respect to the time-t map $f_t\x f_t$ and metric $\tilde d_t$.
\end{prop}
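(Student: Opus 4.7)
The plan is to show that for $\nu$-a.e.\ $(x,y)$ the Bowen ball $\Gamma_\eps((x,y); f_t\x f_t, \tilde d_t)$ is contained in the two-parameter orbit-arc product $A(x,y) := f_{[-s,s]}(x)\x f_{[-s,s]}(y)$ for some $s = s(x,y) > 0$, and then to prove that any such $A$ has zero topological entropy under $f_t\x f_t$. First, unpack the definition: using $\tilde d_t((a,b),(c,d)) = \max_{\sigma \in [0,t]} \tilde d((f_\sigma a, f_\sigma b), (f_\sigma c, f_\sigma d))$ and letting $n$ range over $\ZZ$, the condition $\tilde d_t((f_t\x f_t)^n(x,y), (f_t\x f_t)^n(x',y')) \leq \eps$ for all $n$ is equivalent to $d(f_u x, f_u x') \leq \eps$ and $d(f_u y, f_u y') \leq \eps$ for every $u \in \R$. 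The hypothesis $\nu(\NE^\x(\eps)) = 0$ then places $(x,y) \notin \NE^\x(\eps)$ for $\nu$-a.e.\ $(x,y)$, which supplies the desired containment in $A(x,y)$ (modulo the standard strict vs.\ non-strict inequality adjustment, handled by the inclusion $\{(x',y') : \sup_u \tilde d \leq \eps\} \subset \Gamma_{\eps'}(x,y)$ for any $\eps' > \eps$).

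The key step is the entropy estimate $h(A; f_t\x f_t, \tilde d_t) = 0$. Parametrise points of $A$ by $(r_1, r_2) \in [-s,s]^2$ via $(r_1,r_2) \mapsto (f_{r_1}x, f_{r_2}y)$. Two such points with parameters $(r_1^a, r_2^a)$ and $(r_1^b, r_2^b)$ fail to be $(N,\delta)$-separated precisely when
\[
\sup_{u \in [0, Nt]} \max\{ d(f_{u+r_1^a}x, f_{u+r_1^b}x),\, d(f_{u+r_2^a}y, f_{u+r_2^b}y)\} \leq \delta.
\]
Setting $h = r_1^a - r_1^b$ and $z = f_{u+r_1^b}x$, we have $d(f_{u+r_1^a}x, f_{u+r_1^b}x) = d(f_h z, z)$. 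By uniform continuity of the flow on $X \x [-2s, 2s]$, for every $\delta > 0$ there exists $\eta = \eta(\delta) > 0$ with $d(f_h z, z) < \delta$ uniformly in $z \in X$ whenever $|h| < \eta$. Consequently, if $|r_1^a - r_1^b| < \eta$ and $|r_2^a - r_2^b| < \eta$, then the two parameters cannot produce $(N,\delta)$-separated points, for \emph{any} $N$. Hence any $(N,\delta)$-separated subset of $A$ has cardinality at most $(2s/\eta + 1)^2$, independent of $N$, and $h(A; f_t\x f_t, \tilde d_t) = 0$ follows immediately.

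The main (mild) obstacle is the mismatch between the strict inequality used in the definition of $\NE^\x(\eps)$ and the non-strict inequality used in the infinite-time Bowen ball for almost entropy expansivity; this is resolved by the elementary inclusion mentioned above together with the robustness of the uniform continuity bound under small perturbations of $s$. Conceptually, once the containment in a two-parameter orbit-arc product is in place, the entropy computation is transparent: $f_t\x f_t$ acts on such a product as translation in two real parameters, and translation has no capacity for exponential separation --- which is precisely what the uniform continuity bound formalises.
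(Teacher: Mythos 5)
Your argument is correct and shares the paper's skeleton: identify the bi-infinite Bowen ball of $f_t\x f_t$ in the metric $\tilde d_t$ with the continuous-time set $\Gamma_\eps((x,y);\FFF\x\FFF,\tilde d)$, use $\nu(\NE^\x(\eps))=0$ to place it inside $f_{[-s,s]}(x)\x f_{[-s,s]}(y)$ for $\nu$-a.e.\ $(x,y)$, and then show that this orbit-arc product has zero entropy. The difference is in that last step: the paper bounds $h(f_{[-s,s]}(x)\x f_{[-s,s]}(y))$ by $h(f_{[-s,s]}(x))+h(f_{[-s,s]}(y))$ via Proposition \ref{Addition} and then quotes the fact that a finite orbit segment has zero entropy (citing \cite{CT}), whereas you prove the zero-entropy statement directly: parametrise the product by $(r_1,r_2)\in[-s,s]^2$ and use uniform continuity of the flow to show that every $(N,\delta)$-separated subset has cardinality at most $(2s/\eta(\delta)+1)^2$, uniformly in $N$. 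Your route is more elementary and self-contained (no appeal to product subadditivity or to the external zero-entropy lemma), at the cost of a few extra lines, and it makes transparent why translation in the two flow parameters cannot generate exponential separation; the paper's version is shorter because both ingredients are already on hand. One caveat: your parenthetical fix of the strict-versus-nonstrict mismatch does not work as stated, since $(x,y)\notin\NE^\x(\eps)$ controls only $\Gamma_\eps$ and gives no information about $\Gamma_{\eps'}$ for $\eps'>\eps$; however, this mismatch is a conventions artifact that the paper's own one-line identification of the two Bowen balls shares, and it disappears once the open/closed conventions are matched (all downstream uses take $\eps$ sufficiently small), so it does not affect the substance of your proof.
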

\begin{proof} 
Observe that $\Gamma_\eps((x,y);f_t\x f_t, \tilde d_t) = \Gamma_\eps ((x,y); \FFF \x \FFF, \tilde d)$. Since $\nu$ is product expansive, it follows that for $\nu$-a.e. $(x,y)$, 
\[
\Gamma_\eps((x,y);,f_t\x f_t, \tilde d_t) \subset f_{[-s,s]}(x)\x f_{[-s,s]}(y)
\] 
for some $s=s(x,y)\in [0,\infty)$. By Proposition \ref{Addition}, $h(f_{[-s,s]}(x)\x f_{[-s,s]}(y)) \leq h(f_{[-s,s]}(x)) + h(f_{[-s,s]}(y))$. Any finite orbit segment has zero entropy, see for example the proof of \cite[Proposition 3.3]{CT}. It follows that for $\nu$-a.e. $(x,y)\in X\x X$, 
\[
h(\Gamma_\eps((x,y);f_t\x f_t, \tilde d_t)) \leq h(f_{[-s,s]}(x)\x f_{[-s,s]}(y))  \leq h(f_{[-s,s]}(x)) + h(f_{[-s,s]}(y))= 0,
\] 
and thus $\nu$ is almost entropy expansive at scale $\eps$ in the metric $\tilde d_t$ with respect to the map $f_t \x f_t$.
\end{proof}
It is shown in \cite[Theorem 3.2]{CT} that if $\nu$ is almost entropy expansive at scale $\eps$, then every partition $\AAA$ with diameter at most $\eps$ has $h_{\nu}(f) = h_{\nu}(f, \AAA)$. Thus, we have the following corollary.

\begin{cor}\label{ScaleEntropy}
If $\nu\in \MMM(X \x X, \FFF \x \FFF)$ is  product expansive at scale $\eps$ and $\s{A}_t$ is a partition adapted to a maximal cardinality $(t,\eps/2)$-separated set $E_t$, then $h_{\nu}(f_t\x f_t,\s{A}_t) = h_\nu(f_t\x f_t)$.
\end{cor}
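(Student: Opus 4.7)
The plan is to read this corollary as an immediate consequence of the previous proposition combined with \cite[Theorem 3.2]{CT}. The previous proposition in the excerpt establishes that a product expansive measure $\nu$ at scale $\eps$ is almost entropy expansive at scale $\eps$ with respect to the time-$t$ map $f_t \x f_t$ in the metric $\tilde d_t$. The quoted result from \cite{CT} then says that if $\nu$ is almost entropy expansive at scale $\eps$ in a metric, then for any partition $\AAA$ whose elements have diameter at most $\eps$ in that metric, $h_\nu(f_t\x f_t, \AAA) = h_\nu(f_t\x f_t)$. So the task reduces to checking that the adapted partition $\s{A}_t$ has $\tilde d_t$-diameter at most $\eps$.

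This last step is the only bookkeeping to do. By the definition of an adapted partition (with the scale parameter taken to be $\eps/2$ instead of $\eps$), each $A \in \s{A}_t$ satisfies $A \subset \overline{B_t}((x,y), \eps/2)$ for some $(x,y) \in E_t$, where $B_t$ denotes the Bowen ball for $f_t \x f_t$ in the product metric $\tilde d$. Unpacking the definition of $\tilde d_t$ shows that any two points in $A$ are within $\tilde d_t$-distance at most $\eps$ of each other, so $\operatorname{diam}_{\tilde d_t}(A) \leq \eps$.

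With these two ingredients in hand, I would simply write: by the preceding proposition $\nu$ is almost entropy expansive at scale $\eps$ in $\tilde d_t$ for $f_t \x f_t$; the partition $\s{A}_t$ has $\tilde d_t$-diameter at most $\eps$ by the adaptedness convention; hence \cite[Theorem 3.2]{CT} yields $h_\nu(f_t\x f_t, \s{A}_t) = h_\nu(f_t\x f_t)$. There is no real obstacle here beyond making sure the scales line up — in particular that the proposition is invoked for the same scale $\eps$ at which product expansivity is assumed, and that the adaptedness is taken with respect to $(t,\eps/2)$-separated sets precisely so the resulting partition diameter matches the scale of almost entropy expansivity.
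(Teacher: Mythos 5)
Your proposal is correct and matches the paper's reasoning exactly: the paper derives this corollary by combining the preceding proposition (product expansive implies almost entropy expansive at scale $\eps$ for $f_t \x f_t$ in $\tilde d_t$) with \cite[Theorem 3.2]{CT}, with the adaptedness to a $(t,\eps/2)$-separated set giving partition elements of $\tilde d_t$-diameter at most $\eps$. Nothing further is needed.
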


We have the following approximation result for product expansive measures, which generalizes \cite[Proposition 3.10]{CT}, building again on \cite[Lemma 2]{Bow74}. 

\begin{prop}\label{Approximation}
	Let $\FFF$ be a continuous flow on a compact metric space $X$, and suppose $\nu\in \MMM(X\x X, \FFF \x \FFF)$ is product expansive at scale $\eps$. Let $\gamma\in (0,\eps/2)$, and for each $t > 0$, let $\s{A}_t$ be an adapted partition for a $(t,\gamma)$-separated set of maximal cardinality. Let $Q\subset X\x X$ be a measurable set invariant under the $\R^2$-action, meaning for all $t,s\in\R$, $(f_t\x f_s)Q = Q$. Then for every $\alpha > 0$, there exists $t_0$ so that if $t\geq t_0$, we can find $U\subset \s{A}_t$ such that $\nu(U\vartriangle Q) < \alpha$.
\end{prop}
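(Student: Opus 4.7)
The plan is to adapt the argument of Proposition~\ref{InvariantApprox}, using the $\R^2$-invariance of $Q$ to eliminate the asymmetry caused by the flowed-out set $f_{[-3s,3s]}A$ in that result. The key point is that because $(f_t\x f_s)Q = Q$ for every $(t,s)\in\R^2$, the full $\R^2$-flow-outs of compact subsets of $Q$ and of $(X\x X)\setminus Q$ stay on their respective sides.

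First I would upgrade product expansivity to a uniform time scale. Define
\[
Y_s := \{(x,y)\in X\x X\mid \Gamma_{2\gamma}(x,y)\subset f_{[-s,s]}(x)\x f_{[-s,s]}(y)\}.
\]
Since $2\gamma<\eps$ and $\Gamma_{2\gamma}\subset\Gamma_\eps$, product expansivity at scale $\eps$ implies that $\bigcup_{s>0}Y_s$ has full $\nu$-measure, so I pick $s$ with $\nu(Y_s)>1-\alpha/5$. Next I introduce a product orbit-diameter
\[
\widetilde{\operatorname{diam}}_{[-s,s]^2}\!A := \sup_{(x_i,y_i)\in A}\inf_{t_i,r_i\in[-s,s]}\tilde d\bigl((f_{t_1}x_1,f_{r_1}y_1),(f_{t_2}x_2,f_{r_2}y_2)\bigr),
\]
which for a product set $A_1\x A_2$ equals $\max\{\operatorname{diam}_{[-s,s]}A_1,\operatorname{diam}_{[-s,s]}A_2\}$ because the four infima separate over independent variables. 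Since $\Gamma_{2\gamma}(x,y)=\Gamma_{2\gamma}(x)\x\Gamma_{2\gamma}(y)$, membership in $Y_s$ forces $\Gamma_{2\gamma}(x)\subset f_{[-s,s]}(x)$ and $\Gamma_{2\gamma}(y)\subset f_{[-s,s]}(y)$. The coordinatewise shrinking argument from Proposition~\ref{InvariantApprox}, combined with the product structure of the two-sided Bowen ball $B_{[-t/2,t/2]}((x,y),2\gamma)$, then yields $\widetilde{\operatorname{diam}}_{[-s,s]^2}B_{[-t/2,t/2]}((x,y),2\gamma)\to 0$ pointwise on $Y_s$. Egorov gives $Y_s'\subset Y_s$ with $\nu(Y_s\setminus Y_s')<\alpha/5$ on which this convergence is uniform.

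Now take compact sets $K_1\subset Q\cap Y_s'$ with $\nu((Q\cap Y_s')\setminus K_1)<\alpha/5$ and $K_2\subset (X\x X)\setminus Q$ with $\nu(((X\x X)\setminus Q)\setminus K_2)<\alpha/5$. By $\R^2$-invariance, $\bigcup_{t_1,t_2\in[-s,s]}(f_{t_1}\x f_{t_2})K_1\subset Q$ and similarly for $K_2$, so these two $\R^2$-flow-outs are compact and disjoint, hence uniformly $\tilde d$-separated by some $\theta>0$. Set $\s{A}_t'=(f_{t/2}\x f_{t/2})\s{A}_t$ and choose $t_0$ so that for $t\geq t_0$ every $w\in\s{A}_t'$ meeting $Y_s'$ satisfies $\widetilde{\operatorname{diam}}_{[-s,s]^2}w<\theta$. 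Put $U':=\bigcup\{w\in\s{A}_t'\mid w\cap K_1\neq\es\}$; then $K_1\subset U'$, and the separation forces $U'\cap K_2=\es$, since any $w$ meeting both $K_1$ and $K_2$ would witness a pair in the two flow-outs within distance less than $\theta$.

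Finally set $U=(f_{-t/2}\x f_{-t/2})U'\subset \s{A}_t$. Flow-invariance of $\nu$ together with $(f_{t/2}\x f_{t/2})Q=Q$ gives $\nu(U\vartriangle Q)=\nu(U'\vartriangle Q)$, and using $K_1\subset U'\subset (X\x X)\setminus K_2$ we split
\[
\nu(U'\vartriangle Q)\leq \nu\bigl(((X\x X)\setminus Q)\setminus K_2\bigr)+\nu(Q\setminus Y_s')+\nu((Q\cap Y_s')\setminus K_1)<\tfrac{\alpha}{5}+\tfrac{2\alpha}{5}+\tfrac{\alpha}{5}<\alpha,
\]
using $\nu(X\x X\setminus Y_s')<2\alpha/5$. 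The main obstacle is the identification in the second paragraph of the correct product orbit-diameter and its reduction to the coordinate version from Proposition~\ref{InvariantApprox}, allowing the one-factor convergence to be promoted to the product and then made uniform by Egorov; once this is in hand, the $\R^2$-invariance of $Q$ makes the remaining compact-set bookkeeping proceed symmetrically on the $Q$ and $(X\x X)\setminus Q$ sides.
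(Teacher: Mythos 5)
Your argument is correct and follows essentially the same route as the paper's proof: a set on which product expansivity holds with a uniform time $s$, Egorov to make the orbit-diameter of adapted partition elements shrink uniformly, compact approximations $K_1\subset Q$ and $K_2\subset (X\x X)\setminus Q$, separation of their $[-s,s]^2$ flow-outs via the $\R^2$-invariance of $Q$, and the pull-back by $f_{-t/2}\x f_{-t/2}$; your decoupled four-parameter diameter is only a cosmetic variant, since the paper's own separation estimate is already stated in that four-parameter form. The one small repair: define $Y_s$ using $\Gamma_\eps$ (as the paper does) rather than $\Gamma_{2\gamma}$, because the shrinking-diameter step is applied to the closed two-sided $2\gamma$-Bowen balls containing the partition elements, and their nested intersection is only guaranteed to lie in $\Gamma_\eps(x)\x\Gamma_\eps(y)$ (not in $\Gamma_{2\gamma}(x)\x\Gamma_{2\gamma}(y)$), so the containment in $f_{[-s,s]}(x)\x f_{[-s,s]}(y)$ needed for the convergence is exactly what scale-$\eps$ product expansivity provides.
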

Note that  Proposition \ref{Approximation} does not apply for sets $Q$ that are $\FFF \times \FFF$ invariant i.e. invariant for each map $f_t \x f_t$. We need $Q$ to be invariant for EVERY map $f_s \x f_t$ where $s,t \in \RR$.
\begin{proof}
	We will assume $\nu(Q) > 0$. For $w\subset X\x X$ and $s\in[0,\infty)$, define
	$$\operatorname{diam}_{[-s,s]}w = \sup_{(x_1,y_1),(x_2,y_2)\in w}\inf_{t_1,t_2\in [-s,s]}\max\{d(f_{t_1}x_1,f_{t_2}x_2),d(f_{t_1}y_1,f_{t_2}y_2)\}.$$
	Now, for $s\in[0,\infty)$, define $X_s = \{x\mid\Gamma_\eps(x)\subset f_{[-s,s]}(x)\}$, and set
	$$\tilde{X_s} = \{(x,y)\mid\Gamma_\eps(x,y)\subset f_{[-s,s]}(x)\x f_{[-s,s]}(y)\} = X_s\x X_s.$$
	Now fix $\beta > 0$. Observe $\bigcup_s X_s = X\setminus \operatorname{NE}(\eps)$. Consequently, $\bigcup_s \tilde{X_s} = (X\x X)\setminus \operatorname{NE}^\x(\eps)$, and so there exists $s$ such that $\nu(\tilde{X_s}) > 1 - \beta$. Furthermore, for every $(x,y)\in \tilde{X_s}$, we have that
	\begin{align*}
	\operatorname{diam}_{[-s,s]}B_{[-t,t]}((x,y),\eps) &= \operatorname{diam}_{[-s,s]}(B_{[-t,t]}(x,\eps)\x B_{[-t,t]}(y,\eps))
	\\
	&= \max\{\operatorname{diam}_{[-s,s]}B_{[-t,t]}(x,\eps),\operatorname{diam}_{[-s,s]}B_{[-t,t]}(y,\eps)\},
	\end{align*}
	which tends to $0$. Now let $\s{A}_t' = (f_{t/2}\x f_{t/2})\s{A}_t$, and write $w_t(x,y)$ for the element of the partition $\s{A}_t'$ which contains $(x,y)$. Observe that for each $(x,y)\in X\x X$, there exists a point $(x',y')$ such that $w_t(x,y)\subset \overline{B}_{[-t/2,t/2]}((x',y'),\gamma)$. Therefore, $w_t(x,y)\subset \overline{B}_{[-t/2,t/2]}((x,y),2\gamma)$. Thus, $\operatorname{diam}_{[-s,s]}w_t(x,y)\to 0$ for almost every $(x,y)\in\tilde{X_s}$. By Egorov's theorem, there exists $\tilde{X'_s}\subset \tilde{X_s}$ with $\nu(\tilde{X_s}\setminus\tilde{X'_s}) < \beta$ such that convergence is uniform on $\tilde{X'_s}$. Now set $Q' = \tilde{X'_s}\cap Q$, and let $K_1\subset Q'$ and $K_2\subset (X\x X)\setminus Q$ be compact with $\nu(Q'\setminus K_1) < \beta$ and $\nu((X\x X)\setminus (Q\cup K_2)) < \beta$. For $i = 1,2,$ define
	$$K_i^s = \{(f_{t_1}(x),f_{t_2}(y))\mid (x,y)\in K_i, t_1,t_2\in [-s,s]\}.$$
	Then $K_i^s$ is compact, and $K_1^s\subset Q$ and $K_2^s\subset (X\x X)\setminus Q$. Thus, there exists $\delta > 0$ such that $d(K_1^s, K_2^s) \geq \delta$ by compactness. So, for all $(x_i,y_i)\in K_i$,
	$$\inf_{t_i,r_i\in [-s,s]}\max\{d(f_{t_1}(x_1),f_{t_2}(x_2)),d(f_{r_1}(y_1),f_{r_2}(y_2))\} \geq \delta.$$
	Now uniform convergence on $Q'$ implies that there exists $t_0$ such that for all $t\geq t_0$, $\operatorname{diam}_{[-s,s]}w_t(x,y) < \delta$ for all $(x,y)\in Q'$. Hence, for all $t\geq t_0$, if $w\in \s{A}_t'$ and $w\cap K_1\neq\es$, then $w\cap K_2 = \es$. Therefore, setting $U' = \bigcup\{w\in\s{A}_t'\mid w\cap K_1 \neq \es\}$, we have that $K_1\subset U'$ and $K_2\cap U' = \es$, and so,
	\begin{align*}
	\nu(U'\vartriangle Q) &= \nu(U'\setminus Q) + \nu(Q\setminus U')
	\\
	&\leq \nu((X\x X)\setminus (Q\cup K_2)) + \nu(Q\setminus K_1)
	\\
	&\leq \beta + \nu(Q\setminus Q') + \nu(Q'\setminus K_1) \leq \beta + 2\beta + \beta.
	\end{align*}
	As we can choose $\beta$ to be arbitrarily small, we have that $\nu(U'\vartriangle Q) < \alpha$. Therefore, defining $U\subset \s{A}_t$ by $U = (f_{-t/2}\x f_{-t/2})U'$, we see that
	$$\nu(U\vartriangle Q) = \nu((f_{-t/2}\x f_{-t/2})(U\vartriangle Q)) = \nu(U'\vartriangle Q) < \alpha. \qedhere $$
\end{proof}
		
\subsection{Counting estimates}
We will require a technical counting lemma from \cite{CT}. In our setting, the statements of Lemma 4.8 and Lemma 4.18 of \cite{CT} easily combine to give the following statement. We refer to \cite{CT} for the proofs.

\begin{lem}\label{CombinedLemma}
Let $(\PPP,\GGG,\SSS)$ be a decomposition for $X \times [0, \infty)$ such that
\begin{enumerate}
	\item $\GGG$ has specification at all scales;
	\item $\ph$ has the Bowen property on $\GGG$ and
	\item  $P( [\PPP]  \cup  [\SSS], \ph) < P(\ph)$,
\end{enumerate}
Fix sufficiently small $\gamma \in (0,\eps/4)$ where $\eps$ satisfies $P(\ph,\eps) = P(\ph)$. Then for every $\alpha\in(0,1)$, there exists a constant $C_\alpha > 0$ and $M\in [0, \infty)$ such that that for sufficiently large $t$, the following is true. Consider an equilibrium state $\nu$ for $\ph$ and a family $\{E_t\}_{t > 0}$ of maximizing $(t,\gamma)$-separated sets for $\Lambda_t(X, \ph, \gamma)$. Let $\{\mathscr{A}_t\}$ be adapted partitions for the family $\{E_t\}$, and given $x\in E_t$ let $w_x$ denote the corresponding partition element in $\mathscr{A}_t$.  If $h_\nu(f_t,\s{A}_t) = h_\nu(f_t)$ and $E'_t\subset E_t$ satisfies $\nu(\bigcup_{x\in E_t'}w_x)\geq \alpha$, then if we write $\CCC = \{(x,t)\mid x\in E_t'\}$, we have
$$\Lambda_t(\CCC\cap \GGG^M, \ph, \gamma) \geq C_\alpha e^{tP(\ph)}.$$
\end{lem}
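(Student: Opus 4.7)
The plan is to establish the lemma in two independent stages: first show that $\Lambda_t(\CCC,\ph,\gamma)\geq C'_\alpha\, e^{tP(\ph)}$ using only the equilibrium-state hypothesis on $\nu$ and the entropy-saturation condition $h_\nu(f_t,\s{A}_t)=h_\nu(f_t)$; then show that restricting to $\GGG^M$ costs at most a constant factor in the partition sum when $M$ is taken large in terms of the pressure gap. Combining the two stages yields the bound for $\CCC\cap\GGG^M$.

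For the first stage I would combine the Shannon--McMillan--Breiman theorem applied to $(f_t,\s{A}_t)$ with Birkhoff's ergodic theorem applied to $\ph$ (reducing to an ergodic component of $\nu$ if needed, using that the partition sum is convex in $\nu$). The entropy-saturation hypothesis guarantees that the partition $\s{A}_t$ achieves $h_\nu$, so for $\nu$-typical $x$ we have $-\tfrac{1}{t}\log\nu(w_x)\to h_\nu(f_1)$ and $\tfrac{1}{t}\int_0^t \ph(f_s x)\,ds\to \int\ph\,d\nu$. Since $\nu$ is an equilibrium state, $h_\nu(f_1)+\int\ph\,d\nu=P(\ph)$, and therefore $\nu(w_x)\,e^{\int_0^t\ph(f_s x)\,ds}\asymp e^{-tP(\ph)}$ for typical $x$. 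An application of Egorov's theorem extracts a set $G$ with $\nu(G)\geq 1-\alpha/2$ on which both convergences are uniform. Then for $t$ large, $\bigl(\bigcup_{x\in E'_t}w_x\bigr)\cap G$ has $\nu$-mass at least $\alpha/2$, so summing the pointwise estimate above over the corresponding $x\in E'_t$ yields $\Lambda_t(\CCC,\ph,\gamma)\geq C'_\alpha\, e^{tP(\ph)}$ for a constant depending only on $\alpha$.

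For the second stage, any $(x,t)\in\CCC\setminus\GGG^M$ satisfies $p(x,t)>M$ or $s(x,t)>M$. Decomposing over the discretized prefix length $m$ and suffix length $n$, the specification property on $\GGG$ combined with the Bowen property produces a gluing estimate of the form
\[
\Lambda_t(\CCC\setminus\GGG^M,\ph,\gamma)\;\lesssim \sum_{\substack{m+n\leq t\\ m>M\;\text{or}\;n>M}}\Lambda_m([\PPP],\ph,\gamma)\,\Lambda_{t-m-n}(X,\ph,\gamma)\,\Lambda_n([\SSS],\ph,\gamma).
\]
By the pressure gap $P([\PPP]\cup[\SSS],\ph)\leq P(\ph)-\delta$ for some $\delta>0$, one has $\Lambda_m([\PPP],\ph,\gamma)\leq K e^{m(P(\ph)-\delta)}$ and similarly for $[\SSS]$, while $\Lambda_{t-m-n}(X,\ph,\gamma)\leq K e^{(t-m-n)P(\ph)}$ by the choice of $\gamma$. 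The double sum is dominated by $C\,e^{-\delta M}\,e^{tP(\ph)}$; choosing $M$ large enough that this is at most $C'_\alpha\,e^{tP(\ph)}/2$ gives $\Lambda_t(\CCC\cap\GGG^M,\ph,\gamma)\geq (C'_\alpha/2)\,e^{tP(\ph)}$, which is the desired conclusion with $C_\alpha=C'_\alpha/2$.

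The main obstacle is making the first stage uniform in the choice of $E'_t$: the Shannon--McMillan--Breiman and Birkhoff convergences are pointwise, and one needs the Egorov threshold $t_0$ and the constant $C'_\alpha$ to depend only on $\alpha$, not on $E'_t$. This forces $C_\alpha\to 0$ as $\alpha\to 0$ and explains why the lemma only holds for sufficiently large $t$. A secondary subtlety is the passage from $\PPP,\SSS$ to the discretized $[\PPP],[\SSS]$ in the gluing estimate of Stage~2, which is the reason the pressure-gap hypothesis is formulated for the bracketed collections.
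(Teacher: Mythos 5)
The paper does not actually prove this lemma; it is imported wholesale from Climenhaga--Thompson \cite{CT} (their Lemmas 4.8 and 4.18), and your two-stage architecture (a lower bound for $\Lambda_t(\CCC,\ph,\gamma)$, then discarding segments with long prefix or suffix via the pressure gap) does mirror the shape of that argument. However, Stage 1 as written does not work. The Shannon--McMillan--Breiman theorem cannot give $-\tfrac1t\log\nu(w_x)\to h_\nu(f_1)$: for fixed $t$, SMB applied to $(f_t,\s{A}_t)$ concerns the refinements $\bigvee_{j=0}^{k-1}f_{-jt}\s{A}_t$ as $k\to\infty$, and says nothing about the measure of a single element $w_x$ of the partition $\s{A}_t$, which moreover changes with $t$. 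What your summation actually requires is an upper Gibbs-type bound $\nu(w_x)\le C\,e^{-tP(\ph)+\int_0^t\ph(f_sx)\,ds}$ at the \emph{fixed} scale $\gamma$, valid for an \emph{arbitrary} equilibrium state $\nu$ (in the application $\nu$ is a hypothetical equilibrium state singular to $\mu\x\mu$, so no Gibbs property may be presupposed). This is not a consequence of SMB, Birkhoff, or Brin--Katok: at a fixed scale those results control the measure of Bowen balls only in the opposite direction, and equality of local and measure-theoretic entropy holds only after $\eps\to 0$. In \cite{CT} the hypothesis $h_\nu(f_t,\s{A}_t)=h_\nu(f_t)$ is used statically, via $tP(\ph)=h_\nu(f_t)+\int\ph_t\,d\nu\le H_\nu(\s{A}_t)+\int\ph_t\,d\nu$ (where $\ph_t(x)=\int_0^t\ph(f_sx)\,ds$), followed by a convexity inequality that splits the resulting sum over $E_t'$ and $E_t\setminus E_t'$; the complement term is then absorbed using the uniform counting bound $\Lambda_t(X,\ph,\gamma)\le Ce^{tP(\ph)}$, and the Bowen property (available only on $\GGG$, another point your sketch glosses over when replacing $\ph_t$ at points of $w_x\cap G$ by $\ph_t(x)$) enters to compare suprema over partition elements with values at the centers.

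Stage 2 has the same hidden dependency: the assertion $\Lambda_{t-m-n}(X,\ph,\gamma)\le Ke^{(t-m-n)P(\ph)}$ ``by the choice of $\gamma$'' is unjustified. The identity $P(\ph,\gamma)=P(\ph)$ only yields $\Lambda_k(X,\ph,\gamma)\le e^{k(P(\ph)+\epsilon)}$ for large $k$, and the stray factor $e^{t\epsilon}$ swamps the constant $C_\alpha'$ from Stage 1 when you subtract $\Lambda_t(\CCC\setminus\GGG^M)$. A fixed-scale partition-sum bound with a uniform constant is precisely one of the central counting lemmas of \cite{CT}: it is proved by first bounding $\Lambda_k(\GGG^M,\ph,\cdot)\le C e^{kP(\ph)}$ using specification and the Bowen property on $\GGG$, and then summing over decompositions using the gap $P([\PPP]\cup[\SSS],\ph)<P(\ph)$. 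So both of your stages rest on exactly the nontrivial estimate that the proposal never establishes; as it stands the argument has a genuine gap rather than being an alternative proof.
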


We will apply the above lemma both when $X= T^1M$ and $X = T^1M \x T^1M$. In the latter case, the hypotheses on the scale $\eps$ and entropy of a partition are satisfied for sufficiently small scales by entropy expansivity and Corollary \ref{ScaleEntropy}.

	\section{Pressure estimate for $(T^1M \times T^1M, (g_t \times g_t))$} \label{pressureest}

In this section, we assume the hypotheses of Theorem \ref{thmA}. In particular, we assume that $\ph: T^1M \to \RR$ is H\"older or $q\ph^u$ for some $q \in \RR$, $P(\Sing,\ph) < P(\ph)$ and that $\eps$ is chosen so that any equilibrium state is almost expansive at scale $\eps$. By \cite[Lemma 5.3]{BCFT}, any $\eps$ less than a third of the injectivity radius is small enough. We also assume that $\Sing\neq \es$. In the case where this does not occur, the results hold by simpler arguments. The potential $\Phi: T^1M \x T^1M \to \RR$ is given by $\Phi(x,y)= \ph(x)+\ph(y)$.	
	
	Recall that $\lambda: T^1M \to [0, \infty)$ is the function that measures the smallest curvature of the horospheres through a point, defined in \S \ref{geoflow}, and that $\tilde \lambda: T^1M \x T^1M \to [0, \infty)$ is defined by $\tilde \lambda(x, y) = \lambda(x)\lambda(y)$. We have specification and the Bowen property for $\ph$ on $\GGG$ for the $\lambda$-decomposition of $(T^1M,(g_t))$ due to \cite[Theorem 4.1]{BCFT} and \cite[Corollaries 7.5,7.8]{BCFT} respectively. Applying Corollary \ref{specbowlift}, we have  specification and the Bowen property for $\Phi$ on $\tilde \GGG$ for the $\tilde \lambda$-decomposition of $(T^1M \x T^1M,(g_t \x g_t))$.
	
In light of Theorem \ref{Pressure Decrease}, the $\tilde \lambda$-decomposition will be useful if we can control $\sup \{ P_\nu(\Phi) : \int \tilde \lambda\,d\nu =0\}$. Note that $\{ \nu: \int \tilde \lambda d \nu=0\} \neq \emptyset $ because for any $m$ supported on $\Sing$, we have $\int \tilde \lambda\, d(m \x m) =0$. We prove the following proposition.
\begin{prop} \label{upperboundfreeenergy}
Let $\varphi : T^1M\to \R$ be continuous, then $\Phi: T^1M \x T^1M \to \R$ satisfies
\[
\sup \{ P_\nu(\Phi) : \int \tilde \lambda d \nu=0\} \leq P(\Sing, \ph)+ P(\ph).
\]
\end{prop}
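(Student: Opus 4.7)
The plan is to exploit the structure imposed on $\nu$ by $\int \tilde\lambda \, d\nu = 0$. Since $\tilde\lambda(x,y) = \lambda(x)\lambda(y) \geq 0$, the condition forces $\lambda(x)\lambda(y) = 0$ for $\nu$-a.e.\ $(x,y)$. Recall from \S\ref{geoflow} (cf.\ \cite[Corollary 3.6]{BCFT}) that $\{v \in \Reg : \lambda(v) = 0\}$ is a null set for every $g_t$-invariant measure on $T^1M$, which applies in particular to the marginals $(\pi_1)_*\nu$ and $(\pi_2)_*\nu$. Hence $\nu$-a.e.\ $(x,y)$ satisfies $x \in \Sing$ or $y \in \Sing$, i.e. $\nu(S_1 \cup S_2) = 1$, where $S_i := \pi_i^{-1}(\Sing)$.

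Both $S_1$ and $S_2$ are closed $(g_t \times g_t)$-invariant sets. Setting $a = \nu(S_1)$, I decompose $\nu = a\nu_1 + (1-a)\nu_2$, where $\nu_1 = \nu|_{S_1}/a$ is supported on $S_1$ and $\nu_2 = \nu|_{S_2 \setminus S_1}/(1-a)$ is supported on $S_2$ (the degenerate cases $a \in \{0,1\}$ are handled by retaining only one piece). By affinity of the entropy map on invariant measures sitting on disjoint invariant sets, together with linearity of $\nu \mapsto \int \Phi\,d\nu$, we have
\[
P_\nu(\Phi) = a P_{\nu_1}(\Phi) + (1-a) P_{\nu_2}(\Phi).
\]

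For each $i$, I apply the standard factor-map inequality for products, $h_{\nu_i}(g_1 \times g_1) \leq h_{(\pi_1)_*\nu_i}(g_1) + h_{(\pi_2)_*\nu_i}(g_1)$, combined with the additive splitting $\int \Phi\,d\nu_i = \int \varphi\,d(\pi_1)_*\nu_i + \int \varphi\,d(\pi_2)_*\nu_i$, to obtain
\[
P_{\nu_i}(\Phi) \leq P_{(\pi_1)_*\nu_i}(\varphi) + P_{(\pi_2)_*\nu_i}(\varphi).
\]
For $i=1$, the measure $(\pi_1)_*\nu_1$ is an invariant probability measure supported on $\Sing$, so the first term is bounded by $P(\Sing,\varphi)$ and the second by $P(\varphi)$. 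For $i=2$, the roles of the two marginals are reversed, yielding the same bound $P(\Sing,\varphi) + P(\varphi)$. Taking the convex combination $aP_{\nu_1}(\Phi)+(1-a)P_{\nu_2}(\Phi)$ gives the claimed inequality.

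The main point requiring care is the initial reduction: one needs the fact that invariant measures on $T^1M$ give no mass to regular points where $\lambda$ vanishes, in order to pass from the a.e.\ identity $\lambda(x)\lambda(y)=0$ to concentration of $\nu$ on $S_1 \cup S_2$. The remaining ingredients---affinity of entropy for convex decompositions into measures on disjoint invariant sets, and the product factor-map entropy bound---are classical and purely measure-theoretic.
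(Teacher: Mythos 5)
Your proof is correct, and while it rests on the same structural fact as the paper --- any $(g_t\x g_t)$-invariant measure $\nu$ with $\int\tilde\lambda\,d\nu=0$ must concentrate on $(\Sing\x T^1M)\cup(T^1M\x\Sing)$ --- it gets there and onward by a genuinely different route. For the concentration step, the paper argues via Poincar\'e recurrence together with \cite[Corollary 3.5]{BCFT} (if $\lambda(x)=0$ then $g_tx$ approaches $\Sing$ in forward or backward time, so such an $x$ cannot be recurrent), ruling out $\nu(\Reg\x\Reg)>0$ directly; you instead invoke \cite[Corollary 3.6]{BCFT} (the set $\{v\in\Reg:\lambda(v)=0\}$ is null for every invariant measure) applied to the two marginals, which is a packaged form of the same underlying fact and is perfectly legitimate since the marginals of a diagonally invariant measure are $(g_t)$-invariant. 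The second halves differ more substantially: the paper stays topological, applying the variational principle on the compact invariant set $K=(\Sing\x T^1M)\cup(T^1M\x\Sing)$ and bounding $P(K,\Phi)$ by $P(\Sing,\ph)+P(\ph)$ via Lemma \ref{Maximum} and Proposition \ref{Addition} (the latter relying on the spanning-set characterization of pressure), whereas you stay measure-theoretic: decompose $\nu$ over the invariant pieces $S_1$ and $S_2\setminus S_1$, use affinity of the entropy map (which in fact holds for arbitrary convex combinations, so disjointness of the supports is not needed), the joining inequality $h_{\nu_i}(g_1\x g_1)\le h_{(\pi_1)_*\nu_i}(g_1)+h_{(\pi_2)_*\nu_i}(g_1)$, and the variational principle for the subsystem $(\Sing,(g_t))$. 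Your route is more elementary in that it bypasses the topological pressure of product sets (hence Proposition \ref{Addition}) entirely; the paper's route produces the topological estimate $P(K,\Phi)\le P(\Sing,\ph)+P(\ph)$ as a by-product, which it reuses verbatim in the expansivity argument of Proposition \ref{prodexp}, so each formulation has its use.
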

\begin{proof} 
Suppose $\nu$ satisfies $\int \tilde \lambda \,d \nu =0$. If $\nu(\Reg \x \Reg)>0$, then we would have a recurrence set $A$ of positive measure with $A\subset \Reg \x \Reg$. Since $A$ is forwards and backwards recurrent, then for any $(x,y)\in A$, $d(g_tx,\Sing)$ does not converge to $0$ for $t\to\infty$, nor for $t\to -\infty$. It follows that $\lambda(x) > 0$ by Corollary 3.5 of \cite{BCFT}. The same argument shows that $\lambda(y) > 0$. Thus, $\tilde{\lambda}(x,y) > 0$ for all $(x,y) \in A$. This would imply that $\int \tilde \lambda \,d \nu >0$, which is a contradiction. Consequently, the complement of $\Reg \x \Reg$ in $T^1M \x T^1M$ carries full $\nu$-measure. In other words,
\[
\nu((\Sing \x T^1M) \cup (T^1M \x \Sing))=1.
\]
The set  $(\Sing \x T^1M) \cup (T^1M \x \Sing)$ is compact and invariant, so we can apply the variational principle. It follows, together with an application of Lemma \ref{Maximum} and Proposition \ref{Addition}, that
\[
P_\nu(\Phi) \leq P((\Sing \x T^1M) \cup (T^1M \x \Sing), \Phi) \leq P(\Sing,\ph) + P(\ph). \qedhere
\]
\end{proof}
Since $g_t$ is entropy expansive, $g_t\x g_t$ is entropy expansive, and thus the entropy map on $(T^1M\x T^1M, (g_t\x g_t))$ is upper semicontinuous.	
	Thus, combining Theorem \ref{Pressure Decrease} with Proposition \ref{upperboundfreeenergy}, and observing that $P(\Sing, \ph)+ P(\ph)< 2P(\ph)=P(\Phi)$, we have the following.
\begin{cor}
The collections $\tilde B(\eta)$ satisfy  $\lim\limits_{\eta\to 0}P(\tilde B(\eta),\Phi) <  P(\Phi)$. 
\end{cor}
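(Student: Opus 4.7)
The plan is to chain together the three ingredients assembled immediately before the corollary. First, I would apply Theorem \ref{Pressure Decrease} to the system $(T^1M\times T^1M,(g_t\times g_t))$ with the function $\tilde\lambda$. For this I need $\tilde\lambda$ to be bounded and lower semicontinuous, which is immediate since $\lambda$ is continuous and nonnegative, and I need upper semicontinuity of the entropy map on the product. The latter follows from entropy expansivity of $g_t\times g_t$: bi-infinite Bowen balls in the product factor as $\Gamma_\eps(x,y)=\Gamma_\eps(x)\times\Gamma_\eps(y)$, and by Proposition \ref{Addition} (taking $\ph_1=\ph_2=0$) the topological entropy of the product is bounded by the sum of the entropies, each of which is zero by entropy expansivity of $g_t$. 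This yields
\[
\lim_{\eta\to 0} P(\tilde B(\eta),\Phi) \leq \sup\left\{P_\nu(\Phi) : \int \tilde\lambda\,d\nu = 0\right\}.
\]

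Second, I would apply Proposition \ref{upperboundfreeenergy} to bound the right-hand side above by $P(\Sing,\ph)+P(\ph)$. Third, I would invoke the equality statement of Proposition \ref{Addition} with $\CCC=\DDD=T^1M\times[0,\infty)$ and $\ph_1=\ph_2=\ph$, which gives $P(X\times X,\Phi,\eps;\FFF\times\FFF)=2P(X,\ph,\eps;\FFF)$ for every $\eps>0$; letting $\eps\to 0$ gives $P(\Phi)=2P(\ph)$.

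Combining these three steps with the standing pressure-gap hypothesis $P(\Sing,\ph)<P(\ph)$ yields
\[
\lim_{\eta\to 0} P(\tilde B(\eta),\Phi) \leq P(\Sing,\ph)+P(\ph) < 2P(\ph) = P(\Phi),
\]
which is the strict inequality claimed. There is no genuine obstacle in this argument: all of the non-trivial work has already been done in Proposition \ref{upperboundfreeenergy} (the analysis of measures with $\int\tilde\lambda\,d\nu=0$, where one has to rule out $\nu(\Reg\times\Reg)>0$ via the recurrence argument) and in Theorem \ref{Pressure Decrease} (which relies on upper semicontinuity of the entropy map and Proposition \ref{bcftest}). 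The present corollary is essentially a bookkeeping consequence, the one pleasant observation being that the factor of $2$ in $P(\Phi)=2P(\ph)$ is exactly what is needed to absorb the defect $P(\ph)-P(\Sing,\ph)>0$ on the singular side and produce a strict gap for the product system.
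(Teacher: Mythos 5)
Your proposal is correct and follows essentially the same route as the paper: apply Theorem \ref{Pressure Decrease} to the product system with $\tilde\lambda$ (using upper semicontinuity of the entropy map, which comes from entropy expansivity of $g_t\times g_t$), bound the resulting supremum by $P(\Sing,\ph)+P(\ph)$ via Proposition \ref{upperboundfreeenergy}, and conclude with $P(\Sing,\ph)+P(\ph)<2P(\ph)=P(\Phi)$, where the equality is the equal-case of Proposition \ref{Addition}. Your explicit justification of entropy expansivity of the product via the factorization $\Gamma_\eps(x,y)=\Gamma_\eps(x)\times\Gamma_\eps(y)$ is a slightly more detailed version of what the paper states in one line, but the argument is the same.
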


The pressure estimate we need on the $\tilde \lambda$-decomposition is immediate from this and Lemma \ref{BracketRem}. 
\begin{cor}
If $\eta>0$ is chosen sufficiently small, then  the $\tilde \lambda$-decomposition at scale $\eta$, which we denote by  $(\tilde \PPP, \tilde \GGG, \tilde \SSS)$, satisfies $P([\tilde \PPP] \cup [ \tilde \SSS], \Phi)< P(\Phi)$.
	\end{cor}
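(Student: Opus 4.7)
The plan is to assemble the statement as a bookkeeping combination of the preceding corollary and a bracket-removal step that mirrors Lemma \ref{BracketRem}. By construction of the $\tilde\lambda$-decomposition we have $\tilde\PPP = \tilde\SSS = \tilde B(\eta)$, so Lemma \ref{Maximum} immediately gives
\[
P([\tilde\PPP]\cup [\tilde\SSS], \Phi) \;=\; \max\{P([\tilde\PPP], \Phi),\, P([\tilde\SSS], \Phi)\} \;=\; P([\tilde B(\eta)], \Phi).
\]
Thus it suffices to produce an $\eta > 0$ for which $P([\tilde B(\eta)], \Phi) < P(\Phi)$.

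Next I would invoke the product-system analog of Lemma \ref{BracketRem}. The proof of that lemma uses only non-negativity and boundedness of $\lambda$ together with the defining average condition on $B(\eta)$; nothing in the argument is specific to the base flow or to $\lambda$ itself. Since $\tilde\lambda = \lambda\otimes\lambda$ is bounded and non-negative on $T^1M\x T^1M$, the same computation carried out on the product flow $(T^1M\x T^1M, (g_t\x g_t))$ yields, for every $\eps > 0$,
\[
P([\tilde B(\eta)], \Phi) \;\leq\; P(\tilde B(\eta+\eps), \Phi).
\]
Combined with the preceding corollary, which supplies $\lim_{\eta\to 0} P(\tilde B(\eta), \Phi) < P(\Phi)$, one concludes by fixing $\eta_0 > 0$ small enough that $P(\tilde B(\eta_0), \Phi) < P(\Phi)$ and then setting $\eta = \eta_0/2$ and $\eps = \eta_0/2$, so that $P([\tilde B(\eta)], \Phi) \le P(\tilde B(\eta_0), \Phi) < P(\Phi)$.

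There is no substantive obstacle in this argument; the only thing worth double-checking is that the proof of Lemma \ref{BracketRem} genuinely ports over to the product setting with $\tilde\lambda$ in place of $\lambda$, which it does verbatim since the lemma's proof is purely a statement about the averaged integral of a bounded non-negative function. Everything else is a direct assembly of results already established in the preceding two corollaries and in Lemma \ref{Maximum}.
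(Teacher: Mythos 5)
Your proof is correct and follows the paper's own route: the paper likewise obtains this corollary immediately by combining the preceding corollary ($\lim_{\eta\to 0}P(\tilde B(\eta),\Phi)<P(\Phi)$) with Lemma \ref{BracketRem}, noting $\tilde\PPP=\tilde\SSS=\tilde B(\eta)$. Your only extra remark—that Lemma \ref{BracketRem} works in the product setting—is not even an issue to check, since that lemma is stated for an arbitrary flow with a bounded nonnegative (lower semicontinuous) $\lambda$, so it applies verbatim to $(T^1M\x T^1M,(g_t\x g_t))$ with $\tilde\lambda$ and $\Phi$.
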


This shows that we have the decomposition structure we need. We also verify that we have the expansivity property we require for an equilibrium state for $\Phi$.	
	\begin{prop} \label{prodexp}
Suppose that $P(\Sing, \ph)< P(\ph)$. Then any equilibrium state $\nu$ for $\Phi$ is product expansive.
\end{prop}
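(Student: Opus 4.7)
My plan is to reduce product expansivity for $\nu$ to almost expansivity of its marginals, and then invoke the almost expansivity of equilibrium states for $\ph$ already available from \cite{BCFT}.

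Let $\nu$ be an equilibrium state for $\Phi$, let $\pi_1, \pi_2\colon T^1M\x T^1M \to T^1M$ be the coordinate projections, and set $\nu_i = (\pi_i)_*\nu$. Each $\nu_i$ is a $(g_t)$-invariant Borel probability on $T^1M$. The first main step is to show that $\nu_1$ and $\nu_2$ are equilibrium states for $\ph$. This is where the classical entropy-of-joinings inequality enters: for any $T_1$-invariant measure $\nu_1$ and $T_2$-invariant $\nu_2$, any joining $\nu$ of $\nu_1$ and $\nu_2$ for $T_1\x T_2$ satisfies $h_\nu(T_1\x T_2)\leq h_{\nu_1}(T_1) + h_{\nu_2}(T_2)$ (check this by using partitions of the form $\AAA\x\BBB$). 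Applying this to $f_1\x f_1$ together with $\int \Phi\,d\nu = \int\ph\,d\nu_1 + \int\ph\,d\nu_2$ gives
\[
P(\Phi) = h_\nu(f_1\x f_1) + \int \Phi\,d\nu \leq P_{\nu_1}(\ph) + P_{\nu_2}(\ph) \leq 2P(\ph) = P(\Phi).
\]
Equality forces $P_{\nu_i}(\ph) = P(\ph)$ for $i=1,2$, so both marginals are equilibrium states for $\ph$.

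The second step uses the standing assumption (recorded at the start of this section) that under the pressure gap $P(\Sing,\ph)<P(\ph)$, any equilibrium state for $\ph$ is almost expansive at scale $\eps$, by \cite[Lemma 5.3]{BCFT}. Hence $\nu_i(\NE(\eps))=0$ for $i=1,2$.

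Finally, apply Lemma~\ref{NEx}, which identifies
\[
\NE^\x(\eps) = (T^1M\x \NE(\eps))\cup(\NE(\eps)\x T^1M).
\]
Since $\nu(T^1M\x \NE(\eps)) = \nu_2(\NE(\eps))$ and $\nu(\NE(\eps)\x T^1M) = \nu_1(\NE(\eps))$, we conclude $\nu(\NE^\x(\eps))=0$, i.e.\ $\nu$ is product expansive.

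The one nonroutine step is checking that the marginals are themselves equilibrium states; everything else is a clean consequence of Lemma~\ref{NEx} and almost expansivity on each factor. The joining entropy inequality is standard but should be stated explicitly so that the reader sees why the saturation of the variational principle on $\nu$ forces saturation on each marginal.
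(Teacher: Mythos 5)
Your proof is correct, but it takes a genuinely different route from the paper. The paper's argument stays entirely on the product side: it uses \cite[Lemma 5.3]{BCFT} to get $\NE(\eps)\subset\Sing$, hence $\NE^\x(\eps)\subset K:=(\Sing\x T^1M)\cup(T^1M\x\Sing)$, then shows via Lemma \ref{Maximum} and Proposition \ref{Addition} that $P(K,\Phi)=P(\Sing,\ph)+P(\ph)<P(\Phi)$, so an ergodic equilibrium state cannot charge the invariant set $K$ (else the variational principle on $K$ would contradict $P_\nu(\Phi)=P(\Phi)$), and finally handles non-ergodic $\nu$ by ergodic decomposition. You instead push $\nu$ down to its marginals, use the joining entropy inequality $h_\nu(f_1\x f_1)\le h_{\nu_1}(f_1)+h_{\nu_2}(f_1)$ together with $P(\Phi)=2P(\ph)$ (available from Proposition \ref{Addition}) to force each marginal to be an equilibrium state for $\ph$, and then transfer almost expansivity of the factors through Lemma \ref{NEx}. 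Both are complete; yours avoids the ergodic decomposition and any pressure estimate on the singular cross $K$, and it is in a sense more general: it shows that whenever every equilibrium state for $\ph$ is almost expansive at scale $\eps$, every equilibrium state for $\Phi$ is automatically product expansive at the same scale, with no further geometric input --- which is relevant to the paper's remark in \S\ref{s.general} that some hypotheses of Theorem \ref{general} are likely redundant. The paper's route, by contrast, reuses exactly the same pressure computation as Proposition \ref{upperboundfreeenergy} and needs nothing beyond the tools already set up for the decomposition argument. If you write yours up, do state and prove (or precisely cite) the joining entropy inequality via product partitions, and make explicit that $P(\Phi)=2P(\ph)$ is being invoked.
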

\begin{proof} 
First, we assume that $\nu$ is an ergodic equilibrium state. 
	 By Lemma 5.3 of \cite{BCFT},  for sufficiently small $\eps$, $\NE(\eps)\subset \Sing$. It thus follows from Lemma \ref{NEx} that
	\[
	\NE^\x(\eps) \subset (\Sing \times T^1M) \cup (T^1M \times \Sing).
	\]
Since $K:= (\Sing \times T^1M) \cup (T^1M \times \Sing)$ is $(g_t \x g_t)$-invariant, it has measure $1$ or $0$.
By Lemma \ref{Maximum}, Proposition \ref{Addition}, and our hypothesis,
$$P(K, \Phi) \leq P(\Sing, \ph)+ P(\ph)< 2P(\ph)=P(\Phi).$$
If $\nu(K)=1$, then by the variational principle, $P_\nu(\Phi) \leq P(K, \Phi) < P(\Phi)$, which contradicts $\nu$ being an equilibrium state for $\Phi$. Therefore, we must have $\nu(K)=0$. It follows that $\nu(\NE^\x(\eps))=0$, and so $\nu$ is product expansive.

Now suppose that $\nu$ is not ergodic. Then every measure in its ergodic decomposition is also an equilibrium state. By the argument above, each of these measures give measure $0$ to $\NE^\x(\eps)$. Thus, $\nu (\NE^\x(\eps))=0$.
\end{proof}

\section{Uniqueness of the equilibrium state on the product system} \label{productproof}
In this section, we continue to assume the hypotheses of Theorem \ref{thmA}. We write $\mu$ for the unique equilibrium state for $(T^1M,(g_t),\ph)$ provided by Theorem \ref{CT}.  We showed in Lemma \ref{ProductES} that  $\mu\x\mu$ is an equilibrium measure for $\Phi$. We show that $\mu\x\mu$  is ergodic and rule out any mutually singular equilibrium states.

	\subsection{Weak mixing for $\mu$} We show that the equilibrium state $\mu$ for $(g_t)$ is weak mixing using spectral techniques, adapting an idea from Bowen in \cite{rB72}. This is equivalent to ergodicity of $\mu \x \mu$. To carry out this strategy, we need to obtain a partial mixing estimate for ``flowed out'' positive measure sets. An estimate of this type appears in \cite{eF77} in the case of uniform specification, but it was not established in the non-uniform setting considered by \cite{CT}. Our argument is a sharpened version of the ergodicity proof in \cite{CT}. We use the following lemma, which is essentially Lemma 4.17 from \cite{CT}.

\begin{lem}\label{JointGibbs}
Assuming that $\FFF$ has the specification property on $\GGG^M$ with specification constant $\tau_M$ for all $M\in\R$, then for large $M$, there exists $Q_M' > 0$ such that for each $(x_1,t_1), (x_2,t_2)\in\GGG^M$ with $t_1, t_2\geq T(M)$ and each $q\geq 2\tau_M$, we have
$$\mu(B_{t_1}(x_1,\rho)\cap f_{-(t_1 + q)}B_{t_2}(x_2,\rho))\geq Q'_Me^{-(t_1 + t_2)P(\ph) + \int_{0}^{t_1}\ph(f_sx_1)\,ds \int_{0}^{t_2}\ph(f_sx_2)\,ds}.$$
\end{lem}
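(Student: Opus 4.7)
The plan is to adapt the proof of \cite[Lemma 4.17]{CT} to the flow setting by combining the one-sided lower Gibbs bound for $\mu$ on $\GGG^M$-segments with the specification property, producing a single shadowing orbit that simultaneously tracks $x_1$ and $x_2$. The starting point, extracted from the proof of Theorem \ref{CT}, is the following Gibbs lower bound: for $M$ large there exist $Q_M>0$ and $T(M)$ such that for every $(x,t)\in\GGG^M$ with $t\geq T(M)$,
$$\mu(B_t(x,\rho/2))\geq Q_M\exp\!\Big(-tP(\ph)+\int_0^t\ph(f_sx)\,ds\Big).$$
This is the standard lower Gibbs estimate on the good core, and it reduces the problem to locating a single orbit segment in $\GGG^{M'}$ whose Bowen ball lies inside the intersection.

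Given $(x_1,t_1),(x_2,t_2)\in\GGG^M$ and $q\geq 2\tau_M$, I would apply specification on $\GGG^M$ to produce a shadowing point $y$ with $d_{t_1}(y,x_1)<\rho/2$ and $d_{t_2}(f_{t_1+q}y,x_2)<\rho/2$. The Bowen ball $B_{t_1+q+t_2}(y,\rho/2)$ is then contained in $B_{t_1}(x_1,\rho)\cap f_{-(t_1+q)}B_{t_2}(x_2,\rho)$, so a lower bound on $\mu(B_{t_1+q+t_2}(y,\rho/2))$ suffices. To apply the Gibbs estimate to $y$, I would verify that $(y,t_1+q+t_2)\in\GGG^{M'}$ for some $M'=M'(M,\tau_M,q)$: for the $\lambda$-decomposition of \cite{BCFT}, the lower bounds on $\lambda$-averages near the endpoints of $[0,t_1+q+t_2]$ transfer from $x_1$ and $x_2$ to $y$ by continuity of $\lambda$ and shadowing at scale $\rho/2$, while any defect together with the gap of length $q$ can be absorbed into an enlarged prefix/suffix budget $M'$.

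Finally, I would invoke the Bowen property of $\ph$ on $\GGG$ to split $\int_0^{t_1+q+t_2}\ph(f_sy)\,ds$ into the two desired Birkhoff integrals along $x_1$ and $x_2$, up to an additive error $2V(\GGG,\ph,\rho)+q(\|\ph\|_\infty+|P(\ph)|)$ coming from the Bowen comparison and the contribution of the gap $[t_1,t_1+q]$. Absorbing this error and the extra $-qP(\ph)$ term into a new constant $Q_M'$ (which may depend on $q$ as well as on $M$) yields the desired inequality. The main obstacle is the bookkeeping required to verify that $(y,t_1+q+t_2)\in\GGG^{M'}$; this is analogous to the argument in \cite[\S4]{CT} and is made transparent here by the continuity of $\lambda$ built into the $\lambda$-decomposition formalism of \S\ref{lambdaestimate}.
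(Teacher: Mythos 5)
There is a genuine gap, and it sits exactly where you locate the ``main obstacle'': the claim that the glued segment $(y,t_1+q+t_2)$ lies in $\GGG^{M'}$ with $M'=M'(M,\tau_M,q)$. In a $\lambda$-decomposition the good core must satisfy $\frac{1}{\rho}\int_0^\rho\lambda\geq\eta$ for \emph{every} intermediate window length $\rho$, measured from both ends of the core. The transition stretch of length $q$ produced by specification sits in the \emph{middle} of the glued segment, and specification gives no control on $\lambda$ there (the connecting orbit may pass arbitrarily close to $\Sing$, where $\lambda=0$). Consequently, windows that start in the $x_1$-part or end in the $x_2$-part but reach across the gap have their $\lambda$-average diluted below any fixed threshold once $q$ is large compared with $t_1,t_2$; a prefix/suffix budget can only excise bad stretches at the two ends, so it cannot ``absorb'' this middle defect, and the only way to excise it is to declare the entire first block of length about $t_1+q$ to be prefix, which forces $M'\gtrsim t_1+q$ and destroys any uniform constant. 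Continuity of $\lambda$ and shadowing at scale $\rho/2$ only degrade the threshold from $\eta$ to $\eta-\omega(\rho)$ on the tracked pieces; they say nothing about the gap. Moreover, even granting the membership, your error budget $2V(\GGG,\ph,\rho)+q(\|\ph\|_\infty+|P(\ph)|)$ and your admission that $Q_M'$ ``may depend on $q$'' contradict the statement being proved: the lemma asserts a constant depending on $M$ alone, uniformly over all $q\geq 2\tau_M$, and that uniformity is what lets the paper run the argument of \cite[Proposition 4.19]{CT} verbatim in the light-mixing step. A single glued Bowen ball of length $t_1+q+t_2$ simply has measure that decays exponentially in $q$, so no single-orbit application of the Gibbs bound can give the stated inequality.

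The paper's route is different and avoids this entirely: Lemma \ref{JointGibbs} is \cite[Lemma 4.17]{CT}, whose proof works at the level of the construction of $\mu$ from weighted $(t,\gamma)$-separated sets (the same mechanism as the one-ball Gibbs bound \cite[Lemma 4.16]{CT}), gluing the two segments $(x_1,t_1)$, $(x_2,t_2)$ to \emph{all} choices of filler orbit segments inserted before, after, and inside the gap. Summing over the exponentially many fillers in the gap contributes a factor comparable to $e^{(q-2\tau_M)P(\ph)}$, which cancels the $e^{-qP(\ph)}$ coming from the longer total time and yields a constant independent of $q$ (and of $t_1,t_2$). The only adjustment made in this paper is that strong specification, with arbitrary prescribed transition times $\geq\tau_M$, realizes the gap as exactly $q$ rather than some $q'\in[q-2\tau_M,q]$. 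If you want a self-contained proof, that counting-with-fillers argument is the one to reproduce; the shadow-one-orbit-and-apply-Gibbs shortcut does not close.
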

In \cite{CT}, the statement only gives the existence of some $q'$ satisfying this inequality in each interval $[q-2\tau_M,q]$. We are able to omit this condition because we are working with specification (i.e. exact transition times between orbit segments) as opposed to weak specification (i.e. an upper bound on the transition time).

\begin{prop}
		Let $s > 0$ be arbitrary. For all sets $A,B$ of positive measure, for large enough $t$, $\mu(f_{[-3s,3s]}A\cap f_tf_{[-3s,3s]}B) > 0$.
	\end{prop}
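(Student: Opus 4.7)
The plan is to combine the approximation result for almost expansive measures (Proposition~\ref{InvariantApprox}) with the joint Gibbs bound (Lemma~\ref{JointGibbs}), controlling the relevant partition sums via the counting lemma (Lemma~\ref{CombinedLemma}). The key point is that Lemma~\ref{JointGibbs} uses exact specification, so the gap $q$ can be prescribed to be any real number at least $2\tau_M$; this is what lets us reach every sufficiently large $t$. For the constants, set $\alpha_A=\mu(A)/3$ and $\alpha_B=\mu(B)/3$, extract $M\in[0,\infty)$ and $C_{\alpha_A},C_{\alpha_B}>0$ from Lemma~\ref{CombinedLemma}, and $Q_M,\tau_M$ from Lemma~\ref{JointGibbs}. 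Define $c:=Q_M C_{\alpha_A}C_{\alpha_B}$ and fix $\kappa\in(0,c/2)$. Choose $\rho>0$ small enough that $\mu(\NE(\rho))=0$, $P(\ph,\rho)=P(\ph)$, and the scales in the three lemmas are compatible.

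For the approximation step, Proposition~\ref{InvariantApprox} yields $t_0$ such that for every $t'\geq t_0$ and any adapted partition $\s{A}_{t'}$ of a maximizing $(t',\rho)$-separated set $E_{t'}$, there exist $U_A,U_B\subset \s{A}_{t'}$ satisfying $\mu(f_{t'/2}U_A\setminus f_{[-3s,3s]}A)<\kappa$, $\mu(A\setminus f_{t'/2}U_A)<\alpha_A$, and analogous bounds for $B$. Let $E_A',E_B'\subset E_{t'}$ collect the centers of the partition elements comprising $U_A,U_B$. By $\FFF$-invariance $\mu(U_A)=\mu(f_{t'/2}U_A)\geq \mu(A)-\alpha_A>\alpha_A$, so Lemma~\ref{CombinedLemma} applies (and similarly for $B$) to give
\[
\sum_{x\in E_A'\cap \GGG^M}e^{\int_0^{t'}\ph(f_sx)\,ds}\geq C_{\alpha_A}e^{t'P(\ph)},\qquad \sum_{y\in E_B'\cap \GGG^M}e^{\int_0^{t'}\ph(f_sy)\,ds}\geq C_{\alpha_B}e^{t'P(\ph)}.
\]

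Now fix any $t\geq t_0+2\tau_M$, set $t'=t_0$ and $q=t-t_0\geq 2\tau_M$. For each pair $(x_A,x_B)\in(E_A'\cap \GGG^M)\times(E_B'\cap \GGG^M)$, Lemma~\ref{JointGibbs} with $x_1=x_B,\ x_2=x_A,\ t_1=t_2=t'$ lower bounds $\mu(B_{t'}(x_B,\rho)\cap f_{-(t'+q)}B_{t'}(x_A,\rho))$ by $Q_M\exp\bigl(-2t'P(\ph)+\int_0^{t'}\ph(f_sx_A)\,ds+\int_0^{t'}\ph(f_sx_B)\,ds\bigr)$. The partition elements $w_{x_\bullet}$ contain the half-scale Bowen balls and the $w_{x_A}$ (respectively $w_{x_B}$) are pairwise disjoint, so the intersections $w_{x_B}\cap f_{-t}w_{x_A}$ are pairwise disjoint across distinct pairs. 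Summing and applying the counting bound twice yields $\mu(U_B\cap f_{-t}U_A)\geq c$, and by $\FFF$-invariance this equals $\mu(f_{t'/2}U_A\cap f_tf_{t'/2}U_B)$. Removing the approximation error (at most $\kappa$ for each of $U_A$ and $U_B$, using $\FFF$-invariance of $\mu$ to push the error on $U_B$ through $f_t$), we obtain $\mu(f_{[-3s,3s]}A\cap f_tf_{[-3s,3s]}B)\geq c-2\kappa>0$ for every $t\geq t_0+2\tau_M$. The main subtlety is the sign and scale bookkeeping that links the forward-looking Joint Gibbs statement (with prescribed gap $q$) to the flow-out intersection we must control; the exact-specification formulation of Lemma~\ref{JointGibbs}, as opposed to a mere upper bound on transition times, is what permits setting $q=t-t'$ for every large $t$ and thus obtaining a uniform positive lower bound.
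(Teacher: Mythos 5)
Your proposal is correct and follows essentially the same route as the paper: approximate the flowed-out sets by unions of adapted partition elements via Proposition \ref{InvariantApprox}, lower-bound the partition sums over the good core using Lemma \ref{CombinedLemma}, and sum the exact-gap Gibbs estimates of Lemma \ref{JointGibbs} over pairs of centers before removing the approximation error, which is precisely the argument the paper invokes from \cite[Proposition 4.19]{CT}. The only (harmless) variation is that you fix the orbit-segment length $t'$ and let the gap $q=t-t'$ grow (so $t'$ should also be taken above the thresholds in Lemmas \ref{CombinedLemma} and \ref{JointGibbs}, and the Gibbs balls at half the separation scale), whereas the paper lets the segment length grow; both rest on the same exact-transition-time feature of specification that the paper highlights.
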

	\begin{proof}
		Let $P = f_{[-3s,3s]}A$ and $Q = f_{[-3s,3s]}B$ and let $2\alpha_1 = \min\{\mu(A),\mu(B)\}$. Now take $2\rho$ small enough so that we can apply Proposition \ref{InvariantApprox} with $\alpha$ as $\alpha_1$ and Lemma \ref{CombinedLemma}. Let $\s{A}_t$ be adapted partitions for $(t,2\rho)$-separated sets $E_t$ which are maximizing for $\Lambda_t(T^1M, \ph, 2\rho)$. Then we can take $U_t\subset\s{A}_t$ and $V_t\subset \s{A}_t$ such that $\liminf \mu(A\setminus f_{t/2}U_t) \leq \alpha_1$ and $\liminf \mu(B\setminus f_{t/2}V_t) \leq \alpha_1$. Thus, for large $t$, we have $\mu(U_t)= \mu(f_{t/2}U_t) \geq \alpha_1$, where the first equality is because the measure is invariant. We get the same estimate for $V_t$.

		From here, the argument from \cite[Proposition 4.19]{CT} applies using Lemma \ref{JointGibbs}, and using Lemma \ref{CombinedLemma} in place of Lemmas 4.18 and 4.8 of \cite{CT},  to show that for all $q > 2\tau_M$, where $\tau_M$ is the specification constant, and for all large $t$,
				$\mu(U_t\cap f_{-(t + q)}V_t) \geq \beta$
		for some constant $\beta > 0$. Now, by Proposition \ref{InvariantApprox}, choose $t_0$ such that for all $t\geq t_0$, we have
		$$\mu(f_{t/2}U_t \setminus P) < \frac{\beta}{2} \text{ and }\mu(f_{t/2}V_t\setminus Q) < \frac{\beta}{2}.$$
		Then, observe that for all $t,r > 0$, we have
		$$(f_{t/2}U_t\cap f_{-r}f_{t/2}V_t)\setminus (P\cap f_{-r}Q) \subset (f_{t/2}U_t\setminus P) \cup f_{-r}(f_{t/2}V_t\setminus Q).$$
		It follows that, writing $r := t + 3\tau_M$,
		\begin{align*}
		\mu(P\cap f_{-r}Q) &\geq \mu(f_{t/2}U_t\cap f_{-r}f_{t/2}V_t) - \mu(f_{t/2}U_t\setminus P) - \mu(f_{-r}(f_{t/2}U_t\setminus Q))
		\\
		&> \mu(U_t\cap f_{-r}V_t) - \beta \geq 0. \qedhere
		\end{align*} 
	\end{proof}

	We are ready to show that $\mu$ is weak mixing, and thus $\mu \times \mu$ is ergodic.
	\begin{thm}\label{Weak Mixing}
		The measure $\mu$ is weak mixing.
	\end{thm}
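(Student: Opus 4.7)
The plan is a spectral contradiction argument, following the Bowen-style approach indicated in the text and exploiting the partial mixing estimate just proved. Since $\mu$ is the unique equilibrium state for $\varphi$ it is ergodic. Suppose for contradiction that $\mu$ is not weak mixing. Then by the standard spectral characterization of weak mixing for measure-preserving flows, there exist $\alpha \in \RR\setminus\{0\}$ and a measurable $f\colon T^1M \to \mathbb{C}$ with $|f|=1$ $\mu$-a.e.\ satisfying $f \circ g_t = e^{i\alpha t} f$ $\mu$-a.e.\ for every $t\in\RR$. Using this eigenvalue relation one computes $f_\ast \mu(e^{-i\alpha t}W) = f_\ast\mu(W)$ for every Borel $W \subset \mathbb{S}^1$ and every $t$, so $f_\ast\mu$ is invariant under the full rotation group of $\mathbb{S}^1$ and hence equals normalized Lebesgue measure on $\mathbb{S}^1$.

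Next I would calibrate the scales so that the conclusion of the previous proposition leads to a contradiction. Fix $s>0$ with $6s|\alpha|<\pi/4$, and pick two disjoint closed arcs $U_1,U_2\subset\mathbb{S}^1$, each of length less than $\pi/4$. By rotation invariance of $f_\ast\mu$, both sets $A:=f^{-1}(U_1)$ and $B:=f^{-1}(U_2)$ have positive $\mu$-measure. Introduce the thickened arcs
\[
\widetilde U_j := \bigcup_{r\in[-3s,3s]} e^{i\alpha r}U_j, \qquad j=1,2,
\]
each an arc of length at most $|U_j|+6s|\alpha| < \pi/2$. From $f\circ g_\tau=e^{i\alpha\tau}f$ one gets, modulo $\mu$-null sets, $g_\tau f^{-1}(V)=f^{-1}(e^{i\alpha\tau}V)$ for every Borel $V\subset\mathbb{S}^1$, so
\[
f_{[-3s,3s]}A \subset f^{-1}(\widetilde U_1) \quad\text{and}\quad g_\tau f_{[-3s,3s]}B \subset f^{-1}\bigl(e^{i\alpha\tau}\widetilde U_2\bigr).
\]

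Because $\widetilde U_1$ is an arc of length strictly less than $\pi/2$ and $e^{i\alpha\tau}\widetilde U_2$ is an arc of the same length rotating continuously and with period $2\pi/|\alpha|$ in $\tau$, the set $\{\tau\in\RR:e^{i\alpha\tau}\widetilde U_2\cap\widetilde U_1=\emptyset\}$ is a nonempty open periodic subset of $\RR$, hence unbounded above. For any such arbitrarily large $\tau$,
\[
\mu\bigl(f_{[-3s,3s]}A \cap g_\tau f_{[-3s,3s]}B\bigr) \leq \mu\bigl(f^{-1}(\widetilde U_1\cap e^{i\alpha\tau}\widetilde U_2)\bigr) = 0,
\]
directly contradicting the partial mixing estimate proved just above. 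Therefore no such eigenfunction exists and $\mu$ is weak mixing. The main obstacle is the geometric bookkeeping on the circle: the $[-3s,3s]$ flow-out spreads each arc by an angle of $6s|\alpha|$, so one must choose $s$ small and the arcs $U_j$ short enough that both fattened arcs fit strictly inside a half-circle, ensuring that a rotated copy $e^{i\alpha\tau}\widetilde U_2$ really can be slid into the nonempty complement of $\widetilde U_1$ for an unbounded set of $\tau$.
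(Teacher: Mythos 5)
Your overall strategy is exactly the paper's: assume a nonzero eigenvalue, push the eigenfunction relation into preimages of arcs on the circle, flow those sets out by $[-3s,3s]$, and exhibit an unbounded set of times $\tau$ at which the flowed-out sets are $\mu$-essentially disjoint, contradicting the partial mixing proposition. The cosmetic differences (you use two short arcs and the fact that $f_*\mu$ is Lebesgue on the circle; the paper uses a single closed disk $B$ with $0<\mu(\ph^{-1}(B))<1$, a time $t_0$ with $B\cap e^{i\theta t_0}B=\emptyset$, and the times $\tfrac{2\pi n}{\theta}+t_0$) are immaterial, and your circle bookkeeping (total arc length plus $6s|\alpha|$ kept below $\pi/2$, periodicity of the disjointness set in $\tau$) is fine.

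There is, however, one step that does not follow as written. You only have $f\circ g_r = e^{i\alpha r} f$ $\mu$-a.e.\ \emph{for each fixed} $r$, and hence $g_r f^{-1}(V) = f^{-1}(e^{i\alpha r}V)$ only modulo a null set $N_r$ depending on $r$. The containments you then assert, e.g.\ $f_{[-3s,3s]}A \subset f^{-1}(\widetilde U_1)$ modulo null, involve a union over the uncountable family $r\in[-3s,3s]$, and an uncountable union of null sets need not be null (nor even measurable), so the displayed inequality $\mu\bigl(f_{[-3s,3s]}A\cap g_\tau f_{[-3s,3s]}B\bigr)\le \mu\bigl(f^{-1}(\widetilde U_1\cap e^{i\alpha\tau}\widetilde U_2)\bigr)$ is not justified from the a.e.\ relation alone. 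This is precisely why the paper inserts a preliminary step: by a Fubini argument (citing Rokhlin, \S 5.4) one may replace $f$ by a version satisfying $f(g_t x)=e^{i\alpha t}f(x)$ for \emph{every} $x$ and $t$, after which the inclusions of flow-outs hold exactly, not just modulo null sets. Adding that upgrade (or some equivalent device) closes the gap; the rest of your argument then goes through.
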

	
	\begin{proof}
		Suppose not. Then $\FFF$ has a non-zero eigenvalue $\theta$, and so there exists a non-constant $\ph\in L^2(\mu)$ and $\theta\neq 0$ such that for all $t$, 
		$\ph(f_tx) = e^{i\theta t}\ph(x)$  almost everywhere.
 		Applying Fubini's theorem as in \cite[\S 5.4]{Rok}, we can choose $\ph$ such that $\ph(f_tx) = e^{i\theta t}\ph(x)$ for all $x$ and $t$. Now, because $\ph$ is not constant, there exists a closed disk $B$ and $t_0$ such that $0 < \mu(\ph^{-1}(B)) < 1$ and $B\cap e^{i\theta t_0}B = \es$. For small $\beta > 0$, we have that
		$e^{i\theta[-\beta,\beta]}B \cap e^{i\theta[t_0 - \beta, t_0 + \beta]}B = \es$
		which implies that  
		$$f_{[-\beta,\beta]}\ph^{-1}(B)\cap f_{\frac{2\pi n}{\theta} + t_0}f_{[-\beta,\beta]}\ph^{-1}(B) = \es$$
		for all $n\in\ZZ$. Taking $s = \frac{\beta}{3}$ in the previous proposition, this is a contradiction, and our proof is complete.
	\end{proof}

	\subsection{No mutually singular equilibrium state}
		Recall that we have a decomposition $(\tilde \PPP, \tilde \GGG, \tilde \SSS)$ such that $\tGGG$ has the specification property and the Bowen property for $\Phi$, and $P([\tPPP] \cup [\tSSS], \Phi)< P(\Phi)$. We also have $P(\Phi, \gamma) = P(\Phi)$ for small $\gamma$ by entropy expansivity. We have the following lemma. 
		
			\begin{lem}\label{Gibbs}
				The measure $\mu \times \mu$ has the Gibbs property on $\tilde \GGG^M$.
			\end{lem}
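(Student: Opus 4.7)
The strategy is to reduce everything to the one-factor Gibbs property for $\mu$, using the product structure of Bowen balls and of the product measure, together with the containment $\tilde\GGG^M\subset\GGG^{M'}\x\GGG^{M'}$ supplied by Lemma \ref{subsetprod}.

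First I would recall that the proof of Theorem \ref{CT} in \cite{CT} actually produces a (weak) Gibbs property for the unique equilibrium state on every sublevel $\GGG^M$ of the decomposition: for each $M\geq 0$, there exists a scale $\rho=\rho(M)>0$ and constants $0<Q_1(M)\leq Q_2(M)<\infty$ such that for all $(x,t)\in\GGG^M$,
\[
Q_1(M)\,e^{-tP(\ph)+\int_0^t\ph(g_sx)\,ds}\;\leq\;\mu(B_t(x,\rho))\;\leq\;Q_2(M)\,e^{-tP(\ph)+\int_0^t\ph(g_sx)\,ds}.
\]
(The upper bound is immediate from Proposition \ref{bcftest} plus the Bowen property, and the lower bound is the content of Lemma \ref{JointGibbs} applied with trivial specification input; these are the standard outputs of the CT machinery for the rank one setting of \cite{BCFT}.)

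Next, fix $M$ and let $((x,y),t)\in\tilde\GGG^M$. By Lemma \ref{subsetprod}, after a harmless rescaling of the $\lambda$-decomposition constant by $\|\lambda\|^{-1}$, we have $\tilde\GGG^M\subset\GGG^M\x\GGG^M$, so both $(x,t)$ and $(y,t)$ lie in $\GGG^M$ for the corresponding one-factor decomposition. Choose the scale $\tilde\rho\leq\rho(M)$ and note that in the product metric $\tilde d$ the Bowen balls factor:
\[
B_t\bigl((x,y),\tilde\rho\bigr)=B_t(x,\tilde\rho)\x B_t(y,\tilde\rho).
\]
Since $\mu\x\mu$ is a product measure, this gives
\[
(\mu\x\mu)\bigl(B_t((x,y),\tilde\rho)\bigr)=\mu(B_t(x,\tilde\rho))\cdot\mu(B_t(y,\tilde\rho)).
\]

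Now I would apply the one-factor Gibbs estimate to each factor. Using $P(\Phi)=2P(\ph)$ and
\[
\int_0^t\Phi\bigl((g_s\x g_s)(x,y)\bigr)\,ds=\int_0^t\ph(g_sx)\,ds+\int_0^t\ph(g_sy)\,ds,
\]
multiplying the two one-factor inequalities yields
\[
Q_1(M)^2\,e^{-tP(\Phi)+\int_0^t\Phi((g_s\x g_s)(x,y))\,ds}\;\leq\;(\mu\x\mu)\bigl(B_t((x,y),\tilde\rho)\bigr)\;\leq\;Q_2(M)^2\,e^{-tP(\Phi)+\int_0^t\Phi((g_s\x g_s)(x,y))\,ds},
\]
which is the desired Gibbs property on $\tilde\GGG^M$ at scale $\tilde\rho$ with constants $Q_1(M)^2,Q_2(M)^2$.

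The main (minor) obstacle is really bookkeeping: one has to be precise about which $\lambda$-decomposition constants match up with which $M$-level in Lemma \ref{subsetprod}, and pick the product scale $\tilde\rho$ small enough that the one-factor Gibbs bounds apply to \emph{both} coordinates simultaneously. There is no new dynamical input — once the one-factor Gibbs property is granted, the lemma is essentially the tautological statement that Gibbs bounds multiply on products.
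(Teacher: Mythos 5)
Your argument is essentially the paper's own proof: it combines the one-factor Gibbs lower bound for $\mu$ on $\GGG^M$ (Lemma 4.16 of \cite{CT}), the containment $\tilde\GGG^M\subset\GGG^M\x\GGG^M$ from Lemma \ref{subsetprod}, and the factorization of Bowen balls and of the product measure, then multiplies the two one-factor estimates. Note that the lemma as stated and used in the paper only requires the lower Gibbs bound, so your additional two-sided claim --- whose upper-bound justification via Proposition \ref{bcftest} plus the Bowen property is not really substantiated --- is superfluous but harmless.
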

			\begin{proof}
				Using the Gibbs property for $\GGG^M$ of $\mu$ from \cite[Lemma 4.16]{CT}, Lemma \ref{subsetprod}, and the fact that in our choice of metric $B_t((x,y),\rho)= B_t(x,\rho) \x B_t(y,\rho)$, we have
				\begin{align*}
				(\mu\x\mu)(B_t((x,y),\rho)) &= \mu(B_t(x,\rho))\mu(B_t(y,\rho))
				\\
				&\geq Q_M^2e^{-t2P(\ph) + \int_{0}^{t}\ph(f_sx)\,ds + \int_0^t\ph(f_sy)\,ds}
				\\
				&= Q_M^2e^{-tP(\Phi) + \int_{0}^{t}\Phi(f_sx,f_sy)\,ds} \qedhere
				\end{align*}
			\end{proof}

 Let $\nu$ be an ergodic equilibrium state such that $\nu\perp(\mu\x\mu)$. For all $t$, let $E_t$ be a maximizing $(t,\gamma)$-separated set for $\Lambda_t(T^1M \x T^1M, \Phi, \gamma)$, and let $\s{A}_t$ be an adapted partition for $E_t$, where $\gamma < \frac{\eps}{4}$ is chosen small enough to apply Lemma \ref{CombinedLemma}. Let $P$ be a set which satisfies $(g_{r_1}\x g_{r_2})P = P$ for all $r_1, r_2\in\R$,  $\nu(P) = 1$ and $(\mu\x\mu)(P)=0$.  Such sets exist. We show that the set $P$ defined to be the complement of  the set of generic points $G$ for $\mu\x\mu$ is such a set. By ergodicity, $(\mu\x\mu)(G) = 1$. Since $(\mu\x\mu)\perp\nu$, it follows that $\nu(G) = 0$. It remains to show that $(g_{r_1} \x g_{r_2})G=G$ for an arbitrary $r_1,r_2\in\R$. Let $(x,y)\in G$. Then, for any continuous function $\Psi : T^1M\x T^1M \to\R$, we have $\Psi((g_s \x g_s)(g_{r_1}x, g_{r_2}y)) = \Psi \circ (g_{r_1} \x g_{r_2}) (g_s x, g_s y)$ for all $s$. Since $(x,y) \in G$ and $\Psi \circ (g_{r_1} \x g_{r_2})$ is continuous, we see that 
 
	$$\lim\limits_{t\to\infty}\frac{1}{t}\int_{0}^{t}\Psi((g_s \x g_s)(g_{r_1}x, g_{r_2}y))\,ds = \int \Psi\circ(g_{r_1}\x g_{r_2})\,d(\mu\x\mu), $$
and by invariance of $\mu$, $\int \Psi\circ(g_{r_1}\x g_{r_2})\,d(\mu\x\mu)= \int\Psi\,d(\mu\x\mu).$ Hence, $(g_{r_1}x, g_{r_2}y) \in G$. Thus, $(g_{r_1} \x g_{r_2})G=G$ for all $r_1, r_2\in\R$.

	Since $\nu$ and $\mu\x\mu$ are equilibrium states, so is the measure $\frac{1}{2} \nu + \frac{1}{2}(\mu \x \mu)$. This measure is product expansive by Proposition \ref{prodexp}, so by Proposition \ref{Approximation} we can find  $U_t\subset \s{A}_t$ such that $\frac{1}{2}(\nu + (\mu \x \mu))(U_t\vartriangle P) \to 0$. In particular, we have $\nu(U_t) \to 1$ and $(\mu\x\mu)(U_t) \to 0$.

 We are now in a position to follow the proof of \S4.7 of \cite{CT}.  We know that $\mu \times \mu$ has the Gibbs property on $\tilde\GGG^M$. 
Now, taking $U_t$ as above and assuming without loss of generality that $\inf \nu(U_t) > 0$, by Lemma \ref{CombinedLemma}, for all sufficiently large $t$, we have that
$$\Lambda_t(\CCC\cap \tilde{\GGG}^M, \Phi, \gamma) \geq Ce^{tP(\Phi)}$$
for some $C$, where $\CCC = \{((x,y),t) \mid  x\in E_t\cap U_t\}$. Consequently, we have that
$$\sum_{(x,y)\in (\CCC\cap \tilde{\GGG}^M)_t} e^{\int_{0}^{t}\Phi(g_sx,g_sy)\,ds} \geq \Lambda_t(\CCC\cap\tilde{\GGG}^M, \Phi, \gamma)\geq  Ce^{tP(\Phi)}.$$
Observe that $B_t((x,y),\gamma/2)\subset U_t$ for all $(x,y)\in E_t\cap U_t$ because $\s{A}_t$ is adapted to $E_t$. Therefore, appealing to the Gibbs property shown in Lemma \ref{Gibbs}, we have that
$$(\mu\x\mu)(U_t)\geq \sum_{(x,y)\in (\CCC\cap \tilde{\GGG}^M)_t}Q_M^2e^{-tP(\Phi) + \int_{0}^{t}\Phi(g_sx,g_sy)\,ds}\geq Q_M^2C > 0.$$
However, this contradicts the fact that $(\mu\x\mu)(U_t)\to 0$. Thus, $\nu\ll (\mu \x \mu)$. This completes our proof that $\mu\x\mu$ has no mutually singular equilibrium states. We already showed that $\mu\x\mu$ is ergodic. We conclude that $\mu \x \mu$ is the unique equilibrium state for $(T^1M\x T^1M,(g_t\x g_t), \Phi)$. Applying Theorem \ref{Ledr}, we conclude that $\mu$ has the $K$-property.

\subsection{A general statement on obtaining the $K$-property} \label{s.general}
The argument described in this paper is rather flexible, and will apply for systems other than rank one geodesic flow. We state formally the abstract statement that is immediately provided by the proof above.  
\begin{thm} \label{general}
	Let $(X,\FFF)$ be a continuous entropy expansive flow on a compact metric space, and $\ph : X\to [0, \infty)$ a continuous potential. Suppose that every equilibrium measure for $\ph$ is almost expansive and that every equilibrium measure for $\Phi$ is product expansive for $(X \x X, \FFF \x \FFF)$.  Suppose that $X\x [0, \infty)$ has a $\lambda$-decomposition $(\PPP,\GGG,\SSS)$ with the following properties:
	\begin{enumerate}
		\item $\GGG$ has specification at all scales;
		\item $\ph$ has the Bowen property on $\GGG$;
		\item $P(\PPP \cup\SSS,\ph) < P(\ph)$,
	\end{enumerate}
	and that the corresponding  $\tilde \lambda$-decomposition $(\tPPP,\tGGG,\tSSS)$ for  $(X \x X, \FFF \x \FFF)$ satisfies
	\begin{enumerate}\setcounter{enumi}{3}
	\item $P(\tPPP \cup \tSSS,\Phi) < P(\Phi) = 2 P(\ph)$.
	\end{enumerate}
	Then   $(X \x X,\FFF \x \FFF,\Phi)$ has a unique equilibrium state, and thus the unique equilibrium state for $(X,\FFF,\ph)$ has the Kolmogorov property.
\end{thm}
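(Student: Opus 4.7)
The plan is to follow the same template as the rank 1 geodesic flow argument, step by step, with the geometric ingredients replaced by the abstract hypotheses. First I would apply Theorem \ref{CT} to $(X, \FFF, \ph)$: hypotheses (1)--(3) supply the decomposition $(\PPP, \GGG, \SSS)$ with specification, the Bowen property, and the pressure gap on $\PPP \cup \SSS$ (passing to bracketed versions via Lemma \ref{BracketRem} since the decomposition is a $\lambda$-decomposition), while the almost expansivity assumption on equilibrium measures for $\ph$, together with entropy expansivity, gives the bound $P^\perp_{\mathrm{exp}}(\ph) < P(\ph)$ needed in Theorem \ref{CT}. This produces a unique equilibrium state $\mu$ for $\ph$, and by Lemma \ref{ProductES} the product measure $\mu \x \mu$ is an equilibrium state for $\Phi$. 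The aim is to upgrade this to uniqueness for $\Phi$, after which Proposition \ref{Kprop} delivers the Kolmogorov property.

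Next I would assemble the analogous structure on the product system $(X \x X, \FFF \x \FFF, \Phi)$, for which Theorem \ref{CT} cannot be applied directly since $P^\perp_{\mathrm{exp}}(\Phi) = P(\Phi)$ always holds. Corollary \ref{specbowlift} lifts specification and the Bowen property from $\GGG$ to $\tGGG$, hypothesis (4) gives the required pressure gap for $\tPPP \cup \tSSS$ (again with Lemma \ref{BracketRem} handling the bracketed version), and entropy expansivity of $\FFF \x \FFF$ (which inherits from $\FFF$) supplies upper semicontinuity of the entropy map. The product expansivity hypothesis on equilibrium measures for $\Phi$ is the key substitute for the failed expansivity condition; it is precisely what is needed so that the approximation machinery of Proposition \ref{Approximation} and the entropy bound of Corollary \ref{ScaleEntropy} apply to equilibrium states for $\Phi$.

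Ergodicity of $\mu \x \mu$ for $\FFF \x \FFF$ is equivalent to weak mixing of $\mu$, which I would prove via the spectral argument of Theorem \ref{Weak Mixing}: assuming a nontrivial eigenvalue $\theta$ with eigenfunction $\ph$, produce a set of the form $\ph^{-1}(B)$ whose small flow-out is disjoint from its own $\FFF$-translate by $2\pi n / \theta + t_0$ for every $n \in \ZZ$, and contradict this with the light mixing estimate $\mu(f_{[-3s,3s]}A \cap f_t f_{[-3s,3s]}B) > 0$ for all large $t$. This light mixing estimate is the heart of the argument: it combines the Gibbs property on $\GGG^M$ (Lemma \ref{JointGibbs}), the approximation Proposition \ref{InvariantApprox} for almost expansive measures, and the counting Lemma \ref{CombinedLemma}, all of which hold for $\mu$ under the abstract hypotheses.

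Finally, I would rule out ergodic equilibrium states $\nu$ for $\Phi$ with $\nu \perp \mu \x \mu$. Take $P$ to be the complement of the set $G$ of $\mu \x \mu$-generic points under the diagonal flow; since $\mu$ is $\FFF$-invariant, $G$ is invariant under the full $\RR^2$-action $(f_s \x f_t)_{s,t \in \RR}$, so Proposition \ref{Approximation} applied to the product expansive measure $\tfrac12(\nu + \mu \x \mu)$ yields $U_t \subset \s{A}_t$ with $\nu(U_t) \to 1$ and $(\mu \x \mu)(U_t) \to 0$. Lemma \ref{subsetprod} gives $\tGGG^M \subset \GGG^M \x \GGG^M$, so Lemma \ref{Gibbs} provides the Gibbs property for $\mu \x \mu$ on $\tGGG^M$; combining this with Lemma \ref{CombinedLemma} applied in $(X \x X, \FFF \x \FFF, \Phi)$ forces a uniform lower bound $(\mu \x \mu)(U_t) \geq Q_M^2 C > 0$, contradicting $(\mu \x \mu)(U_t) \to 0$. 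Hence $\mu \x \mu$ is the unique equilibrium state for $\Phi$, and Proposition \ref{Kprop} concludes. The main obstacle is that uniqueness on the product system cannot be delegated to Theorem \ref{CT} as a black box and must be rerun by hand, with product expansivity carefully replacing almost expansivity in every approximation and counting step; verifying that the light mixing estimate and the no-mutually-singular argument both survive this substitution is the technical core of the proof.
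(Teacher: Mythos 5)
Your proposal is correct and follows essentially the same route as the paper: apply the Climenhaga--Thompson machinery to the base system, lift specification, the Bowen property and the pressure gap to the $\tilde\lambda$-decomposition on the product, prove weak mixing of $\mu$ via the light mixing estimate (Lemma \ref{JointGibbs}, Proposition \ref{InvariantApprox}, Lemma \ref{CombinedLemma}), rule out mutually singular equilibrium states for $\Phi$ using Proposition \ref{Approximation} with the $\RR^2$-invariant set of generic points and the Gibbs property on $\tGGG^M$, and conclude via Proposition \ref{Kprop}. The only gloss to be careful with is your parenthetical claim that almost expansivity of all equilibrium measures for $\ph$ yields $P_{\mathrm{exp}}^\perp(\ph)<P(\ph)$ --- that implication is not automatic, and what is actually used (as in the paper) is that the equilibrium measures themselves are almost expansive, so the adapted-partition entropy, approximation and counting lemmas apply to them directly.
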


We also note that our argument in \S \ref{lambdaestimate} shows that the pressure hypotheses (3) and (4) in the general result above hold if we can verify that
\[
\sup \{ P_\mu(\ph) : \int  \lambda \,d \mu=0\} < P(\ph) \text{ and } 
\sup \{ P_\nu(\Phi) : \int \tilde \lambda \,d \nu=0\} < P(\Phi) = 2 P(\ph).
\]

There is room for improvement in the hypotheses of Theorem \ref{general}. For example, the expansivity conditions could be replaced with a condition on the `pressure of obstructions to product expansivity' in the same spirit as \cite{CT}. Also, we expect that some of the hypotheses stated above can be shown (with more work) to be redundant; for example, one would like to argue that the pressure estimates (3) and (4) can be combined in general. We do not pursue these arguments here since they may distract from the main ideas needed for our approach. We expect to address an `optimal' general statement and explore further applications in future work.

We remark that Theorem \ref{general} applies in the case that $(X, \FFF)$ is a topologically mixing hyperbolic flow and $\varphi$ is a H\"older potential by taking $\GGG$ to be every orbit segment, and $\PPP, \SSS$ to be trivial. Specification and the Bowen property are satisfied globally, see \cite[\S7.3]{FH} for a convenient reference. Expansivity of the flow can easily be seen to guarantee the hypotheses on almost expansivity and product expansivity. We emphasize that classical hypotheses for uniqueness of equilibrium states for flows, e.g. those in \cite{eF77}, do not apply directly to $(X \x X,\FFF \x \FFF)$ since this product flow is not expansive.

\section{Bernoullicity of the Knieper-Bowen-Margulis measure} \label{s.bernoulli}
We recall results from the literature which allow us to conclude that  the $K$-property implies the Bernoulli property for the Knieper-Bowen-Margulis measure of maximal entropy $\mukbm$. Thus, we obtain the Bernoulli property for $\mukbm$. The argument for moving from $K$ to Bernoulli relies heavily on the foliation structure coming from non-uniform hyperbolicity of the system. Thus, this argument does not retain the level of generality of our arguments for the $K$-property.

\subsection{From K to Bernoulli} In the classic argument for the Bernoulli property by Ornstein and Weiss  \cite{OW73}, they first show the $K$-property. Then they argue that in their setting, which in \cite{OW73} was the geodesic flow on a constant negative curvature surface, the $K$-property implies the existence of a refining sequence of Very Weak Bernoulli partitions, which in turn implies Bernoulli. This argument was extended to equilibrium states for Anosov flows by Ratner \cite{mR74}. This has become the primary approach to proving the Bernoulli property in smooth dynamics, and was generalized by Pesin to non-uniformly hyperbolic flows \cite{yP77}. This strategy was also carried out by Chernov and Haskell for suspension flows over some non-uniformly hyperbolic maps with singularities \cite{CH96}, by Ledrappier, Lima, and Sarig for 3-dimensional smooth flows using countable state symbolic dynamics \cite{LLS}, and by Baladi and Demers for certain billiard flows \cite{BalDe}. We follow the account of \cite{CH96}, since it particularly emphasizes the details necessary for the flow case. We note that the arguments we need also appear in recent work by \cite{PTV} and elsewhere.

Most results in the literature are stated for a smooth measure or SRB measure, however it is widely accepted that what is really needed is product structure for the measure on rectangles. This is made clear in the account by Chernov and Haskell \cite{CH96}. Their results are stated for suspension flows over a non-uniformly hyperbolic map with a smooth measure, but their argument applies more generally. We explain how to extract a much more general statement from their write-up.

We claim that   `K implies Bernoulli' holds for a $C^2$ flow on a manifold equipped with a hyperbolic invariant measure $\mu$ if there exists an $\eps$-regular covering for $\mu$ for any $\eps>0$, where $\eps$-regular coverings are defined below. Section 5 of \cite{CH96} is devoted to showing that $\eps$-regular coverings exist for any $\eps>0$ when the measure $\mu$ is smooth. Section 6 of \cite{CH96} proves that if a measure $\mu$ is $K$ and there exists an $\eps$-regular covering with non-atomic conditionals for $\mu$ for any $\eps>0$, then any finite partition $\xi$ of the phase space with piecewise smooth boundary and a constant $C > 0$ such that $\mu(B(\partial \xi,\delta)) \leq C\delta$ for all $\delta > 0$ is Very Weak Bernoulli. A refining sequence of such partitions with diameter going to $0$ suffices to conclude the Bernoulli property for $\mu$. Such a sequence of partitions exist in this setting by \cite[Lemma 4.1]{OW98}. Thus, to conclude that $\mukbm$ is Bernoulli, we only need to show that $\eps$-regular coverings for $\mukbm$ exist for all $\eps>0$.

We recall that a \emph{rectangle} $R$ is a measurable set (which we can equip with a distinguished point $z \in R$) such that for all $x,y \in R$ the local weak stable manifold $W_x^{0s}$ and the local unstable $W^{u}_y$ intersect in a single point which lies in $R$. A rectangle $R\ni z$ is identified as the Cartesian product of $W^{u}_z\cap R$ with $W^{0s}_z\cap R$, and there is a natural product measure $\mu_R^p = \mu_z^{u}\x\mu_z^{0s}$, where $\mu_z^{u}$ is the conditional measure induced by $\mu$ on $W^{u}_z\cap R$, and $\mu_z^{0s}$ is the corresponding factor measure on $W^{0s}_z$. We give Chernov and Haskell's definition of $\eps$-regular covering here.
\begin{defn} \label{regcover}
Given any $\eps > 0$, we define an \emph{$\eps$-regular covering for $\mu$} of the phase space $M$ to be a finite collection of disjoint rectangles $\RRR = \RRR_\eps$ such that
\begin{enumerate}
	\item $\mu(\bigcup_{R\in\RRR} R) > 1 - \eps$
	\item Given any two points $x,y\in R\in\RRR$, which lie in the same unstable or weakly stable manifold, there is a smooth curve on that manifold which connects $x$ and $y$ and has length less than $100\cdot\operatorname{diam} R$
	\item For every $R\in\RRR$, with distinguished point $z \in R$, the product measure $\mu_R^p = \mu_z^{u}\x\mu_z^{0s}$ satisfies $\abs{\mu_R^p(R)/\mu(R) - 1} < \eps$. Moreover, $R$ contains a subset $G$ with $\mu(G) > (1-\eps)\mu(R)$ such that for all $x \in G$, $\abs{(d \mu_R^p/ d\mu)(x) - 1} < \eps.$
\end{enumerate}
\end{defn}

\subsection{Constructing an $\eps$-regular covering for $\mukbm$}
The measure $\mukbm$ is hyperbolic because  $\mukbm(\Reg)=1$, see \cite[Corollary 3.7]{BCFT}. Let $\eps > 0$. By \cite[Lemma 8.3]{yP77} and \cite[Lemma 1.8]{Pes77}, for any hyperbolic measure $\mu$, we can find a finite collection of disjoint rectangles $R$ covering a Pesin set for the Lyapunov regular points for $\mu$. Applying this to $\mukbm$ and a choice of Pesin set with measure at least $1-\eps$ gives the first condition.  The rectangles $R$ can be chosen with maximum diameter as small as we like. The second condition is immediate since the leaf metrics are uniformly equivalent to the Riemannian distance on small leaves. 

This leaves only the third condition to check for rectangles with sufficiently small diameter. We recall Knieper's construction of $\mukbm$ from \cite{knieper98} which gives us the product structure we need. Writing $\tilde{M}$ for the universal cover of $M$ and $\tilde{M}(\infty)$ for the boundary at infinity, let $\nu_p$ be a non-atomic measure on $\tilde{M}(\infty)$ described in \cite{knieper98}. Let $\VVV$ be the set of $(\xi,\eta) \in \tilde{M}(\infty) \x \tilde M(\infty)$ such that there exists a geodesic $\gamma$ with $\gamma(-\infty) = \xi, \gamma(\infty) = \eta$,  and let 
$$
P^{-1}(\xi,\eta) = \{\text{geodesics }\gamma\mid \gamma(-\infty) = \xi,\,\gamma(\infty) = \eta\}.
$$
Knieper defines the measure $\mukbm$ by setting for $A\subset T^1\tilde{M}$,
$$\mukbm(A) = \int_{\VVV}\operatorname{Vol}(\pi(P^{-1}(\xi,\eta)\cap A))f(\xi,\eta)\,d\nu_p(\xi)\,d\nu_p(\eta)$$
where $f(\xi,\eta) = e^{-h(b_p(q,\xi) + b_p(q,\eta))}$ with $q$ any point on the geodesic connecting $\xi$ and $\eta$, $p\in \tilde{M}$, and $b_p(q,\xi)$ is a Busemann function. The definition of $f$ is independent of the choice of $q$, and we know that $f$ is continuous by \cite[Chapter II]{wB95}. The measure is shown to be equivariant under the fundamental group, and thus descends to a measure on $T^1M$. Knieper shows that this characterization of $\mukbm$ defines the unique measure of maximal entropy.

By the flat strip theorem, if $P^{-1}(\xi,\eta)$ contains a regular geodesic, then this is the only geodesic in $P^{-1}(\xi,\eta)$, which we can write as $\gamma(\xi, \eta)$. Because $\mukbm(\Reg) = 1$, then the integrand $\operatorname{Vol}(\pi(P^{-1}(\xi,\eta)\cap A))$ just becomes the Lebesgue measure along $\gamma(\xi, \eta)$ of the set $A$ for $(\nu_p \x \nu_p)$-almost every $(\xi, \eta) \in \VVV$. We see that $d\mukbm=  f(\xi,\eta)d\nu_p\x d\nu_p \x dt$. In the terminology of \cite{mB02},  the measure $f(\xi,\eta)d\nu_p\x d\nu_p$ is a geodesic current with the quasi-product property. It follows that $\mukbm$ is a product measure on the unstable and weak stable manifolds, because there is a natural identification of stable and unstable manifolds of $v$ with subsets of $\tilde{M}(\infty)$.

Now we will show that our rectangles satisfy condition (3) in the definition of an $\eps$-regular cover. Let $R$ be a rectangle of sufficiently small diameter. Since stable and unstable manifolds at $v$ intersect transversally if and only $v \in \Reg$, it follows that if a rectangle $R$ is well-defined, then $R \subset \Reg$. For $z\in R$, let $(\xi_z,\eta_z)$ be the corresponding element of $\VVV$. The conditional measure $\mu_z^u$ on $R\cap W^u(z)$ is given by $d\mu_z^u(\eta) = \frac{f(\xi_z,\eta)\,d\nu_p(\eta)}{\int_{W^u(z)\cap R}f(\xi_z,\eta)\,d\nu_p(\eta)}.$ Since $f$ is continuous, by taking $R$ with a small enough diameter, we have that 
$\abs{d\mu_z^u / d\mu_w^u - 1} \leq \eps$ for $z, w\in R$.

This is sufficient to show condition (3) of an $\eps$-regular covering by integrating this derivative and appealing to the definition of conditional measures. This shows the existence of an $\eps$-regular covering. We conclude that $\mukbm$ is Bernoulli.

\subsection{The power of product structure}  \label{pp}
Product structure for measures is an extremely powerful tool in ergodic theory. The product structure described above is what Babillot used to obtain mixing for $\mukbm$ \cite{mB02}.  For non-uniformly hyperbolic maps with a smooth measure, it is shown by Pesin-Katok-Strelcyn theory \cite{KS86, yP77} that an ergodic component decomposes into a finite union of subcomponents of equal measure which are cyclically permuted by the map, and the corresponding iterate of the map is $K$ on each component. Thus, mixing implies Bernoulli in that setting. As noted in \cite{CH96}, it is widely believed that flow versions of this statement hold. It is also expected that this part of the theory goes through with a product structure assumption on a hyperbolic measure in place of a smoothness assumption. The paper \cite{OW98}, while focused on the SRB measure, makes this strategy clear. However, that paper does not contain convenient statements to reference, and is more focused on the big picture rather than full details, particularly in the flow case.  Since mixing for $\mukbm$ was proved by Babillot, there are no rotation factors, so it is likely that the approach discussed above would give the Bernoulli property for $\mukbm$ without the need for the novel arguments for the $K$-property which are presented in this paper. We stress that the necessary details (or even precise statements) of this approach are not written, and a full account will require elucidating a number of non-trivial technical details. We hope this will be rectified in the future. The current paper is to the best of our knowledge the first time that the Bernoulli property, or even the $K$-property, for $\mukbm$ has been claimed in the literature.

 We note that the symbolic dynamics recently obtained by Araujo-Lima-Poletti \cite{ALP} provides product structure at the symbolic level for the equilibrium states considered in this paper. Product structure for the symbolic lifts is sufficient to improve $K$ to Bernoulli. This argument is detailed in \cite[Corollary 1.3]{ALP}. We note that their symbolic construction is highly involved, and was not available in higher dimensions when our preprint first appeared. A geometric construction of product structure for the equilibrium states considered in this paper, for example extending the Knieper construction, is not currently known.

A methodological advantage of our approach to the $K$-property is that we do not use arguments based on product structure. We expect this will be an advantage of the approach developed here in settings where obtaining product structure is difficult or does not make sense.

\subsection*{Acknowledgments} We would like to thank Omri Sarig, Fran\c{c}ois Ledrappier, Yves Coud\`ene and Ali Tahzibi for helpful conversations. We would also like to thank the anonymous referee for many helpful comments.

	\bibliographystyle{amsplain}
	\bibliography{Kpropertyreferences}

\end{document}